\theoremstyle{plain}
\newtheorem{thm}{Theorem}[section]
\newtheorem{cor}{Corollary}[section]
\newtheorem{lem}{Lemma}[section]
\newtheorem{prop}{Proposition}[section]
\theoremstyle{definition}
\newtheorem{defn}{Definition}[section]
\theoremstyle{remark}
\newtheorem{rem}{Remark}[section]
\title{Local scaling asymptotics for the Gutzwiller trace formula
in Berezin-T\"{o}plitz quantization}
\author{Roberto Paoletti }
\date{}
\begin{document}

\maketitle

\noindent{\bf Address:}
Dipartimento di Matematica e Applicazioni, Universit\`a degli Studi
di Milano Bicocca, Via R. Cozzi 55, 20125 Milano,
Italy; 

\noindent
\textbf{e-mail:}  roberto.paoletti@unimib.it 
\begin{abstract}
Under certain hypothesis on the underlying classical Hamiltonian flow,
we produce local scaling asymptotics in the semiclassical regime for a Berezin-T\"{o}plitz version of the Gutzwiller
trace formula on a quantizable compact K\"{a}hler manifold, in the spirit of the near-diagonal scaling asymptotics 
of Szeg\"{o} and T\"{o}plitz kernels.
More precisely, we consider an analogue of the \lq Gutzwiller-T\"{o}plitz kernel\rq\, 
previously introduced in this setting by Borthwick, Paul and Uribe, and study how it
asymptotically concentrates along the appropriate classical loci
defined by the dynamics, with an explicit description of the exponential decay along normal directions.
These local scaling asymptotics probe into the concentration behavior of the eigenfunctions
of the quantized Hamiltonian flow. When globally integrated, they yield the analogue of the Gutzwiller trace formula. 
\end{abstract}

\section{Introduction}

Let $(M,J)$ be a connected and compact complex manifold, of complex dimension $d$. 
Suppose that there exists on $M$ a positive Hermitian holomorphic line bundle
$(A,h)$, and let $\Theta$ be the curvature form of its unique compatible covariant derivative $\nabla$.
Then $\omega=:(i/2)\,\Theta$ is a K\"{a}hler form on $(M,J)$, and
$\mathrm{d}V_M=\omega^{\wedge d}/d!$ is a volume form on $M$.
We shall denote by $g$ the Riemannian structure associated to $\omega$ and $J$.

This paper is concerned with the semiclassical aspects of the Berezin-T\"{o}plitz
quantization of the K\"{a}hler manifold $(M,J,2\omega)$, a theme which has attracted 
great attention in recent years 
(see for instance, of course with no pretense of completeness whatsoever, 
\cite{ber}, \cite{cgr}, \cite{bms}, \cite{bdm-g}, \cite{bpu1}, \cite{bpu2}, \cite{z1}, \cite{ks}, \cite{ch1},
\cite{mz}, 
\cite{mm1}, \cite{mm2}, \cite{bms}, \cite{s}, \cite{g-star}). As we point out at the end of this introduction,
the present results generalize to the almost complex setting, where one considers a quantizable compact symplectic
manifold $(M,\omega)$, endowed with a compatible almost complex structure $J$. 

In \cite{bpu2}, in particular, an analogue of the Gutzwiller trace formula was investigated in
the Berezin-T\"{o}plitz setting. Roughly speaking, this formula deals with the asymptotics of
the trace of the distributional kernel of a smoothed spectral projector relative to a spectral band
of width $O(\hbar)$ about an energy value $E$.   
The basic motivation for the present work
is to revisit the general theme of \cite{bpu2} in light of the approach to scaling asymptotics
in geometric quantization 
more recently emerged in the series of papers
\cite{z2}, \cite{bsz1}, \cite{sz}, \cite{bsz2} and built on the microlocal theory of the Szeg\"{o} kernel in \cite{bs}.

In order to clarify the scope of the present analysis, let us recall a Definition:

\begin{defn}
 \label{defn:compatible}
 A function $f\in \mathcal{C}^\infty(M,\mathbb{R})$ will be called \textit{compatible} (with the K\"{a}hler structure)
 if the Hamiltonian flow associated to it, $\phi^M_\tau:M\rightarrow M$ is a 1-parameter group of holomorphic
 automorphisms of $(M,J)$. 
\end{defn}

These functions are also called \textit{quantizable} in the literature \cite{cgr}. Compatible functions
are of course quite special, but play a very important role in complex geometry and geometric quantization, as they are
tightly related to Hamiltonian group actions preserving the quantization set-up. 

Let then $f$ be a compatible function on $M$, and consider its associated Hamiltonian vector field $\upsilon_f$ on 
$(M,2\omega)$. It generates the flow $\phi^M_\tau:M\rightarrow M$, which heuristically represents the classical evolution
of the system with Hamiltonian $f$.

On the quantum side of the picture, we replace $(M,J,\omega)$ with the spaces of global holomorphic sections 
$H^0\left(M,A^{\otimes k}\right)$ of higher tensor powers of $A$. Given the volume form $\mathrm{d}V_M$ and the Hermitian metric
$h$, we may consider the Hilbert space $L^2\left(M,A^{\otimes k}\right)$ of square summable sections of $A^{\otimes k}$, and
$H^0\left(M,A^{\otimes k}\right)$ sits in it as a closed subspace. In geometric quantization, one views 
$H^0\left(M,A^{\otimes k}\right)$ as the quantum Hilbert space associated to $(M,J,2\omega)$, at Planck's constant $\hbar=1/k$, 
$k=1,2,\ldots$. The semiclassical regime corresponds to studying the asymptotic aspects of this picture for $k\rightarrow+\infty$.

The quantum counterpart of the observable $f$ is often taken to be the self-adjoint Berezin-T\"{o}plitz operator 
\begin{equation}
 \label{eqn:usual toeplitz}
 T_f'^{(k)}:s\in H^0\left(M,A^{\otimes k}\right)\mapsto P_k(f\,s)\in  H^0\left(M,A^{\otimes k}\right),
\end{equation}
where $P_k:L^2\left(M,A^{\otimes k}\right)\rightarrow H^0\left(M,A^{\otimes k}\right)$ denotes the orthogonal projector.
This is a (zeroth-order) Berezin-T\"{o}plitz operator 
(see Definition \ref{defn:toplitz} below), 
and its eigenvalues $\eta_{kj}$ satisfy the elementary bound
$a_f\le \eta_{kj}\le A_f$, where $a_f=:\min (f)$ and $A_f=:\max(f)$ (see e.g. Lemma 2.1 of \cite{pao_imrn}). 

When $f$ is compatible,
however, there is a more natural choice for the quantization of $f$.
Before explaining this, 
let us reformulate the picture in the frame of Hardy spaces and Szeg\"{o} kernels, following the general approach in
\cite{bdm-g}, \cite{bpu1}, \cite{bpu2}, \cite{z2}, \cite{bsz2}, \cite{sz}, \cite{bsz2}.

Let $ A^\vee\supseteq X\stackrel{\pi}{\rightarrow }M$ be the unit circle bundle in the dual line bundle 
(with the naturally induced Hermitian structure).
The covariant derivative $\nabla$ corresponds on $X$ to a connection 1-form $\alpha$, satisfying $\mathrm{d}\alpha=2\,\pi^*(\omega)$.
Thus $(X,\alpha)$ is a contact manifold, and $\mathrm{d}V_X(x)=:(\alpha/2\pi)\wedge \pi^*(\mathrm{d}V_M)$
is a volume form on $X$. The horizontal tangent bundle 
$\mathcal{H}=:\ker(\alpha)\subseteq TX$, with the pull-back to it of the complex structure $J$,
is a CR structure on $X$. 

The corresponding Hardy space $H(X)\subseteq L^2(X)$ then splits equivariantly and unitarily under the $S^1$-action as an
orthogonal direct sum of isotypical components,
$$
H(X)=\bigoplus_{k\ge 0}H(X)_k.
$$
There is natural unitary isomorphism $H(X)_k\cong H^0\left(M,A^{\otimes k}\right)$
for every $k$.

The orthogonal projector
$
\Pi:L^2(X)\rightarrow H(X)$ is called the \textit{Szeg\"{o} projector} on $X$; its distributional kernel 
$\Pi\in \mathcal{D}'(X\times X)$ is the \textit{Szeg\"{o} kernel} of $X$.
If $(e_{kj})_j$ is any orthonormal basis of $H(X)_k$, then $\Pi=\sum_k\Pi_k$, where for each $k$
\begin{equation}
 \label{eqn:spectral szego}
\Pi_k(x,y)=\sum_{j=1}^{N_k} e_{kj}(x)\cdot \overline{e_{kj}(y)}\,\,\,\,\,\,\,(x,y\in X);
\end{equation}
$\Pi_k$ is the $\mathcal{C}^\infty$ $k$-th equivariant Szeg\"{o} kernel\footnote{We shall generally use the same notation for an operator and its Schwartz
kernel.}, that is, the Schwartz kernel of the orthogonal
projector $\Pi_k:L^2(X)\rightarrow H(X)_k$.

Consider an Hamiltonian vector field $\upsilon_f$ on $M$, with $f$ not necessarily compatible; there is
a natural lift of $\upsilon_f$ to a contact vector field $\widetilde{\upsilon}_f$ on $X$, given by
\begin{equation}
 \label{eqn:defn of contact vf}
 \widetilde{\upsilon}_f=:\upsilon_f^\sharp-f\,\frac{\partial}{\partial \theta}.
\end{equation}
Here notation is as follows: $\upsilon_f^\sharp:x\mapsto \upsilon_f^\sharp(x)\in \mathcal{H}_x$ 
is the horizontal lift of $\upsilon_f$ to $X$ (with respect to $\alpha$),
and $\partial/\partial \theta$ is the generator of the standard $S^1$-action on $X$ (in 
local coordinates the latter action will be expressed by translation in an angular coordinate); in addition, we have
written $f$ for the pull-back $\pi^*(f)$.
While $\upsilon_f=\upsilon_{ f+c}$, with $c$ a real constant,
clearly $\widetilde{\upsilon}_f\neq \widetilde{\upsilon}_{f+c}$ unless $c=0$: 
the same Hamiltonian flow has many contact lifts to $X$.

Consider the 1-parameter group of contact transformations $\phi^X_\tau:X\rightarrow X$ generated by $\widetilde{\upsilon}_f$.  
Given that $f$ is compatible, $\phi^X_\tau$ preserves the CR structure and the Hardy space $H(X)$.
This corresponds to a 1-parameter group of holomorphic automorphism 
of $A^{\otimes k}$ for every $k$, preserving the Hermitian structure. 

We have
a 1-parameter group of unitary automorphisms of $H(X)_k$, induced by pull-back:
\begin{equation}
 \label{eqn:defn Utau}
 U(\tau)_k:H(X)_k\rightarrow H(X)_k,\,\,\,\,\,
 s\mapsto \left(\phi^X_{-\tau}\right)^*(s)=:s\circ \phi^X_{-\tau}.
\end{equation}

The latter is evidently the 1-parameter group generated by the self-adjoint operator 
\begin{equation}
 \label{eqn:1st order toeplitz operator}
T(f)_k=:\left.i\,\widetilde{\upsilon}_f\right|_{H(X)_k}:H(X)_k\rightarrow H(X)_k,
\end{equation}
where $\widetilde{\upsilon}_f$ is viewed as a differential operator.
In other words,
\begin{equation}
 \label{eqn:defn Utau1}
 U(\tau)_k=e^{i\tau\,T(f)_k}:H(X)_k\rightarrow H(X)_k.
\end{equation}

Thus, if $\lambda_{kj}$ are the eigenvalues of $T(f)_k$, repeated according to multiplicity,
the eigenvalues of $U(\tau)_k$ are given by
$e^{i\tau\,\lambda_{kj}}$. 

Let us recall the following definition from \cite{bdm-g}:

\begin{defn}
\label{defn:toplitz}
 A T\"{o}plitz operator of degree $k$ on $X$ is an operator of the form
 $T=\Pi\circ Q\circ \Pi$, where $Q$ is a pseudodifferential operator of degree $k$
 on $X$; its principal symbol $\sigma_T$ is the restriction of the principal symbol of $Q$
 to the closed symplectic cone sprayed by the connection 1-form,
 $$
 \Sigma=:\big\{(x,r\alpha_x)\,:\,x\in X,\,r>0\big\}\subseteq T^*X\setminus (0).
 $$
 Here $T^*X\setminus (0)$ is the complement of the zero section in the cotangent bundle of $X$.
 
\end{defn}

A T\"{o}plitz operator is commonly viewed as a 
 (generally unbounded) linear endomorphism of the Hardy space.
For instance, if $Q=-i\,\partial/\partial\theta$, then $T$ is the so-called \lq number operator\rq\,
$\mathcal{N}=:\bigoplus_{k\ge 0}k\cdot \mathrm{id}_{H(X)_k}$.

Therefore, the formally
self-adjoint operator $$T_f=:\bigoplus _{k\ge 0}T(f)_k:H(X)\rightarrow H(X)$$ is a first order T\"{o}plitz  
operator in the sense of \cite{bdm-g}, with
principal symbol 
$$
\sigma_{T(f)}(x,r\,\alpha_x)=r\,f\big(\pi(x)\big);
$$
one can see from this (see e.g. Corollary 2.1 of \cite{pao_imrn}) that
$$
a_f\,k+O(1)\le \lambda_{kj}\le A_f\,k+O(1).
$$

The $\mathcal{C}^\infty$ distributional kernels of 
$U(\tau)_k$ and $T(f)_k$ may be described in terms of spectral data, as follows.
Let $(e_{kj})_j$ be an orthonormal basis of $H(X)_k$, composed of eigenvectors of $T(f)_k$ 
relative to the $\lambda_{kj}$'s.
Then
\begin{equation}
 \label{eqn:spectral toeplitz}
T(f)_k(x,y)=\sum_{j=1}^{N_k} \lambda_{kj}\,e_{kj}(x)\cdot \overline{e_{kj}(y)},
\end{equation}
and
\begin{equation}
 \label{eqn:spectral unitary}
U(\tau)_k(x,y)=\sum_{j=1}^{N_k}e^{i\tau\,\lambda_{kj}}\,e_{kj}(x)\cdot \overline{e_{kj}(y)};
\end{equation}
here $N_k=h^0\left(M,A^{\otimes k}\right)=\dim H(X)_k$.

Let us now consider $\chi\in \mathcal{C}^\infty_0(\mathbb{R})$ and a real number 
$E\in \mathbb{R}$. Let 
$\mathcal{G}_k=\mathcal{G}_k^{(\chi)}:H(X)_k\rightarrow H(X)_k$
be the operator
with Schwartz kernel 
\begin{equation}
 \label{eqn:spectral gutzwiller}
\mathcal{G}_k(x,y)=:\sum_{j=1}^{N_k} \widehat{\chi}(kE-\lambda_{kj})\,e_{kj}(x)\cdot \overline{e_{kj}(y)};
\end{equation}
we shall call $\mathcal{G}_k$ a level-$k$
Gutwziller-T\"{o}plitz kernel. Although this will be mostly left implicit for notational simplicity, $\mathcal{G}_k$ 
depends on $f$, $\chi$ and $E$ (see \cite{bpu2}) (however, in the proof of Theorem \ref{thm:trace of G_k}
below we shall need to emphasize the dependence on $\chi$). 

Operators of this kind in the Berezin-T\"{o}plitz context were studied in \cite{bpu2}.
Heuristically,  $\mathcal{G}_k$ is a smoothed spectral projector, 
associated to a spectral band of $T(f)_k$ about $kE$, of width $O(1)$. If $T(f)$ is turned
into a zeroth-order T\"{o}plitz operator $T^\sharp(f)_k$ \label{pageTsharp} by composing it with a parametrix for the number operator,
we should speak of a spectral band about $E$, of width $O(1/k)$.


Theorem 1.1 of \cite{bpu2} deals with an operator of type 
(\ref{eqn:usual toeplitz}), in the Hardy space formulation,
and describes the asymptotics as $k\rightarrow +\infty$ of 
$\mathcal{G}_k(x,y)$ at fixed $x,y\in X$ ($f$ is not required to be compatible). 
One has $\mathcal{G}_k(x,y)=O\left(k^{-\infty}\right)$,
unless $m_x=:\pi(x),\,m_y=:\pi(y)\in M$ satisfy the following conditions:
\begin{enumerate}
 \item there exists $\tau\in \mathrm{supp}(\chi)$ such that $m_x=\phi^M_\tau(m_y)$;
 \item $f(m_x)=E$.
\end{enumerate}

Furthermore, let $\mathcal{R}_\chi (f,E)\subseteq M\times M$ \label{pageRchifE}
be the locus of points satisfying these two conditions, 
and denote by 
$\widetilde{\mathcal{R}}_\chi (f,E)$ 
its inverse image in $X\times X$.
Suppose $(x,y)\in  \widetilde{\mathcal{R}}(f,E)$ and that $f$ is submersive at $m_x$. Then 
$\mathcal{G}_k(x,y)$ is
described by an asymptotic expansion in descending integer powers of $k$, with leading order term of degree
$d-1/2$. The leading coefficient depends on Poincar\'{e} type data of the
Hamiltonian flow,
and the subprincipal symbol of an appropriate pseudodifferential operator. 

The arguments in \cite{bpu2} are based on the theory of Fourier-Hermite
distributions and their symbolic calculus in terms of symplectic spinors.

This \lq jumping behavior\rq\, in the asymptotics of
$\mathcal{G}_k(x,y)$ as $(m_x,m_y)$ moves away from the classically defined special locus
$\mathcal{R}_\chi (f,E)$ motivates studying the
asymptotic concentration of $\mathcal{G}_k$
in shrinking neighborhoods of 
$\widetilde{\mathcal{R}}(f,E)$.
In other words, one is naturally led to study the behavior of $\mathcal{G}_k$ not at fixed points,
but at sequences $(x_k,y_k)\rightarrow \widetilde{\mathcal{R}}(f,E)$ at appropriate rates.
We shall attack this problem working in rescaled loal coordinates at points $(x,y)\in \widetilde{\mathcal{R}}(f,E)$;
in doing so we shall build, rather than on Fourier-Hermite distributions, 
on the approach to the near-diagonal scaling asymptotics of level-$k$ Szeg\"{o} kernels 
developed in the series of papers \cite{z2}, \cite{bsz1}, \cite{sz}, \cite{bsz2}.

The present analysis is restricted to compatible Hamiltonians and to first order T\"{o}plitz operators
 of type (\ref{eqn:1st order toeplitz operator}). Compatible Hamiltonians are of course quite special, but nonetheless 
of exceptional importance in complex geometry and harmonic analysis, given their tight relation to holomorphic Hamiltonian
actions. 
In addition, the proofs, the geometric significance and the dynamical
interpretation of the scaling asymptotics are particularly transparent in this case,
and can thus serve as a guide for future developments.

In order to state our results, we first need to introduce
some notation.

\begin{defn}
 \label{defn:general periods}
 Let $\mu:G\times D\rightarrow D$, $(g,d)\mapsto \mu(g,d)=\mu_g(d)$, be an action of
 a group $G$ on a set $D$. For any $d\in D$, the set of \textit{periods} of $d$ in $G$
 is the stabilizer subgroup of $d$:
 $$
 \mathrm{Per}^G_D(d)=:\big\{g\in G\,:\,\mu(g,d)=d\big\}.
 $$
 If $(d,d')\in D\times D$, we shall set 
 $$
 \mathrm{Per}^G_D(d'\mapsto d)=:\big\{g\in G:\,\mu(g,d')=d\big\}=\big\{g\in G:\,d'=d_g\big\},
 $$
 where $d_g=:\mu_{g^{-1}}(d)$. 
\end{defn}


\begin{defn}
 \label{defn:periods subsets}
With the hypothesis and notation of Definition \ref{defn:general periods}, 
given a $G$-invariant subset $S\subseteq D$ let us set
$$
\mathrm{Per}^G_M(S)=:\bigcup_{d\in S}\mathrm{Per}^G_D(d).
$$
\end{defn}

In the present picture, we have the following built-in actions:
\begin{enumerate}
 \item the Hamiltonian action $\phi^M:\mathbb{R}\times M\rightarrow M$;
 \item the contact action $\phi^X:\mathbb{R}\times X\rightarrow X$;
 \item the action of 
$\mathbb{R}\times S^1$ on $X$ obtained by composing $\phi^X$
with the structure action $r:S^1\times X\rightarrow X$.
\end{enumerate}

If $m\in M$, let us set $X_m=:\pi^{-1}(m)\subseteq X$. If $(m,n)\in M\times M$ and $(x,y)\in X_m\times X_n$,
then there is a natural bijection 
$$
\mathrm{Per}^{\mathbb{R}}_M(n\mapsto m)\cong \mathrm{Per}^{\mathbb{R}\times S^1}_X(y\mapsto x),
\,\,\,\,\tau\mapsto (\tau,g_\tau),
$$
which for $n=m$ and $x=y$ is a group isomorphism (\S \ref{sctn:periods} below).

The scaling asymptotics of $\mathcal{G}_k$ at a point $(m,n)\in \mathcal{R}(f,E)$ with $\upsilon_f(m)\neq 0$
are controlled by a universal exponent, given by a quadratic function $\mathcal{Q}$ on pairs of tangent vectors
(see (\ref{eqn:defn of Q}) below).
The latter is defined in terms of a natural orthogonal decomposition of the tangent spaces
$T_mM$ and $T_nM$ of $M$ at $m$ and $n$, respectively.

Let us set 
\begin{equation}
\label{eqn:MEXE}
M_E=:f^{-1}(E)\subseteq M,\,\,\,\,\,\,\,\,\,\,X_E=:\pi^{-1}(M_E)\subseteq X.
\end{equation}

Suppose that $m\in M_E$, and that $f$ is submersive at $m$. 
The (Riemannian) normal space to $M_E$ at $m$
is 
\begin{equation*}
  N_m(M_E)=\mathrm{span}_{\mathbb{R}}\{ J_m\big(\upsilon_f(m)\big)\}.
\end{equation*}
Let 
\begin{equation}
 \label{eqn:RmSm}
 R_m=:\mathrm{span}_\mathbb{R}\big(\upsilon_f(m),\,J_m\upsilon_f(m)\big)=
\mathrm{span}_\mathbb{C}\big(\upsilon_f(m)\big),\,\,\,\,\,\,S_m=:R_m^\perp,
\end{equation}
where the orthocomplement in $T_mM$ may be taken equivalently in the Riemannian or in the Hermitian sense.

Then $\mathrm{span}_\mathbb{R}\big(\upsilon_f(m)\big)=T_mM_E\cap \mathrm{span}_\mathbb{C}\big(\upsilon_f(m)\big)$
and $T_mM_E$ splits as a Riemannian orthogonal direct sum 
$$
T_mM_E=\mathrm{span}_\mathbb{R}\big(\upsilon_f(m)\big)\oplus S_m.
$$

\begin{defn}
 \label{defn:tvh components}
 Let us now provide an orthocomplement direct sum decomposition of $T_mM$ which
 reflects the local geometry induced by the Hamiltonian flow. 
 Given the orthogonal direct sum decomposition
$$
T_mM=\mathrm{span}_\mathbb{R}\big(J_m\big(\upsilon_f(m) \big)
\oplus \mathrm{span}_\mathbb{R}\big(\upsilon_f(m)\big)\oplus S_m,
$$ 
at any $m\in M_E(\sigma)$ with $\mathrm{d}_mf\neq 0$,
we can uniquely write any $\mathbf{w}\in T_mM$ as 
$\mathbf{w}=\mathbf{w}_\mathrm{t}+\mathbf{w}_\mathrm{v}+\mathbf{w}_\mathrm{h}$, where
$\mathbf{w}_\mathrm{t}\in \mathrm{span}_\mathbb{R}\big(J_m\big(\upsilon_f(m) \big)$,
$\mathbf{w}_\mathrm{v}\in \mathrm{span}_\mathbb{R}\big(\upsilon_f(m)\big)$,
$\mathbf{w}_\mathrm{h}\in S_m$. 
Thus $\mathbf{w}_\mathrm{t}=b\,J_m\big(\upsilon_f(m)\big)$ and $\mathbf{w}_\mathrm{v}=a\,\upsilon_f(m)$
for certain $a,\,b\in \mathbb{R}$.
The suffix  $\mathrm{t}$ stands for \lq transverse\rq\, (to $M_E$),
$\mathrm{v}$ stands for \lq vertical\rq\, and $\mathrm{h}$
for \lq horizontal\rq\,.  
\end{defn}

Verticality and horizontality heuristically 
refer to the quotient fibration locally induced by
the orbits of the $\mathbb{R}$-action in $M_E$.

Let us now recall a definition from \cite{sz}.

\begin{defn}
\label{defn:psi2}
 Let $(V,h)$ be an Hermitian vector space, and write $g=\Re (h)$ and $\omega=-\Im(h)$
 (real and imaginary parts).
Let $\|\cdot\|$ be the induced norm, and  set 
$$
\psi_2(v_1,v_2)=:-i\,\omega(v_1,v_2)-\frac{1}{2}\,\|v_1-v_2\|^2\,\,\,\,\,\,\,\,(v_1,v_2\in V).
$$
\end{defn}

This is the universal exponent appearing in near-diagonal scaling asymptotics of level-$k$ Szeg\"{o} kernels
when the latter are expressed in Heisenberg local coordinates.

Let us now define $\mathcal{Q}:T_mM\times T_mM\rightarrow \mathbb{C}$ by
\begin{eqnarray}
 \label{eqn:defn of Q}
 \mathcal{Q}(\mathbf{w}_1,\mathbf{w}_2)&=:&\psi_2(\mathbf{w}_{1\mathrm{h}},\mathbf{w}_{2\mathrm{h}})\\
 &&+\Big[i\,\big(\omega(\mathbf{w}_{1\mathrm{v}},\mathbf{w}_{1\mathrm{t}})
 -\omega(\mathbf{w}_{2\mathrm{v}},\mathbf{w}_{2\mathrm{t}})\big)
 -\left(\|\mathbf{w}_{1\mathrm{t}}\|^2+\|\mathbf{w}_{2\mathrm{t}}\|^2\right)\Big].\nonumber
\end{eqnarray}


The scaling asymptotics of $\mathcal{G}_k$ below
will be expressed in Heisenberg local coordinates (HLC for short), which were introduced in \cite{sz}.
These local coordinates render the universal nature of the near-diagonal scaling asymptotics of
$\Pi_k$ especially transparent. We refer to \cite{sz} for a precise definition and discussion, and
to \S \ref{sctn:HLC} below a quick review. 

A Heisenberg local coordinate chart for $X$ centered at $x$,
$\gamma_x:(-\pi,\pi)\times B_{2d}(\mathbf{0},\delta)\rightarrow X'\subseteq X$, is often denoted in additive notation,
$\gamma_x(\theta,\mathbf{v})=x+(\theta,\mathbf{v})$; it satisfies the following properties:

\begin{enumerate}\label{pageHLC}
 \item Let $V_x=:\ker(\mathrm{d}_x\pi),\,H_x=:\ker(\alpha_x)$ be the
vertical and horizontal tangent spaces of $X$\footnote{these should not be confused with the \lq vertical\rq \,and \lq horizontal\rq\,
distributions defined above on $M$ along $M_E$}; then the differential 
$$\mathrm{d}_{(0,\mathbf{0})}\gamma_x:\mathbb{R}\oplus \mathbb{C}^d\rightarrow T_xX=V_x\oplus H_x$$ is unitary and preserves
the direct sum (in particular, the isomorphism $\mathbb{C}^d\rightarrow H_x$ is $\mathbb{C}$-linear).
\item The standard circle action on $X$ is expressed in HLC by translation in the angular coordinate, that is, whenever both terms 
are well-defined we have
$$
r_{e^{i\vartheta}}\big(x+(\theta,\mathbf{v})\big)=x+(\vartheta+\theta,\mathbf{v});
$$
we shall generally write $r_\vartheta$ for $r_{e^{i\vartheta}}$.
\item In particular, if $m=:\pi(x)$ then in additive notation
$$
\mathbf{v}\in B_{2d}(\mathbf{0},\delta)\mapsto m+\mathbf{v}=:\pi\Big(x+(\theta,\mathbf{v})\Big)\in M'=:\pi (X')\subseteq M
$$
is a well-defined coordinate chart on $M$ centered at $m$ (that is, it does not depend on $\vartheta$), and its
differential at $\mathbf{0}\in \mathbb{R}^{2d}$ 
determines a unitary isomorphism $\mathbb{C}^d\rightarrow T_mM$. This coordinate system is called \textit{preferred}, 
or \textit{adapted}.
\end{enumerate}
\noindent
We shall generally write $x+\mathbf{v}$ for $x+(0,\mathbf{v})$.

Lastly, let us give the following Definition:

\begin{defn}
 \label{defn:supported orbit}

If $m\in M$ and $\chi\in \mathcal{C}^\infty_0(\mathbb{R})$, we shall denote by $m^\chi$ 
the portion of the $\mathbb{R}$-orbit of $m$ determined by the support of $\chi$:
\begin{equation*}
 m^\chi=:\left\{m_\tau\,:\,\tau\in \mathrm{supp}(\chi)\right\}.
\end{equation*}

\end{defn}

The locus $\mathcal{R}_\chi(f,E)$ \label{pageRchifE1} (page \pageref{pageRchifE}) of pairs $(m,n)\in M\times M$ over which 
$\mathcal{G}_k$ asymptotically concentrates for $k\rightarrow +\infty$ is determined by
the conditions
$$
\mathrm{dist}_M\left(n,m^\chi\right)=0,\,\,\,\,\,f(m)-E=0,
$$
where $\mathrm{dist}_M$ is the distance function on $M$ associated to the K\"{a}hler metric.

We shall now formulate a quantitative estimate of the rate of concentration in terms of these two
quantities.

\begin{thm}
\label{thm:rapid decrease}
Let $f\in \mathcal{C}^\infty(M)$ be a compatible Hamiltonian and suppose
 $\chi\in \mathcal{C}^\infty_0(\mathbb{R})$.
Let $\mathcal{G}_k=\mathcal{G}^\chi_k\in \mathcal{C}^\infty(X\times X)$ be the level-$k$ Gutzwiller-T\"{o}plitz
kernel associated to the first order T\"{o}plitz operator (\ref{eqn:1st order toeplitz operator}).
For any $\epsilon,\,C>0$, we have 
$$
\mathcal{G}_k(x,y)=O\left (k^{-\infty}\right)
$$
uniformly for
$$
\max\left\{\mathrm{dist}_M\left(m_y,m_x^\chi\right),\,|f(m_x)-E|\right\}\ge C\,k^{\epsilon-\frac{1}{2}}.
$$
\end{thm}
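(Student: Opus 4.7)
The plan is to express $\mathcal{G}_k(x,y)$ as an oscillatory integral with large parameter $k$, identify its critical locus as $\widetilde{\mathcal{R}}_\chi(f,E)$, and then apply quantitative non-stationary phase to obtain the two rapid-decay estimates.

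First, Fourier inversion combined with (\ref{eqn:defn Utau1}) and (\ref{eqn:spectral gutzwiller}) yields
\begin{equation*}
\mathcal{G}_k(x,y) = \int_{\mathbb{R}} \chi(\tau)\, e^{-ikE\tau}\, U(\tau)_k(x,y)\, d\tau,
\end{equation*}
and, since $U(\tau)_k$ acts on $H(X)_k$ by pullback under $\phi^X$, its Schwartz kernel equals $U(\tau)_k(x,y) = \Pi_k\bigl(\phi^X_{-\tau}(x), y\bigr)$. I would then insert the Boutet de Monvel--Sj\"{o}strand parametrix for the Szeg\"{o} kernel, project onto the $k$-th isotype (introducing an integration in an angular variable $\theta \in (-\pi, \pi)$), and rescale the radial Fourier variable $t = ku$. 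This yields, modulo a smoothing remainder of size $O(k^{-\infty})$,
\begin{equation*}
\mathcal{G}_k(x,y) \equiv k \iiint \chi(\tau)\, e^{ik\Psi_{x,y}(\tau, \theta, u)}\, s\bigl(r_\theta \phi^X_{-\tau}(x), y, ku\bigr)\, d\tau\, d\theta\, du,
\end{equation*}
with phase $\Psi_{x,y}(\tau, \theta, u) = u\,\psi\bigl(r_\theta \phi^X_{-\tau}(x), y\bigr) - \theta - E\tau$.

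Next, I would identify the critical set of $\Psi_{x,y}$ on the real locus. The equation $\partial_u \Psi = \psi(r_\theta \phi^X_{-\tau}(x), y) = 0$, combined with the positivity $\Im \psi \geq 0$ (with equality only on the diagonal), forces $r_\theta \phi^X_{-\tau}(x) = y$. At such a diagonal point the fact that $\mathrm{d}_{x'}\psi(x', y)|_{x' = y}$ is a nonzero multiple of $i\alpha_y$, combined with $\widetilde{\upsilon}_f = \upsilon_f^\sharp - f\,\partial/\partial\theta$, $\alpha(\upsilon_f^\sharp) = 0$ and $\alpha(\partial/\partial\theta) = 1$, converts $\partial_\theta\Psi = 0$ and $\partial_\tau\Psi = 0$ into $u = 1$ and $f(m_x) = E$, respectively. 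Hence the critical set projects precisely to $\widetilde{\mathcal{R}}_\chi(f,E)$.

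For the quantitative rapid decay I would handle the two hypotheses separately. If $\mathrm{dist}_M(m_y, m_x^\chi) \geq Ck^{\epsilon - 1/2}$, then uniformly in $\tau \in \mathrm{supp}(\chi)$ we have $\mathrm{dist}_M\bigl(\pi(\phi^X_{-\tau}(x)), m_y\bigr) \geq Ck^{\epsilon - 1/2}$, and the standard off-diagonal rapid decay estimate for $\Pi_k$ (a consequence of the near-diagonal scaling asymptotics of \cite{sz}) gives $\Pi_k(\phi^X_{-\tau}(x), y) = O(k^{-\infty})$ uniformly; the $\tau$-integration preserves the bound. If instead $|f(m_x) - E| \geq Ck^{\epsilon - 1/2}$ while $\mathrm{dist}_M(m_y, m_x^\chi) < Ck^{\epsilon - 1/2}$ (else the previous case applies), I would apply Melin--Sj\"{o}strand complex-phase stationary phase in $(\theta, u)$ around the non-degenerate critical point $(\theta_0, 1)$ determined by $r_{\theta_0}\phi^X_{-\tau}(x) = y$, reducing to a one-dimensional oscillatory integral
\begin{equation*}
\mathcal{G}_k(x,y) = k^d \int_{\mathbb{R}} \chi(\tau)\, e^{ik\Phi_{x,y}(\tau)}\, a_k(\tau; x, y)\, d\tau + O(k^{-\infty}),
\end{equation*}
whose reduced phase has $\partial_\tau \Phi_{x,y}$ bounded below in modulus by a positive constant times $|f(m_x) - E| \geq Ck^{\epsilon - 1/2}$ throughout $\mathrm{supp}(\chi)$. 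Then $N$ integrations by parts in $\tau$ produce a factor of $k^{-N(1/2 - \epsilon)}$, yielding $O(k^{-\infty})$ on letting $N \to \infty$.

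The main technical obstacle will be carrying out the complex-phase stationary phase reduction in $(\theta, u)$ uniformly in $(\tau, x, y)$ and tracking the dependence of the reduced amplitude $a_k$ on these parameters well enough to preserve the lower bound on $\partial_\tau \Phi_{x,y}$ throughout the integration-by-parts iteration; the complex BdM phase $\psi$ makes this delicate, but the compatibility hypothesis on $f$ (which ensures $\phi^X$ commutes with the $S^1$-action and preserves the CR structure) keeps the structure of the reduction clean.
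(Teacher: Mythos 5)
Your overall strategy is sound and, in its first half, identical to the paper's: both start from the representation $\mathcal{G}_k(x,y)=\int\chi(\tau)\,e^{-ik\tau E}\,\Pi_k(x_\tau,y)\,\mathrm{d}\tau$ of (\ref{eqn:gutzwiller integral}) and dispose of the regime $\mathrm{dist}_M(m_y,m_x^\chi)\ge C\,k^{\epsilon-1/2}$ by the uniform off-diagonal decay of $\Pi_k$. Where you genuinely diverge is in the remaining regime: you propose to go back to the Boutet de Monvel--Sj\"ostrand parametrix \cite{bs}, project onto the $k$-th isotype and carry out a Melin--Sj\"ostrand stationary phase in $(\theta,u)$ \cite{MelSjo} before integrating by parts in $\tau$. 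The paper never redoes that reduction: it substitutes the already established near-diagonal scaling asymptotics of $\Pi_k$ in Heisenberg local coordinates (\S \ref{sctn:HLC}, from \cite{sz}), computes the $\tau$-dependence of the Heisenberg coordinates of $x_{k\tau}$ via Corollary 2.2 of \cite{pao_ijm_2012}, and obtains directly the phase $\Gamma(x_k,y_k,\tau)=\theta_k(\tau)+\tfrac{i}{2}\|\mathbf{v}_k(\tau)\|^2-\tau E$, so that the uniformity of the $(\theta,u)$ reduction --- exactly the point you flag as your main technical obstacle --- is imported rather than re-proved, and the integration by parts is then organized through explicit symbol estimates (Lemma \ref{lem:decomposition L^r}). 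Your route is more self-contained but must establish that uniformity by hand; the paper's route trades this for careful bookkeeping of $k$-dependent amplitudes.

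Two quantitative points in your second case need repair, though neither is fatal. First, the lower bound $|\partial_\tau\Phi_{x,y}|\gtrsim|f(m_x)-E|$ only holds up to an additive error $O\left(k^{\epsilon-1/2}\right)$, coming both from the displacement (in the paper's computation (\ref{eqn:derivative amplitude}) the derivative is $f(n_k)-E+R_1(\mathbf{v}_k)$) and from trading $f(m_x)$ for $f(m_y)$ via the Lipschitz estimate $|f(m)-f(n)|\le D'\,\mathrm{dist}_M(n,m^\chi)$; since the theorem is claimed for \emph{every} $C>0$, absorbing this error forces the constant-adjustment and case-splitting argument (the constants $D$, $D'$, $R$ and cases A), B), C)) that closes the paper's proof, and which you pass over with ``else the previous case applies''. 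Second, your claimed gain $k^{-N(1/2-\epsilon)}$ from $N$ integrations by parts ignores that each application of $\frac{1}{ik\,\partial_\tau\Phi}\,\partial_\tau$ also differentiates $1/\partial_\tau\Phi$ and the $k$-dependent amplitude (cut-offs and coefficients living at scale $k^{1/2-\epsilon}$), each such derivative costing a factor $k^{1/2-\epsilon}$; as in the paper's estimate (\ref{eqn:estimate summand}), the net gain per step is only $k^{-2\epsilon}$, which still yields $O\left(k^{-\infty}\right)$ but with a different exponent count than the one you state.
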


A more explicit formulation is the following: there exist constants 
$$C_j=C_j(C,\epsilon)>0,\,\,\,\,\,\,\,\,j=1,2,\ldots,$$ 
such that
for any choice of sequences $m_k,\,n_k\in M$ satisfying 
$$
\max\left\{\mathrm{dist}_M\left(n_k,m_k^\chi\right),\,|f(m_k)-E|\right\}\ge C\,k^{\epsilon-\frac{1}{2}} 
$$
for $k=1,2,\ldots$ we have
$$
\big|\mathcal{G}_k(x_k,y_k)\big|\le C_j\,k^{-j}
$$
whenever $(x_k,y_k)\in X_{m_k}\times X_{n_k}$.

\begin{cor}
 \label{cor:diagonal case}
 In the hypothesis of Theorem \ref{thm:rapid decrease}, we have
 $
\mathcal{G}_k(x,x)=O\left (k^{-\infty}\right)
$
uniformly for
$$
|f(m_x)-E|\ge C\,k^{\epsilon-\frac{1}{2}}.
$$
\end{cor}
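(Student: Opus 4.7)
The plan is to deduce this corollary directly from Theorem \ref{thm:rapid decrease} by specializing to the diagonal $y=x$. Setting $y=x$ we have $m_y=m_x$, so the quantity $\mathrm{dist}_M(m_y,m_x^\chi)$ appearing in the hypothesis of Theorem \ref{thm:rapid decrease} becomes $\mathrm{dist}_M(m_x,m_x^\chi)$, which is a non-negative real number (it vanishes, for example, when $0\in\mathrm{supp}(\chi)$, since then $m_x=\phi^M_0(m_x)\in m_x^\chi$, but this observation will not be needed).

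The key tautology is that if $|f(m_x)-E|\ge C\,k^{\epsilon-\frac{1}{2}}$ then a fortiori
$$
\max\left\{\mathrm{dist}_M\left(m_x,m_x^\chi\right),\,|f(m_x)-E|\right\}\ge |f(m_x)-E|\ge C\,k^{\epsilon-\frac{1}{2}},
$$
so the hypothesis of Theorem \ref{thm:rapid decrease} is satisfied (regardless of the size of $\mathrm{dist}_M(m_x,m_x^\chi)$). Applied to arbitrary sequences of the form $(x_k,x_k)$ with $x_k\in X_{m_k}$ and $m_k$ satisfying the corollary's hypothesis, the uniform estimate of Theorem \ref{thm:rapid decrease} furnishes constants $C_j=C_j(C,\epsilon)$ such that $|\mathcal{G}_k(x_k,x_k)|\le C_j\,k^{-j}$ for every $j$. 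Since restricting the admissible sequences in Theorem \ref{thm:rapid decrease} from $X_{m_k}\times X_{n_k}$ to the diagonal subset $\{(x,x):x\in X_{m_k}\}$ can only preserve or strengthen uniformity, nothing further is required. There is thus no real obstacle: the corollary is a formal consequence of the theorem.
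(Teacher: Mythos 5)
Your proof is correct and is essentially the paper's own (implicit) argument: the corollary is stated there without proof precisely because, upon setting $y=x$ so that $m_y=m_x$, the hypothesis $|f(m_x)-E|\ge C\,k^{\epsilon-\frac{1}{2}}$ forces the maximum in Theorem \ref{thm:rapid decrease} to satisfy the same bound, and the uniform estimate specializes to diagonal sequences $(x_k,x_k)$ without loss. Nothing further is needed.
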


Let us heuristically interpret $\mathcal{G}_k$ as a smoothed spectral projector, associated
to a spectral band of the zeroth order T\"{o}plitz operator 
$T^\sharp(f)_k$ (page \pageref{pageTsharp}),
of width $O\left(k^{-1}\right)$ about $E$. Then, in the range 
of Corollary \ref{cor:diagonal case}, all the eigenvalues in the band
stay at a distance $O\left(k^{\epsilon-1/2}\right)$ from $f(m_x)$. 
Thus Corollary \ref{cor:diagonal case} tallies in spirit 
with the results of \cite{pao_imrn}, where it is proved that the eigenfunctions
$e_{kj}$ pertaining to eigenvalues $\lambda_{kj}$ with $\big|\lambda_{kj}-f(m_x)\big|\ge C_1\,k^{\epsilon-1/2}$
contribute negligibly for $k\rightarrow +\infty$ to the equivariant Szeg\"{o} kernel at $x$, $\Pi_k(x,x)$.

Given this, we look for explicit asymptotic expansions for $\mathcal{G}_k$ in shrinking neighborhoods
of $\widetilde{\mathcal{R}}_\chi (f,E)$ (pages \pageref{pageRchifE} and \pageref{pageRchifE1}). We shall furthermore restrict our analysis to the open sublocus 
\begin{equation}
 \label{eqn:defnRchiprime}
 \mathcal{R}_\chi(f,E)'=:\big\{(m,n)\in \mathcal{R}_\chi(f,E)\,:\,\upsilon_f(m)\neq 0\big\},
\end{equation}
and to its inverse image $\widetilde{\mathcal{R}}_\chi (f,E)'\subseteq X\times X$. Since $f$ is $\phi^M$-invariant, 
if $n=m_\tau$ and
$\upsilon_f(m)\neq 0$, then also $\upsilon_f(n)\neq 0$.

Let us fix $(x,y)\in \widetilde{\mathcal{R}}_\chi (f,E)'$,
and HLC systems centered at $x$ and $y$, respectively, that we shall denote additively as above.
Then
$\mathrm{Per}^\mathbb{R}_M(m_y\mapsto m_x)$ is a discrete subset of $\mathbb{R}$, either infinite or reduced to
a point, and so $\mathrm{Per}^\mathbb{R}_M(m_y\mapsto m_x)\cap \mathrm{supp}(\chi)$ is a finite set.
We shall write 
\begin{equation}
 \label{eqn:period set}
\mathrm{Per}^\mathbb{R}_M(m_y\mapsto m_x)=\{\tau_a\}_\mathcal{A}
\end{equation}
where the index set $\mathcal{A}$
depends on $x$; either
$\mathcal{A}=\{0\}$, or else $\mathcal{A}=\mathbb{Z}$. Accordingly,
$$
\mathrm{Per}^{\mathbb{R}\times S^1}_X(y\mapsto x)=\left\{\left(\tau_a,e^{i\vartheta_a}\right)\right\}_\mathcal{A}.
$$
Thus, for every $a\in \mathcal{A}$ we have (see Definition \ref{defn:general periods})
\begin{equation}
 \label{eqn:lifted period}
  x_{\tau_a}=\phi^X_{-\tau_a}(x)=e^{i\vartheta_a}\,y.
\end{equation}

We shall, furthermore, denote by $A_a$ ($a\in \mathcal{A}$) 
the unitary (that is, symplectic and orthogonal) $2d\times 2d$
matrix representing the differential
\begin{equation}
 \label{eqn:differentialHLC}
\mathrm{d}_{m_x}\phi^{M}_{-\tau_a}:T_{m_x}M\rightarrow T_{m_y}M
\end{equation}
with respect to the given HLC systems.

\begin{thm}
 \label{thm:scaling asymptotics}
 Adopt the hypothesis and notation of Theorem \ref{thm:rapid decrease},
 and the notation (\ref{eqn:period set}).
 Suppose $(x,y)\in \widetilde{\mathcal{R}}_\chi(f,E)'$ (see (\ref{eqn:defnRchiprime})), 
 and adopt HLC systems centered at $x$ and $y$.
Fix $C>0$ and $\epsilon\in (0,1/6)$. Then uniformly in $\mathbf{v}_1,\,\mathbf{v}_2\in \mathbb{R}^{2d}$
satisfying $\|\mathbf{v}_1\|,\,\|\mathbf{v}_2\|\le C\,k^{\epsilon}$ the following asymptotic expansion
holds for $k\rightarrow +\infty$:
\begin{eqnarray*}
 \lefteqn{\mathcal{G}_k\left(x+\left(\theta_1,\frac{\mathbf{v}_1}{\sqrt{k}}\right),
 y+\left(\theta_2,\frac{\mathbf{v}_2}{\sqrt{k}}\right)\right)}\\
&\sim&\dfrac{\sqrt{2}}{\|\upsilon_f(m_x)\|}\,\left(\frac{k}{\pi}\right)^{d-1/2}\,e^{ik(\theta_1-\theta_2)}\,
\sum_{a\in \mathcal{A}}\,\mathbf{G}_k^{(a)}
\left(x+\frac{\mathbf{v}_1}{\sqrt{k}},y+\frac{\mathbf{v}_2}{\sqrt{k}}\right),
\end{eqnarray*}
where for each $a$
\begin{eqnarray}
 \label{eqn:a-th summand expansion}
\lefteqn{\mathbf{G}_k^{(a)}
\left(x+\frac{\mathbf{v}_1}{\sqrt{k}},y+\frac{\mathbf{v}_2}{\sqrt{k}}\right)}\\
&\sim&e^{ik(\vartheta_a-\tau_a E)+\mathcal{Q}(A_a\mathbf{v}_1,\mathbf{v}_2)}\,\left[\chi(\tau_a)
+\sum_{j=1}^{+\infty}k^{-j/2}\,P_{aj}^\chi(m_x,m_y;
\mathbf{v}_1,\mathbf{v}_2)\right];  \nonumber
\end{eqnarray}
here $\mathcal{Q}$ is as in (\ref{eqn:defn of Q}) and
$P_{aj}^\chi$ is a polynomial in $(\mathbf{v}_1,\mathbf{v}_2)$, of degree $\le 3j$ and parity
$(-1)^j$, whose coefficients depend smoothly on $m_x$ and $m_y$, and which vanishes identically when 
$\tau_a\not\in \mathrm{supp}(\chi)$. More precisely $P_{aj}^\chi$ is, as a function of $\chi$, the evaluation
at $\tau_a$ of a differential polynomial in $\chi$ of degree $\le j$.
\end{thm}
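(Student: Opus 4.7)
The plan is to use Fourier inversion to express
\[
\mathcal{G}_k(x,y)=\int_{\mathbb{R}}e^{-ik\tau E}\,\chi(\tau)\,\Pi_k(x_\tau,y)\,\mathrm{d}\tau,
\]
an immediate consequence of (\ref{eqn:spectral gutzwiller}), (\ref{eqn:spectral unitary}), and the identity $U(\tau)_k(x,y)=\Pi_k(x_\tau,y)$. The $S^1$-equivariance of $\phi^X$ and of $\Pi_k$ (each contributing $e^{\pm ik\theta_j}$) factors out the overall phase $e^{ik(\theta_1-\theta_2)}$, so it suffices to analyze the integral at the scaled points $x+\mathbf{v}_1/\sqrt{k}$ and $y+\mathbf{v}_2/\sqrt{k}$.

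The next step is to localize the $\tau$-integration. Away from the finite set $\{\tau_a\}_{a\in\mathcal{A}}\cap\mathrm{supp}(\chi)$, the base points $\pi((x+\mathbf{v}_1/\sqrt{k})_\tau)$ and $m_y+\mathbf{v}_2/\sqrt{k}$ remain bounded apart in $M$ (for $\|\mathbf{v}_j\|\le Ck^\epsilon$), and Theorem \ref{thm:rapid decrease} (or directly the off-diagonal decay of $\Pi_k$) forces those contributions to be $O(k^{-\infty})$. For each $\tau_a$, I would write $\tau=\tau_a+\sigma$ and use $\phi^X_{-\tau_a}(x)=e^{i\vartheta_a}y$, the $S^1$-equivariance of $\phi^X$, the representation of $\mathrm{d}_{m_x}\phi^M_{-\tau_a}$ by $A_a$ in the chosen HLC, and the $S^1$-equivariance of $\Pi_k$, to rewrite the localized $a$-th piece as
\[
e^{ik(\vartheta_a-\tau_a E)}\!\int e^{-ikE\sigma}\chi(\tau_a+\sigma)\rho_a(\sigma)\,\Pi_k\!\left((y+A_a\mathbf{v}_1/\sqrt{k})_\sigma+O(1/k),\,y+\mathbf{v}_2/\sqrt{k}\right)\mathrm{d}\sigma.
\]

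The core of the argument is then a rescaling $\sigma=s/\sqrt{k}$ combined with the Shiffman--Zelditch near-diagonal asymptotics of $\Pi_k$ at $y$ in HLC. Taylor expanding $\phi^X_{-s/\sqrt{k}}$ at $y$ using $\widetilde{\upsilon}_f(y)=\upsilon_f(m_y)^\sharp-E\,\partial/\partial\theta$, the first argument of $\Pi_k$ becomes $y+(sE/\sqrt{k},\,(A_a\mathbf{v}_1-s\,\upsilon_{m_y})/\sqrt{k})+O(1/k)$. The $\theta$-factor $e^{ik\cdot sE/\sqrt{k}}=e^{i\sqrt{k}sE}$ from the Szeg\H{o} scaling asymptotics cancels exactly with $e^{-ikE\sigma}=e^{-i\sqrt{k}sE}$, leaving to leading order the integrand $e^{\psi_2(A_a\mathbf{v}_1-s\upsilon_{m_y},\mathbf{v}_2)}$, a Gaussian in $s$ with quadratic coefficient $-\tfrac{1}{2}\|\upsilon_f(m_x)\|^2$. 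Integrating in $s$ produces $\sqrt{2\pi}/\|\upsilon_f(m_x)\|$, which together with $(k/\pi)^d$ from the Shiffman--Zelditch leading term and $\mathrm{d}\sigma=\mathrm{d}s/\sqrt{k}$ yields the prefactor $(\sqrt{2}/\|\upsilon_f(m_x)\|)(k/\pi)^{d-1/2}$. Completing the square and using that $\mathrm{d}\phi^M_{-\tau_a}$ preserves the orthogonal decomposition $\langle J\upsilon_f\rangle\oplus\langle\upsilon_f\rangle\oplus S$ (the flow is holomorphic, isometric, and preserves $\upsilon_f$), the remaining exponent becomes exactly $\mathcal{Q}(A_a\mathbf{v}_1,\mathbf{v}_2)$ as in (\ref{eqn:defn of Q}).

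The lower-order polynomials $P_{aj}^\chi$ arise by collecting, at each order $k^{-j/2}$, three kinds of contributions: higher Taylor terms of the smooth map $(\sigma,\mathbf{w})\mapsto(y+\mathbf{w})_\sigma$, subleading terms in the Shiffman--Zelditch expansion of $\Pi_k$, and Taylor coefficients of $\chi$ at $\tau_a$. Each produces a polynomial in $(s,\mathbf{v}_1,\mathbf{v}_2)$ times a power of $k^{-1/2}$; Gaussian moment integration in $s$ then outputs a polynomial in $(\mathbf{v}_1,\mathbf{v}_2)$ of degree $\le 3j$ and parity $(-1)^j$, as is standard for stationary-phase expansions on Heisenberg charts, with $\chi$-dependence expressed as a differential polynomial of order $\le j$ evaluated at $\tau_a$. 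The restriction $\epsilon<1/6$ enters precisely to force the cubic remainders $k^{-1/2+3\epsilon}$ (from third-order Taylor terms of the flow) to be negligible. The principal technical obstacle is the geometric bookkeeping that identifies the completed-square exponent with $\mathcal{Q}(A_a\mathbf{v}_1,\mathbf{v}_2)$: this requires careful use of the K\"ahler relations $\omega(\cdot,J\cdot)=g(\cdot,\cdot)$ and of the intertwining of $A_a$ with the $\mathrm{t}/\mathrm{v}/\mathrm{h}$ decomposition of Definition \ref{defn:tvh components}.
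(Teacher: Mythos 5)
Your proposal is correct in substance and follows the same skeleton as the paper's proof: the Fourier representation $\mathcal{G}_k(x,y)=\int e^{-ik\tau E}\chi(\tau)\,\Pi_k(x_\tau,y)\,\mathrm{d}\tau$, localization of the $\tau$-integral near the periods $\tau_a$, the displacement formula $\phi^X_{-\tau_a}(x+\mathbf{v}_1)=y+\big(\vartheta_a+R_3,A_a\mathbf{v}_1+R_2\big)$ combined with the infinitesimal flow formula at $y$, insertion of the near-diagonal scaling asymptotics of $\Pi_k$, and a Gaussian evaluation in $\tau$ whose critical value is identified with $\mathcal{Q}(A_a\mathbf{v}_1,\mathbf{v}_2)$ through the $\mathrm{t}/\mathrm{v}/\mathrm{h}$ decomposition and the unitarity of $A_a$. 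The only genuine divergence is the final integration device: you rescale $\sigma=s/\sqrt{k}$ and integrate the resulting Gaussian explicitly, Taylor-expanding $\chi$ at $\tau_a$, whereas the paper keeps $\tau$ unrescaled, views the localized integral as an oscillatory integral with the complex quadratic phase $\widetilde{\Upsilon}_a$, and invokes the Melin--Sj\"{o}strand stationary phase lemma, which forces an almost-analytic extension of $\chi$ at the complex critical point $\tau_c$. Because the phase is exactly quadratic in $\tau$ the two devices are equivalent; yours is somewhat more elementary, at the price of controlling by hand the Taylor remainders of $\chi$ and of the flow over the effective range $|s|\lesssim k^{\epsilon}$, and of reproducing the same prefactor $\sqrt{2\pi}/\|\upsilon_f\|$, which you do correctly.

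Two points need sharpening. First, your localization in $\tau$ is stated at a fixed scale (away from fixed neighborhoods of the $\tau_a$'s), but the near-diagonal expansion of $\Pi_k$ is valid only when its two arguments lie within $O\left(k^{\epsilon-1/2}\right)$ of each other; before inserting it you must therefore cut off to $|\tau-\tau_a|\le \beta C\,k^{\epsilon-1/2}$, which is exactly the content of Proposition \ref{prop:localizzazione in tau} and rests on Proposition \ref{prop:rapid decrease} applied at that shrinking scale — the tool you cite, but not at the scale at which you apply it; as written, plugging the scaling asymptotics into the integral over a fixed neighborhood of $\tau_a$ is not justified in the intermediate region $k^{\epsilon-1/2}\lesssim|\sigma|\lesssim 1$. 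Second, the claims that the coefficients are polynomials of degree $\le 3j$ and parity $(-1)^j$, and that the $\chi$-dependence is a differential polynomial of degree $\le j$ evaluated at $\tau_a$, are not automatic ``standard facts'': they require the bookkeeping the paper performs (rearranging the Szeg\"{o} expansion, absorbing $e^{ikR_3}$ into the amplitude as in Lemma \ref{lem:expansion Bk}, and the counting in Lemmas \ref{lem:ell=0}, \ref{lem:finitely many terms} and \ref{lem:3bound}); your sketch identifies the right three sources of corrections, but a complete proof must record this counting explicitly.
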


Notice that $\|\upsilon_f(m_x)\|=\|\upsilon_f(m_y)\|$ and that, by the previous remarks, the sum over $a$ in the statement of
Theorem \ref{thm:scaling asymptotics} is finite. In addition, the $j$-th summand in (\ref{eqn:a-th summand expansion})
satisfies
\begin{equation}
 \label{eqn:bound ath summand}
 \left |e^{ik(\vartheta_a-\tau_a E)+\mathcal{Q}(A_a\mathbf{v}_1,\mathbf{v}_2)}\,k^{-j/2}\,P_j(m_x,m_y;
\mathbf{v}_1,\mathbf{v}_2)\right|\le
D_a\,k^{-3j\,(1/6-\epsilon)},
\end{equation}
so that (\ref{eqn:a-th summand expansion}) is indeed an asymptotic expansion.

In view of the previous parity statement, we recover an asymptotic expansion 
at fixed points akin to the one in \cite{bpu2}, by setting $\theta_j=0$ and $\mathbf{v}_j=0$.

\begin{cor}
 \label{cor:scaling asymptotics}
Suppose $(x,y)\in \widetilde{\mathcal{R}}_\chi(f,E)'$. Then the following asymptotic expansion
holds for $k\rightarrow +\infty$:
\begin{eqnarray*}
 \mathcal{G}_k\left(x,y\right)\sim\dfrac{\sqrt{2}}{\|\upsilon_f(m_x)\|}\,\left(\frac{k}{\pi}\right)^{d-1/2}\,
\sum_{a\in \mathcal{A}}\,\mathbf{G}_k^{(a)}
\left(x,y\right),
\end{eqnarray*}
where for each $a$
\begin{eqnarray*}
 \label{eqn:a-th summand expansion 1}
\mathbf{G}_k^{(a)}
\left(x,y\right)\sim e^{ik(\vartheta_a-\tau_a  E)}\,\left[\chi(\tau_a)+\sum_{j=1}^{+\infty}k^{-j}\,A_{aj}(m_x,m_y)\right]; 
\end{eqnarray*}
here 
$A_{aj}=A_{aj}^\chi$ is a $\mathcal{C}^\infty$ function on $\mathcal{R}_\chi(f,E)$.
\end{cor}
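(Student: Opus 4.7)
The plan is to derive this fixed-point statement directly from Theorem \ref{thm:scaling asymptotics} by evaluating the latter at $\theta_1=\theta_2=0$ and $\mathbf{v}_1=\mathbf{v}_2=\mathbf{0}$. At these arguments the prefactor $e^{ik(\theta_1-\theta_2)}$ equals $1$, and because every term in the definition (\ref{eqn:defn of Q}) of $\mathcal{Q}$ is quadratic in the tangent vectors, the exponential $e^{\mathcal{Q}(A_a\mathbf{v}_1,\mathbf{v}_2)}$ also reduces to $1$. Each summand $\mathbf{G}_k^{(a)}(x,y)$ therefore collapses to
$$e^{ik(\vartheta_a-\tau_a E)}\left[\chi(\tau_a)+\sum_{j\ge 1}k^{-j/2}\,P_{aj}^\chi\big(m_x,m_y;\mathbf{0},\mathbf{0}\big)\right].$$
Since $\mathbf{v}_j=\mathbf{0}$ lies well inside the admissible range $\|\mathbf{v}_j\|\le C\,k^\epsilon$ of Theorem \ref{thm:scaling asymptotics}, the uniform bound (\ref{eqn:bound ath summand}) carries over verbatim and this is a bona fide asymptotic expansion.

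The second step is to extract the integer-power expansion by invoking the parity clause of Theorem \ref{thm:scaling asymptotics}: each $P_{aj}^\chi$ has parity $(-1)^j$ in $(\mathbf{v}_1,\mathbf{v}_2)$, so for odd $j$ it is an odd polynomial and vanishes at the origin. All half-integer powers of $k$ thus drop out. Reindexing the surviving terms by $j=2\ell$ and setting
$$A_{a\ell}(m_x,m_y)=:P_{a,2\ell}^\chi\big(m_x,m_y;\mathbf{0},\mathbf{0}\big),\qquad \ell\ge 1,$$
yields the integer-power expansion claimed in the statement; the functions $A_{a\ell}$ depend smoothly on $(m_x,m_y)$ because the coefficients of $P_{a,2\ell}^\chi$ were assumed smooth in the base points.

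I do not expect any serious obstacle, since the corollary is really a specialization of Theorem \ref{thm:scaling asymptotics}. The only point that merits a brief check is that the $A_{a\ell}$ descend to well-defined smooth functions on $\mathcal{R}_\chi(f,E)$, independent of the choice of lifts $(x,y)\in X\times X$ and of the HLC systems adopted. This follows from the $S^1\times S^1$-equivariance of the Gutzwiller--T\"{o}plitz kernel: a phase rotation $(x,y)\mapsto (r_{\vartheta_1}x,r_{\vartheta_2}y)$ multiplies $\mathcal{G}_k(x,y)$ by $e^{ik(\vartheta_1-\vartheta_2)}$, while correspondingly shifting each $\vartheta_a$ in (\ref{eqn:lifted period}) by $\vartheta_1-\vartheta_2$; the overall prefactor $e^{ik(\vartheta_a-\tau_a E)}$ therefore absorbs the ambiguity, and the residual coefficients $A_{a\ell}$ depend only on $(m_x,m_y)$.
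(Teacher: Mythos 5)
Your proposal is correct and follows exactly the route the paper itself takes: the corollary is obtained by specializing Theorem \ref{thm:scaling asymptotics} at $\theta_1=\theta_2=0$, $\mathbf{v}_1=\mathbf{v}_2=\mathbf{0}$, where $\mathcal{Q}$ vanishes, and then using the parity statement to discard the odd-$j$ (half-integer power) terms, leaving $A_{aj}=P_{a,2j}^\chi(m_x,m_y;\mathbf{0},\mathbf{0})$. Your additional check that the coefficients descend to well-defined smooth functions on $\mathcal{R}_\chi(f,E)$ via $S^1\times S^1$-equivariance is a sensible refinement of the same argument, not a different one.
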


Let us consider the special case $x=y$, so that $\mathcal{A}$ indexes the periods of $x\in X_E$
(defined in (\ref{eqn:MEXE})). We can heuristically interpret Corollary \ref{cor:scaling asymptotics}
as saying that the eigensections of the zeroth order T\"{o}plitz operator
$T^\sharp(f)_k$ (defined on page \pageref{pageTsharp}) 
associated to a spectral band of width $O\left(k^{-1}\right)$ about 
$E$ yield a contribution to $\Pi_k(x,x)$ which is $O\left(k^{d-1/2}\right)$.
This should be contrasted with the results in \cite{pao_imrn}, which imply that the eigensections
pertaining to a \lq slowly shrinking\rq\, spectral band of $T^\sharp(f)_k$
of width $O\left(k^{\epsilon-1/2}\right)$ 
about $E$ asymptotically capture all of 
$\Pi_k(x,x)$, up to a negligible contribution; recall that 
$\Pi_k(x,x)=O\left(k^d\right)$ (\S \ref{sctn:HLC}).

It is in order to show how,
by integrating the local rescaled asymptotic expansion in Theorem \ref{thm:scaling asymptotics} 
term by term, one obtains a asymptotic expansion for the trace of $\mathcal{G}_k$,
analogue to the one of Theorem 1.2 of \cite{bpu2};
this expansion probes into the asymptotic clustering, for $k\rightarrow +\infty$,
of the eigenvalues $\lambda_{kj}$
in a spectral band of width $O\left(1\right)$
centered at $kE$. 

To this end, we shall make the stronger assumption that $E$ be a regular value of $f$, which implies that
$\mathcal{R}_\chi(f,E)'=\mathcal{R}_\chi(f,E)$ in (\ref{eqn:defnRchiprime}); 
under this condition, by Proposition \ref{prop:hypersurface periods} below the set 
$\mathrm{Per}^\mathbb{R}_M(M_E)$ (Definition \ref{defn:periods subsets}) 
is a discrete subset of $\mathbb{R}$.
In fact, either $\mathrm{Per}^\mathbb{R}_M(M_E)=\{0\}$ (when there are no closed orbits of $\phi^M$
on $M_E$), or else it is infinite countable (and drifting to infinity).

Let us write \label{pagefixedloci}
$\mathrm{Per}^\mathbb{R}_M(M_E)=\{\sigma_b\}_{b\in \mathcal{B}}$.
For any $b\in \mathcal{B}$, let $M(\sigma_b)\subseteq M$ be the fixed point locus
of $\phi^M_{\sigma_b}$, and let $M(\sigma_b)_l$, \label{page connected components Msigmabl} $1\le l\le n_b$, 
be its connected components.
Each $M(\sigma_b)_l$ is a compact and connected complex submanifold of $M$, of complex dimension 
$d_{bl}=d-c_{bl}$.

Given the unitarity of $\mathrm{d}_m\phi^M_{\sigma_b}$, 
the tangent and normal spaces $T_mM(\sigma_b)$ and
$N_m\big(M(\sigma_b)\big)=T_mM(\sigma)^\perp$ at each $m\in M(\sigma_b)_l$ are given by, respectively,
\begin{equation}
 \label{eqn:tangent and normal}
 T_mM(\sigma_b)_l=\ker\left(\mathrm{d}\phi^M_{\sigma_b}-\mathrm{id}_{T_mM}\right),
\,\,\,\,\,N_mM(\sigma_b)_l=\mathrm{im}\left(\mathrm{d}\phi^M_{\sigma_b}-\mathrm{id}_{T_mM}\right),
\end{equation}
where $\mathrm{im}(h)$ denotes the image of a map $h$.

The $\mathbb{C}$-linear map $\mathrm{id}_{T_mM}-\mathrm{d}\phi^M_{-\sigma_b}$
determines a complex linear automorphism of the holomorphic normal bundle
of $M(\sigma_b)_l$, whose determinant 
\begin{equation}
 \label{defn:dbl}
D(b,l)=:\det \left (\left.\mathrm{id}_{T_mM}-\mathrm{d}\phi^M_{-\sigma_b}\right|_{N_mM(\sigma_b)_l}:
N_mM(\sigma_b)_l\rightarrow N_mM(\sigma_b)_l\right)
\end{equation}
is therefore a non-zero constant 
on $M(\sigma_b)_l$. \label{pageDeterminant}

One can see that if $E$ is a regular value of $f$, then $M_E$ and each $M(\sigma_b)_l$ meet transversally
(\S \ref{sctn:transverse intersections}). 
We shall set 
\begin{equation}
 \label{eqn:defn of MEsigma}
 M_E(\sigma_b)=:M_E\cap M(\sigma_b),\,\,\,\,\,M_E(\sigma_b)_l=:M_E\cap M(\sigma_b)_l;
\end{equation}
this is a submanifold of $M$, perhaps not connected. Its (real) dimension is
$2\,d_{bl}-1$.

For each $b\in \mathcal{B}$ and and $l=1,\ldots,n_b$, let
\begin{equation}
 \label{eqn:defn of XEsigma}
 X_E(\sigma_b)=:\pi^{-1}\big(M_E(\sigma_b)\big),\,\,X_E(\sigma_b)_l=:\pi^{-1}\big(M_E(\sigma_b)_l\big)\subseteq X.
\end{equation}

Then, in view of the connectedness of $M(\sigma_b)_l$, 
there is a unique \label{pagegbl} $g_{bl}=e^{i\vartheta_{bl}}\in S^1$, such that
$x_{\sigma_b}=r_{g_{bl}}(x)$ for every $x\in X(\sigma_b)_l$. 

With this notation, we can now formulate the following global spectral counterpart of Theorem \ref{thm:scaling asymptotics}
(see Theorem 1.2 of \cite{bpu2}):

\begin{thm}
 \label{thm:trace of G_k}
Adopt the hypothesis of Theorem \ref{thm:scaling asymptotics}, and suppose 
in addition that $E$ is a regular value of $f$. Then the following asymptotic expansion holds
for $k\rightarrow +\infty$:
\begin{eqnarray*}
 \lefteqn{\sum_j\widehat{\chi}(kE-\lambda_{kj})}\\
&\sim&\sum_{b\in \mathcal{B}}\,e^{-ik\sigma_b E}\,\sum_{l=1}^{n_b}\left(\frac{k}{\pi}\right)^{d_{bl}-1}\,
\dfrac{e^{ik\vartheta_{bl}}}{D(b,l)}\,\int_{M_E(\sigma_b)_l}\dfrac{1}{\|\upsilon_f(m)\|}\,\mathrm{d}V_{M_E(\sigma_b)_l}(m)\\
&&\cdot \left[\chi(\sigma_b)+\sum_{j\ge 1}k^{-j}\,A_j(b,l)\right],
\end{eqnarray*}
where $A_j(b,l)=A_j(b,l)^\chi\in \mathbb{C}$ are appropriate constants, which vanish for
$\sigma_b\not\in \mathrm{supp}(\chi)$, and $\mathrm{d}V_{M_E(\sigma_b)_l}$ is the Riemannian density on 
$M_E(\sigma_b)_l$.
\end{thm}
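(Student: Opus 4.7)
The plan is to express the trace as a diagonal integral on $X$ and reduce it to Gaussian integrations in the normal directions to each fixed-point locus via Theorem \ref{thm:scaling asymptotics}. Start from
$$\sum_j \widehat{\chi}(kE - \lambda_{kj}) = \int_X \mathcal{G}_k(x, x)\, \mathrm{d}V_X(x),$$
which by $S^1$-invariance of the integrand descends to an integral over $M$. By Corollary \ref{cor:diagonal case} the integrand is $O(k^{-\infty})$ outside a $Ck^{\epsilon-1/2}$-tubular neighborhood of $\{m\in M: f(m)=E,\ m\in m^\chi\}$, which under the regularity of $E$ is the disjoint union $\bigsqcup_{b,l} M_E(\sigma_b)_l$ over $\sigma_b\in \mathrm{supp}(\chi)\cap \mathrm{Per}^\mathbb{R}_M(M_E)$. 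To avoid interference between different periods at overlapping fixed loci, I would first decompose $\chi = \sum_b \chi_b$ with each $\chi_b$ supported in a neighborhood of $\sigma_b$ disjoint from the other elements of $\mathrm{Per}^\mathbb{R}_M(M_E)$; by linearity of the construction of $\mathcal{G}_k$ in $\chi$, the trace splits accordingly and it suffices to treat one $b$ at a time.

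For fixed $b$, with a partition of unity subordinate to tubular neighborhoods of each $M_E(\sigma_b)_l$, parametrize nearby points by $(m_0, \mathbf{w}_\perp)$ with $m_0 \in M_E(\sigma_b)_l$ and $\mathbf{w}_\perp \in N_{m_0} M_E(\sigma_b)_l$ split orthogonally as $s\, J_{m_0}\upsilon_f(m_0)/\|\upsilon_f(m_0)\| + \mathbf{w}_{\perp h}$ with $\mathbf{w}_{\perp h} \in N_{m_0} M(\sigma_b)_l$. Rescale $\mathbf{w}_\perp = \mathbf{u}/\sqrt{k}$ (Jacobian $k^{-(2c_{bl}+1)/2}$), pick $x_0 \in X_{m_0}$ with HLC adapted to this decomposition, and apply Theorem \ref{thm:scaling asymptotics} at $x=y=x_0+(\theta,\mathbf{u}/\sqrt{k})$; the $\theta$-integration contributes $1$. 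Only the $a\in\mathcal{A}$ with $\tau_a=\sigma_b$ survives, since $\chi_b(\tau_a)=0$ otherwise; this $a$ has $\vartheta_a=\vartheta_{bl}$ and $A_a=\mathrm{d}\phi^M_{-\sigma_b}$, and a direct expansion using Definition \ref{defn:tvh components} gives
$$\mathcal{Q}(A_a\mathbf{u},\mathbf{u}) = -2 s^{2} + \psi_2\bigl(A_a\mathbf{u}_{\perp h},\mathbf{u}_{\perp h}\bigr).$$
The normal Gaussian integrations evaluate to
$$\int_\mathbb{R} e^{-2 s^{2}}\, \mathrm{d}s = \sqrt{\pi/2}, \qquad \int_{N_{m_0}M(\sigma_b)_l} e^{\psi_2(A_a\mathbf{u}_{\perp h},\mathbf{u}_{\perp h})}\, \mathrm{d}\mathbf{u}_{\perp h} = \frac{\pi^{c_{bl}}}{D(b,l)},$$
the second following from the complex-Gaussian identity $\int_{\mathbb{C}^c} e^{h((A-I)v,v)}\, \mathrm{d}v = \pi^c/\det_\mathbb{C}(I-A)$ applied to the fixed-point-free unitary $\mathrm{d}\phi^M_{-\sigma_b}|_{N_{m_0}M(\sigma_b)_l}$.

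Combining the prefactor $(\sqrt{2}/\|\upsilon_f(m_0)\|)(k/\pi)^{d-1/2}$ with the Jacobian $k^{-c_{bl}-1/2}$ and the two Gaussian factors collapses all powers of $k$ and $\pi$ to $(k/\pi)^{d_{bl}-1}/(\|\upsilon_f(m_0)\|\, D(b,l))$, multiplied by $\chi_b(\sigma_b)=\chi(\sigma_b)$ and $e^{ik(\vartheta_{bl}-\sigma_b E)}$; tangential integration over $m_0\in M_E(\sigma_b)_l$ with respect to the induced Riemannian density reproduces the leading term in the claim. The lower-order coefficients $A_j(b,l)$ arise by Wick-contracting the polynomials $P_{aj}^{\chi_b}(\mathbf{u},\mathbf{u})$ (from Theorem \ref{thm:scaling asymptotics}) and the Taylor expansions of the tubular Jacobian and of $\|\upsilon_f\|^{-1}$ against the Gaussian measure; the parity $(-1)^j$ of $P_{aj}^{\chi_b}$ in $\mathbf{u}$, combined with the evenness of the Gaussian, annihilates every half-integer power of $k^{-1/2}$, leaving a clean expansion in $k^{-1}$. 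The principal technical difficulty lies in the precise Gaussian bookkeeping ensuring that the determinant $D(b,l)$ and the exact powers of $k$ and $\pi$ emerge as claimed, and in carefully identifying the constants $A_j(b,l)$ via the recursive Wick calculus; a secondary subtlety is verifying that the $\chi$-decomposition genuinely separates the period contributions, which amounts to choosing the partition of unity so that no single neighborhood of $\sigma_b$ meets any other element of $\mathrm{Per}^\mathbb{R}_M(M_E)$, guaranteed by the discreteness assertion in Proposition \ref{prop:hypersurface periods}.
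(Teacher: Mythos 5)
Your overall route is the one the paper itself follows: localize the trace over $M$ near the period set, split $\chi$ into pieces $\chi_b$ concentrated near each $\sigma_b$, parametrize tubular neighborhoods of the $M_E(\sigma_b)_l$ with normal directions split into the $J\upsilon_f$-direction and $N\big(M(\sigma_b)_l\big)$, rescale by $\sqrt{k}$, apply Theorem \ref{thm:scaling asymptotics} on the diagonal, and integrate the Gaussian $e^{-2s^2+\psi_2(A_b\mathbf{n},\mathbf{n})}$; your exponent $\mathcal{Q}(A_a\mathbf{u},\mathbf{u})=-2s^2+\psi_2(A_a\mathbf{u}_{\perp h},\mathbf{u}_{\perp h})$, the Gaussian values $\sqrt{\pi/2}$ and $\pi^{c_{bl}}/D(b,l)$, the power count producing $(k/\pi)^{d_{bl}-1}$, and the parity argument eliminating half-integer powers all agree with the paper's computation.

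The genuine gap is in the localization, which you dispose of in one sentence but which is where the paper does most of its work. Corollary \ref{cor:diagonal case} only controls $|f(m)-E|$; for the orbit-distance part you need Theorem \ref{thm:rapid decrease} with $x=y$, and even granting that, the statement you actually use --- that $\mathrm{dist}_M\big(m,m^{\chi_b}\big)\le Ck^{\epsilon-1/2}$ together with $|f(m)-E|\le Ck^{\epsilon-1/2}$ forces $m$ to lie within $O\big(k^{\epsilon-1/2}\big)$ of $M_E(\sigma_b)_l$ --- is not a formal consequence of the rapid-decay results. It is exactly what is needed to restrict the rescaled normal variable to $\|\mathbf{u}\|\le Ck^{\epsilon}$, where Theorem \ref{thm:scaling asymptotics} applies, and to discard the complementary region, and it requires an argument: a compactness bound giving a uniform lower bound on $\mathrm{dist}_M(m_\tau,m)$ for $\tau$ in $\mathrm{supp}(\chi)$ away from the periods and for $m$ near $M_E$ but away from $M_E(\sigma_b)$ (the paper's Lemmas \ref{lem:1st reduction} and \ref{lem:2nd reduction}), plus the quantitative estimate $\mathrm{dist}_M\big(p,p^{\chi_b}\big)\ge C_1\|\mathbf{n}\|$ coming from the invertibility of $\mathrm{id}-\mathrm{d}\phi^M_{\sigma_b}$ on $N\big(M(\sigma_b)_l\big)$ in geodesic/adapted coordinates (Lemma \ref{lem:reduction in mathbfn}); you use that invertibility only to evaluate the Gaussian, not to justify the concentration. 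Two smaller points in the same vein: an exact splitting $\chi=\sum_b\chi_b$ with each piece supported near a period is impossible in general --- you need a remainder supported away from $\mathrm{Per}^\mathbb{R}_M(M_E)$, and showing its Gutzwiller trace is $O\left(k^{-\infty}\right)$ is again the compactness argument plus Proposition \ref{prop:rapid decrease}, not just the discreteness of Proposition \ref{prop:hypersurface periods}; and the loci $M_E(\sigma_b)_l$ are not disjoint over $b$ before that splitting (they are nested, and $M_E(0)=M_E$), though this becomes harmless once a single $\chi_b$ is fixed. By contrast, the Gaussian and Wick bookkeeping you flag as the principal difficulty is the routine part and comes out as you describe.
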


Again, the sum over $b$ is finite since $\mathcal{B}\subseteq \mathbb{R}$ is discrete
and $\chi$ has compact support.

It is in order to conclude this introduction by remarking that there is a wider scope for the 
previous results. Namely, consider a compact symplectic manifold $(M,\omega)$,  
with a compatible almost complex structure $J$ \cite{mcds}. 
The definition of compatible Hamiltonian readily generalizes to this almost K\"{a}hler setting.
By way of example, given a Hamiltonian action of a compact Lie group $G$ on $(M,\omega)$,
with moment map $\Phi:M\rightarrow \mathfrak{g}^\vee$,
one can find a $G$-invariant compatible almost complex structure $J$
on $(M,\omega)$ \cite{mcds}. Then any component of $\Phi$ is a compatible Hamiltonian for
the almost K\"{a}hler structure $(M,\omega,J)$.

Suppose now that $(M,\omega)$ is quantizable, and let $(A,h)$ be a quantizing
Hermitian line bundle. Then the theory of \cite{bdm-g}
provides analogues $H^0_J(M,A^{\otimes k})$ of the spaces
of global holomorphic sections of the integrable case, lying in the range of a generalized Szeg\"{o} projector,
which is the first step of a resolution generalizing the $\overline{\partial}$-complex of the integrable case 
(see especially \S A.5 of \cite{bdm-g};
furthermore, a detailed review of this construction is given in \cite{sz}). 
The Hilbert space direct sum of the $H^0_J(M,A^{\otimes k})$'s corresponds in the usual manner to
a generalized Hardy space on the unit circle bundle $X$.

The spaces $H^0_J(M,A^{\otimes k})$
are not fully intrinsic, but depend on some non-canonical choices; it is then not a priori clear whether a compatible
Hamiltonian induces an action on the generalized Hardy space of $(A,h)$. However, 
given as above a compact group action, we may assume that 
$G$ acts on $(A,h)$ preserving all the data involved, and so it acts unitarily on the spaces 
$H^0_J(M,A^{\otimes k})$ (\cite{bdm-g}, \S A.5, Theorem 5.9). The same holds for the contact flow of any component
of the moment map. 
It is thus natural to consider analogues the previous results in this wider picture.

In \cite{sz}, the microlocal description of the Szeg\"{o} kernel in \cite{bs} is generalized to the almost complex 
situation, and based on this near diagonal scaling asymptotics for the equivariant components of the generalized Szeg\"{o}
kernel are provided.
Granting the latter extension, the arguments in the present paper 
go over unchanged to this more general setting.

\section{Preliminaries}

\subsection{Periods}

\label{sctn:periods}

If $m\in M$ is a critical point of $f$, then $\upsilon_f(m)=0$ and $m$ is a fixed point of the Hamiltonian flow
$\phi^M$, that is, $\mathrm{Per}^\mathbb{R}_M(m)=\mathbb{R}$. If on the other hand
$\mathrm{d}_mf\neq 0$ then $\mathrm{Per}^\mathbb{R}_M(m)$ is a discrete subgroup of $\mathbb{R}$. 
In fact, suppose $\tau\in \mathrm{Per}^{R}_M(m)$ and choose a system of local coordinates 
centered at $m$. Then, in additive notation, we have for $\delta\sim 0$:
$$
\phi^M_{\tau+\delta}(m)=\phi^M_{\delta}(m)=m+\delta\,\upsilon_f(m)+O\left(\delta^2\right),
$$
which equals $m$ only for $\delta=0$.

If $\mathrm{d}_mf\neq 0$ and $\mathrm{Per}^{\mathbb{R}}_M(m)$
is not trivial, then
it is isomorphic to $\mathbb{Z}$. More precisely, if $\tau_1\in \mathbb{R}$ is the least positive element
of $\mathrm{Per}^{\mathbb{R}}_M(m)$, then $\mathrm{Per}^{\mathbb{R}}_M(m)=\mathbb{Z}\cdot \tau_1$.
We shall write $\tau_a=a\,\tau_1$.

Given any $x\in X_m=:\pi^{-1}(m)$, there exists a unique $g_1\in S^1$ such that
$x=g_1\cdot \phi^X_{\tau_1}(x)$, that is $x_{\tau_1}=r_{g_1}(x)$.
Thus, if $g_a=:g_1^a$ then
$$
\mathrm{Per}^{R\times S^1}_X(x)=\left\{\left(a\,\tau_1,g_1^a\right)\,:\,a\in \mathbb{Z}\right\}
=\left\{\left(\tau_a,g_a\right)\,:\,a\in \mathbb{Z}\right\}.
$$

Similarly, suppose $m,\,n\in M$ lie in the same $\mathbb{R}$-orbit.
Then $\mathrm{Per}^{\mathbb{R}}_M(n\mapsto m)$ is a non-empty coset of $\mathrm{Per}^{\mathbb{R}}_M(n)$.
If $x\in X_m$ and $y\in Y_n$, then 
$\mathrm{Per}^{\mathbb{R}\times S^1}_X(y\mapsto x)$ is a non-empty coset of
$\mathrm{Per}^{\mathbb{R}\times S^1}_X(y)$. For any $\tau\in \mathrm{Per}^{\mathbb{R}}_M(n\mapsto m)$,
there is a unique $g_\tau\in S^1$ such that 
$x=g_\tau\cdot \phi^X_\tau(y)$. Thus 
$$\tau\in \mathrm{Per}^{\mathbb{R}}_M(n\mapsto m)\mapsto (\tau,g_\tau)\in
\mathrm{Per}^{\mathbb{R}\times S^1}_X(y\mapsto x)$$ is a natural bijection.

Let us now consider the periods on the hypersurface $M_E$. By a standard compactness argument, $\mathrm{Per}^\mathbb{R}_M(M_E)$
is a closed subset of $\mathbb{R}$. 

\begin{prop}
 \label{prop:hypersurface periods}
 Suppose that $E$ is a regular value of $f$.
 Then $\mathrm{Per}^\mathbb{R}_M(M_E)$ is a discrete subset of $\mathbb{R}$.
\end{prop}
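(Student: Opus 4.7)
The plan is to exploit the compatibility of $f$ to reduce the claim to the action of a compact torus. Since $f$ is compatible, $\phi^M_\tau$ is a one-parameter group of holomorphic automorphisms of $(M,J)$; being Hamiltonian it also preserves $\omega$, hence it preserves $g(\cdot,\cdot)=\omega(\cdot,J\cdot)$ and is a one-parameter group of K\"ahler isometries. Because $M$ is compact, $\mathrm{Isom}(M,g)$ is a compact Lie group; let $T$ denote the closure of $\{\phi^M_\tau\}_{\tau\in\mathbb{R}}$ inside it. Then $T$ is a compact abelian Lie group, that is, a torus, and the flow is encoded in a continuous homomorphism $\rho:\mathbb{R}\to T$ with $\rho(\tau)\cdot m=\phi^M_\tau(m)$. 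Since $E$ is a regular value $f$ is non-constant, $\rho$ is non-trivial, and therefore $\ker\rho$ is a (possibly zero) discrete subgroup of $\mathbb{R}$.

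In this language, for every $m\in M$ we have $\mathrm{Per}^{\mathbb{R}}_M(m)=\rho^{-1}(\mathrm{Stab}_T(m))$, whence
$$
\mathrm{Per}^{\mathbb{R}}_M(M_E)=\rho^{-1}\Big(\bigcup_{m\in M_E}\mathrm{Stab}_T(m)\Big).
$$
Now $M_E$ is a compact $T$-invariant subset of $M$ (because $\phi^M$ preserves $f$, hence so does the closure $T$), so the principal orbit-type theorem for smooth actions of compact Lie groups on compact manifolds implies that only finitely many subgroups of $T$ occur as stabilizers of points of $M_E$; since $T$ is abelian, these subgroups are literally equal (not merely conjugate), and we can list them as $H_1,\dots,H_s$. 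Therefore $\mathrm{Per}^{\mathbb{R}}_M(M_E)=\bigcup_{i=1}^s\rho^{-1}(H_i)$, a finite union.

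To conclude, observe that each $\rho^{-1}(H_i)$ is a closed subgroup of $\mathbb{R}$, so equals either $\{0\}$, $a_i\mathbb{Z}$ for some $a_i>0$, or all of $\mathbb{R}$. The third option is excluded: $\rho^{-1}(H_i)=\mathbb{R}$ would force $\rho(\mathbb{R})\subseteq H_i$, meaning any $m\in M_E$ with $\mathrm{Stab}_T(m)=H_i$ would be fixed by the entire Hamiltonian flow, giving $\upsilon_f(m)=0$ and contradicting regularity of $E$. Hence every $\rho^{-1}(H_i)$ is discrete, and a finite union of discrete subsets of $\mathbb{R}$ is discrete, as required. The only non-routine input I foresee is the invocation of the orbit-type theorem; should a self-contained argument be preferable, one can replace it by a Slice Theorem argument at a hypothetical accumulation point $m_\infty\in M_E$, using that a linear action of a compact Lie group on a finite-dimensional vector space has only finitely many isotropy types.
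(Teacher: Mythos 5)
Your argument is correct, but it follows a genuinely different route from the paper. The paper's proof is local and elementary: assuming $\tau_j\to\tau_0$ in $\mathrm{Per}^{\mathbb{R}}_M(M_E)$ with fixed points $m_j\to m_0\in M_E$, it works in geodesic coordinates at $m_0$, uses compatibility only through the fact that $\phi^M_{\tau_0}$ is a Riemannian isometry fixing $m_0$ (so it acts as the unitary matrix $A_{\tau_0}$ in those coordinates), Taylor-expands $\phi^M_{\epsilon_j}$, and then pairs the resulting identity with $\upsilon_f(m_0)$; since $\upsilon_f(m_0)\in\ker\left(I-A_{\tau_0}^{-1}\right)$ and unitarity forces $\mathrm{im}\left(I-A_{\tau_0}^{-1}\right)\perp\ker\left(I-A_{\tau_0}^{-1}\right)$, one gets $\epsilon_j\,\big[\|\upsilon_f(m_0)\|^2+o(1)\big]=0$, hence $\epsilon_j=0$ eventually. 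You instead exploit compatibility globally: the flow closes up to a torus $T$ inside the compact isometry group (Myers--Steenrod), $M_E$ is a compact $T$-invariant submanifold because $E$ is regular, finiteness of orbit types gives finitely many stabilizers $H_1,\dots,H_s$, each $\rho^{-1}(H_i)$ is a closed subgroup of $\mathbb{R}$, and the case $\rho^{-1}(H_i)=\mathbb{R}$ is excluded by regularity of $E$. This is sound; what it buys is a cleaner structural statement (the period set is a finite union of discrete closed subgroups, so in particular closed, and one even sees it is contained in finitely many arithmetic progressions), at the price of heavier input (compactness of the isometry group plus the orbit-type finiteness or slice theorem), whereas the paper's computation is self-contained and of the same flavor as the scalar-product estimates reused elsewhere in the paper. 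Two small points to tighten: the assertion that a finite union of discrete subsets of $\mathbb{R}$ is discrete is false in general, but holds here because discrete subgroups of $\mathbb{R}$ are closed, so you should say "closed and discrete"; and the inference "$E$ regular $\Rightarrow f$ non-constant" silently assumes $M_E\neq\emptyset$ (if $M_E=\emptyset$ the proposition is vacuous anyway), which is worth a word.
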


\begin{proof}[Proof of Proposition \ref{prop:hypersurface periods}]
 Given $\tau_0\in \mathrm{Per}^\mathbb{R}_M(M_E)$, consider a sequence $\tau_j\in \mathrm{Per}^\mathbb{R}_M(M_E)$,
 $j=1,2,\ldots$, with $\tau_j\rightarrow \tau_0$ for $j\rightarrow+\infty$.
 Thus $\epsilon_j=:\tau_j-\tau_0\rightarrow 0$ for $j\rightarrow +\infty$.
 We aim to prove that $\epsilon_j=0$ for all $j\gg 0$.
 
 For every $j\ge 1$  there exists by definition $m_j\in M_E$ such that $\phi^M_{\tau_j}(m_j)=m_j$.
 By compactness of $M_E$, perhaps after passing to a subsequence we are reduced to the case where
 $m_j\rightarrow m_0$ for some $m_0\in M_E$. By continuity, we then have
 $\phi^M_{\tau_0}(m_0)=m_0$.
 
 Let us fix a unitary isomorphism $T_{m_0}M\cong \mathbb{R}^{2d}$. 
 On an open neighborhood $U\subseteq M$ of $m_0$, we have a geodesic local coordinate system
 centered at $m_0$
 \begin{eqnarray}
  \label{eqn:moving HLC}
\Gamma:\mathbf{v}\in B_{2d}(\mathbf{0},\delta)\subseteq \mathbb{R}^{2d}
\mapsto m_0+\mathbf{v}=:\exp_{m_0}(\mathbf{v})\in U.
 \end{eqnarray}
%

 
\noindent For any $j=1,2,\ldots$ we have $m_j=m_0+\mathbf{v}_j$, for a unique $\mathbf{v}_j\in B_{2d}(\mathbf{0},\delta)
$, and
 $\mathbf{v}_j\rightarrow \mathbf{0}$ for $j\rightarrow+\infty$.
 
 For $\tau\sim 0\in \mathbb{R}$ we have
 \begin{equation}
  \label{eqn:small tau displacement}
  \phi^M_\tau(m)=m+\big(\tau\,\upsilon_f(m)+R_2(\tau)\big),
 \end{equation}
 where, here and in the following, $R_j$ is a smooth function on some open neighborhood of the
 origin in a Euclidean space, vanishing to
 $j$-th order at the origin; generally dependence on additional variables will be left implicit.
 
 Since $\Gamma $ is a system of geodesic local coordinates centered at
$m_0$, and $\phi^M_{\tau_0}:M\rightarrow M$ is a Riemannian isometry fixing $m_0$, we have
\begin{equation}
 \label{eqn:riemannian isometry}
 \phi^M_{\tau_0}(m_0+\mathbf{v}_j)=m_0+A_{\tau_0}^{-1}\mathbf{v}_j,
\end{equation}
where $A_{\tau_0}$ is the matrix representing $\mathrm{d}_{m_0}\phi^M_{-\tau_0}$.

We deduce from (\ref{eqn:small tau displacement}) and (\ref{eqn:riemannian isometry})
that
\begin{eqnarray}
 \label{eqn:mixed displacements}
m_0+\mathbf{v}_j&=& m_j=\phi^M_{\tau_j}(m_j)=\phi^M_{\tau_0+\epsilon_j}(m_0+\mathbf{v}_j)\nonumber \\
&=&\phi^M_{\epsilon_j}\left(m_0+A_{\tau_0}^{-1}\mathbf{v}_j\right)\nonumber\\
 &=&m_0+\left(A_{\tau_0}^{-1}\mathbf{v}_j+\epsilon_j\,\upsilon_f(m_0)+R_2(\epsilon_j)+\epsilon_j\,R_1(\mathbf{v}_j)\right).
\end{eqnarray}
This implies
\begin{equation}
 \label{eqn:v_jA_j}
 \mathbf{v}_j-A_{\tau_0}^{-1}\mathbf{v}_j=\epsilon_j\,\upsilon_f(m)+R_2(\epsilon_j)+\epsilon_j\,R_1(\mathbf{v}_j).
\end{equation}
Since $\upsilon_f(m_0)\in \ker\left(I-A_{\tau_0}^{-1}\right)$ (in local coordinates), and on the other hand
$\mathrm{im}\left(I-A_{\tau_0}^{-1}\right)\perp \ker\left(I-A_{\tau_0}^{-1}\right)$ in view of the unitarity
of $A_{\tau_0}$, we deduce by taking the scalar product with $\upsilon_f(m_0)$ on both
sides of (\ref{eqn:v_jA_j}) that
$$
0=\epsilon_j\,\left[\|\upsilon_f(m)\|^2+R_1(\epsilon_j)+R_1(\mathbf{v}_j)\right],
$$
whence that $\epsilon_j=0$ for any $j\gg 0$.

\end{proof}

Thus, assuming that $E$ is a regular value of $f$, $\mathrm{Per}^\mathbb{R}_M(M_E)$
is a discrete even subset of $\mathbb{R}$. It is trivial (that is, reduced to the origin)
precisely when $\phi^M$ has no closed orbits on the level hypersurface $M_E$.
Otherwise, it is infinite countable, and
can be arranged in increasing order $\ldots<\sigma_{-1}<\sigma_0=0<\sigma_1<\ldots$,
with $\sigma_{-j}=-\sigma_j$. 

We shall write $\mathrm{Per}^\mathbb{R}_M(M_E)=\{\sigma_b\}_{b\in \mathcal{B}}$,\label{pageperiods}
with $\mathcal{B}=\{0\}$ or $\mathcal{B}=\mathbb{Z}$ (and depends on $E$).

\subsection{Transversality issues and normal bundles}
\label{sctn:transverse intersections}

As on page \pageref{pagefixedloci}, for any $\sigma\in \mathbb{R}$ 
let us denote by $M(\sigma)\subseteq M$ the fixed locus of $\phi^M_\sigma$.
Then $M(\sigma)$ is a $\phi^M$-invariant closed complex submanifold of $M$.
Therefore, for any $m\in M(\sigma)$ we have 
\begin{equation}
 \label{eqn:hamiltonian in tangent space}
 \upsilon_f(m),\,J_m\upsilon_f(m)\in T_mM(\sigma).
\end{equation}

The tangent and normal spaces to $M(\sigma)$ are given (with $\sigma=\sigma_b$) by
(\ref{eqn:tangent and normal}). 

On the other hand, since $J(\upsilon_f)$ is the Riemannian gradient of $f$, 
at any $m\in M_E=f^{-1}(E)$ with $\mathrm{d}_mf\neq 0$ the Riemannian normal space to $M_E$ is
\begin{equation}
\label{eqn:normal space energy level}
 N_m(M_E)=\mathrm{span}\big(J_m\upsilon_f(m)\big).
\end{equation}

Clearly, $\sigma\in \mathrm{Per}^\mathbb{R}_M(M_E)$ if and only if $M_E\cap M(\sigma)\neq \emptyset$.

\begin{lem}
 \label{lem:transverse intersections}
For any $\sigma\in \mathrm{Per}^\mathbb{R}_M(M_E)$, $M_E$ and $M(\sigma)$ meet transversally.
\end{lem}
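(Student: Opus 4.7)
The plan is to exploit the fact that $M_E$ is a real hypersurface, so transversality at a point $m \in M_E \cap M(\sigma)$ reduces to exhibiting a single tangent vector that lies in $T_m M(\sigma)$ but not in $T_m M_E$. Since $E$ is a regular value, the normal line $N_m(M_E)$ is one-dimensional, and by (\ref{eqn:normal space energy level}) it is spanned by the nonzero vector $J_m \upsilon_f(m)$ (the Riemannian gradient of $f$, up to sign). So everything will come down to producing this single vector inside $T_m M(\sigma)$.

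Fortunately, this is essentially free: since $M(\sigma)$ is a $\phi^M$-invariant complex submanifold of $M$, $J$ preserves $T_m M(\sigma)$, and the orbit through $m$ stays in $M(\sigma)$, so $\upsilon_f(m) \in T_m M(\sigma)$ and hence $J_m \upsilon_f(m) \in T_m M(\sigma)$ too; this is precisely what (\ref{eqn:hamiltonian in tangent space}) records. Combined with the previous observation, $J_m \upsilon_f(m)$ provides a nonzero element of $T_m M(\sigma) \cap N_m(M_E)$, and so
$$
T_m M_E + T_m M(\sigma) \supseteq T_m M_E + \mathrm{span}_{\mathbb{R}}\bigl(J_m \upsilon_f(m)\bigr) = T_m M,
$$
which is transversality at $m$.

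There is no real obstacle here; the only point worth double-checking is that $\upsilon_f(m) \neq 0$ at every $m \in M_E \cap M(\sigma)$, which is immediate from the hypothesis that $E$ is a regular value of $f$ (so $\mathrm{d}_m f \neq 0$, and hence the Hamiltonian vector field does not vanish). Since the argument applies pointwise and uniformly across $M_E \cap M(\sigma)$, the transversality holds globally, completing the proof.
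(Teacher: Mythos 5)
Your proof is correct and takes essentially the same approach as the paper: both arguments rest on the two facts that $J_m\upsilon_f(m)\in T_mM(\sigma)$ (equation (\ref{eqn:hamiltonian in tangent space})) and that this vector spans $N_m(M_E)$ at points where $\mathrm{d}_mf\neq 0$ (equation (\ref{eqn:normal space energy level})). The paper merely states the conclusion in the dual form $N_m(M_E)\cap N_m\big(M(\sigma)\big)=\{\mathbf{0}\}$ rather than $T_mM_E+T_mM(\sigma)=T_mM$, which is the same argument up to taking orthogonal complements.
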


\begin{proof}
 [Proof of Lemma \ref{lem:transverse intersections}]
Suppose $m\in M(\sigma)\cap M_E$. By (\ref{eqn:hamiltonian in tangent space}) and
(\ref{eqn:normal space energy level}), we have
\begin{eqnarray*}
 N_m(M_E)\cap N_m\big(M(\sigma)\big)&=&
\mathrm{span}\big(J_m\upsilon_f(m)\big)\cap T_mM(\sigma)^{\perp_g}\\
&\subseteq&T_mM(\sigma)\cap T_mM(\sigma)^{\perp_g}=\{\mathbf{0}\}.
\end{eqnarray*}
Here $\perp_g$ denotes the Riemannian orthocomplement.
\end{proof}

Thus $M_E(\sigma)=:M_E\cap M(\sigma)$ is $1$-codimensional submanifold of $M(\sigma)$,
and at any $m\in M_E(\sigma)$ we have
$T_mM_E(\sigma)=T_mM_E\cap T_mM(\sigma)$. To be more explicit, let $R_m,\,S_m\subseteq T_mM$
be as in (\ref{eqn:RmSm}); then
\begin{equation}
 \label{eqn:decomposition}
T_mM_E=\mathrm{span}_\mathbb{R}\big(\upsilon_f(m) \big)\oplus S_m,\,\,\,\,\,
T_m M(\sigma)=\mathrm{span}_\mathbb{C}\big(\upsilon_f(m)\big)\oplus 
\widetilde{T}_mM(\sigma),
\end{equation}
where $\widetilde{T}_mM(\sigma)=:S_m\cap T_m M(\sigma)$. 
Thus, with notation (\ref{eqn:defn of MEsigma}),
\begin{equation}
 \label{eqn:M_E(sigma)}
 T_mM_E(\sigma)=\mathrm{span}_\mathbb{R}\big(\upsilon_f(m)\big)\oplus 
\widetilde{T}_mM(\sigma).
\end{equation}

For every $b\in \mathcal{B}$, 
as on page \pageref{page connected components Msigmabl}
let $M(\sigma_b)_l$, $l=1,\ldots,n_b$, be
the connected components of $M(\sigma_b)$, 
For each $l$, let $d_{bl}$ be the complex dimension of $M(\sigma_b)_l$,
and $c_{bl}=d-d_{bl}$ its complex codimension in $M$.
Then by transversality, $M_E(\sigma_b)_l=:M_E\cap M(\sigma_b)_l$ is a submanifold of $M$, 
and if non-empty it has
(real) dimension $2d_{bl}-1$.

For any $m\in M_E(\sigma_b)_l$, in view of transversality the normal space
to $M_E(\sigma_b)_l$ at $m$ is given by the orthogonal direct sum 
\begin{equation}
 \label{eqn:normal space bl}
 N_m\big(M_E(\sigma_b)_l\big)=\mathrm{span}_\mathbb{R}\big(J_m\upsilon_f(m)\big)
 \oplus \mathrm{im}\left(\mathrm{d}\phi^M_{\sigma_b}-\mathrm{id}_{T_mM}\right).
\end{equation}

Now for any $m_0\in M_E(\sigma_b)_l$ we can find an open neighborhood
$U\subseteq M$ of $m$ and a smoothly varying family of geodesic local coordinates on
$M$ centered at points $m\in U$, as in (\ref{eqn:moving HLC}). We can also find a local 
$\mathcal{C}^\infty$ unitary trivialization
of the normal bundle $N_m\big(M_E(\sigma_b)_l\big)\cong \mathbb{R}\oplus\mathbb{C}^{c_{bl}}\cong 
\mathbb{R}^{1+2c_{bl}}$.
In terms of these data, we obtain a local parametrization of a tubular neighborhood of $M_E(\sigma_b)_l$, 
and we shall write this additively in the form \label{page tubular neighborhood}
\begin{equation}
 \label{eqn:local parametrization}
 (m,\lambda,\mathbf{n}):
U'\times (-\delta,\delta)\times B_{2c_{bl}}(\mathbf{0},\delta)\mapsto m+\left(\lambda\,\frac{J_m\upsilon_f(m)}{\|\upsilon_f(m)\|}+
\mathbf{n}\right),
\end{equation}
where $U'$ is an open neighborhood of $m_0$ in $M_E(\sigma_b)_l$, $\delta>0$ is suitably small,
and $\mathbf{n}\in \mathbb{R}^{2c_{bl}}$
is identified with its image in $\mathrm{im}\left(\mathrm{id}_{T_mM}-\mathrm{d}_m\phi^M_\sigma\right)$.


\begin{rem}
 \label{rem:orthogonal decomposition}
In the notation of Definition \ref{defn:tvh components}, in (\ref{eqn:local parametrization}) we have
$\mathbf{n}=\mathbf{n}_\mathrm{h}$.
\end{rem}

\subsection{Szeg\"{o} kernel asymptotics}
\label{sctn:HLC}

It is in order to make some recalls on the off-diagonal scaling asymptotics
of the equivariant Szeg\"{o} kernels $\Pi_k$, expressed in HLC
(\cite{bsz1}, \cite{bsz2}, \cite{sz}).
We refer in particular to \cite{sz} for a detailed discussion and definition of Heisenberg local coordinates, which we have
touched upon on page \pageref{pageHLC}. As is well-known, alternative approaches to the general theme
of near-diagonal asymptotics of Bergman-Szeg\"{o} kernels have been developed by other authors; see for instance 
\cite{mz}, \cite{mm1},
\cite{mm2}, \cite{ch2}, and references therein.

To begin with, given any fixed $C>0$ and $\epsilon\in (0,1/6)$,
we have the following well-known rapid decrease away from slowly shrinking neighborhoods of the inverse image in
$X\times X$ of the diagonal in $M\times M$:

\begin{prop}
 \label{prop:rapid decrease}
Uniformly for
$\mathrm{dist}_M(m_x,m_y)\ge C\,k^{\epsilon-1/2}$, we have $$\Pi_k(x,y)=O\left(k^{-\infty}\right).$$
\end{prop}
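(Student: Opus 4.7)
The plan is to represent $\Pi_k(x,y)$ as a complex oscillatory integral with large parameter $k$ via the Boutet de Monvel--Sjöstrand parametrix for $\Pi$ (from \cite{bs}), and then extract exponential decay in the off-diagonal regime by the complex stationary phase method. First I would invoke the BMS description: modulo a smoothing kernel,
$$
\Pi(x',y) = \int_0^{+\infty} e^{it\,\psi(x',y)}\, s(x',y,t)\, dt,
$$
where $\psi$ is a smooth complex phase with $\Im\psi\ge 0$ satisfying $\Im\psi(x',y)\ge c\,\mathrm{dist}_X(x',y)^2$ near the diagonal (transversally to the $S^1$-orbit direction, where $\Re\psi$ is non-stationary), and $s$ is a classical symbol of order $d$ in $t$ supported in a small conic neighborhood of $\Sigma$.

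Next, extract $\Pi_k$ by integrating against the $S^1$-character,
$$
\Pi_k(x,y) = \frac{1}{2\pi}\int_{-\pi}^{\pi} e^{-ik\theta}\, \Pi(r_\theta x, y)\, d\theta,
$$
substitute the parametrix, and rescale $t = k u$ to obtain, modulo $O(k^{-\infty})$,
$$
\Pi_k(x,y) \sim \frac{k}{2\pi}\int_{-\pi}^{\pi}\!\int_0^{+\infty} e^{ik\,\Phi(\theta,u;x,y)}\; s(r_\theta x, y, ku)\, du\, d\theta,
$$
where the complex phase is $\Phi(\theta,u;x,y) = -\theta + u\,\psi(r_\theta x, y)$, with $\Im\Phi = u\,\Im\psi(r_\theta x, y) \ge 0$. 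On the region where $\Re\Phi$ is non-stationary in $(\theta,u)$, and on the region where $u$ is outside a compact neighborhood of the critical value $u_* \sim 1$, repeated integration by parts in the real part of the phase, together with the symbol estimates on $s$, yield a contribution of size $O(k^{-\infty})$. On the complementary region, where the complex stationary phase lemma of Melin--Sjöstrand applies, the imaginary part $\Im\Phi \ge c\,u\,\mathrm{dist}_X(r_\theta x, y)^2 \ge c'\,\mathrm{dist}_M(m_x, m_y)^2$ provides a pointwise Gaussian damping factor of order $\exp\!\big(-c'\,k\,\mathrm{dist}_M(m_x, m_y)^2\big)$, multiplied by a polynomial factor in $k$ coming from the symbol asymptotics.

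Imposing the hypothesis $\mathrm{dist}_M(m_x, m_y) \ge C\,k^{\epsilon - 1/2}$ turns this damping into $\exp(-c''\,C^2\,k^{2\epsilon})$, which decays faster than any inverse power of $k$ since $\epsilon > 0$, establishing the claim. The main technical hurdle is the bookkeeping of uniformity in $(x,y)$ and the precise handling of the amplitude $s(r_\theta x, y, ku)$ under the rescaling $t\mapsto ku$ across the various regions of the $(\theta,u)$-integration; this is by now a standard calculation, carried out in detail in the references \cite{bsz1}, \cite{bsz2}, \cite{sz}, from which the stated uniform off-diagonal rapid decrease follows.
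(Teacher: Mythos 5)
Your argument is correct and is exactly the route the paper itself indicates: the paper offers no detailed proof of Proposition \ref{prop:rapid decrease}, stating only that it follows from the FIO representation of $\Pi$ in \cite{bs} (with sharper statements in \cite{christ}), and your sketch — circle-averaging the Boutet de Monvel--Sj\"{o}strand parametrix, rescaling $t=ku$, integrating by parts off the critical set, and using $\Im\psi\gtrsim \mathrm{dist}^2$ to get the damping $e^{-c\,k\,\mathrm{dist}_M(m_x,m_y)^2}\le e^{-c\,C^2k^{2\epsilon}}$ — is precisely that standard derivation, as carried out in \cite{bsz1}, \cite{bsz2}, \cite{sz}. No gap to report.
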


This may be derived directly from the representation of $\Pi$ as in FIO in
\cite{bs}, but see also the discussion in \cite{christ} for much more precise statements.

To describe the near-diagonal asymptotics, 
let us consider $x\in X$ and a HLC system centered at $x$; any $y$ close to the $S^1$-orbit of $x$ may be written in the form
$y=x+(\theta,\mathbf{v})$. Let $\psi_2$ be as in Definition \ref{defn:psi2}.
Then for fixed $\epsilon\in (0,1/6)$ and $C>0$ the following asymptotic expansion holds
for $k\rightarrow+\infty$ in the range $\|\mathbf{v}_1\|,\,\|\mathbf{v}_2\|\le C\,k^{\epsilon}$:
\begin{eqnarray}
 \label{eqn:scaling asymptotics rescaled}
 \lefteqn{\Pi_k\left(x+\left(\theta_1,\frac{\mathbf{v}_1}{\sqrt{k}}\right),
 x+\left(\theta_2,\frac{\mathbf{v}_2}{\sqrt{k}}\right)\right)}\\
 &\sim&\left(\frac{k}{\pi}\right)^{d}\,e^{ik(\theta_1-\theta_2)+\psi_2(\mathbf{v}_1,\mathbf{v}_2)}
 \cdot\left[1+\sum_{j=1}^{+\infty}k^{-j/2}\,P_j(x;\mathbf{v}_1,\mathbf{v}_2)\right], \nonumber
\end{eqnarray}
where $\psi_2$ is as in Definition \ref{defn:psi2}, and $P_j(x;\cdot,\cdot)$ is a polynomial,
with coefficients depending $\mathcal{C}^\infty$-wise on $x$, of degree $\le 3j$ and parity $j$
(see for instance the first two lines of (81) in \cite{sz}).
If we write $P_j$ as the sum of its homogeneous components, we obtain
\begin{eqnarray}
 \label{eqn:scaling asymptotics rescaled 1}
 \lefteqn{\Pi_k\left(x+\left(\theta_1,\frac{\mathbf{v}_1}{\sqrt{k}}\right),
 x+\left(\theta_2,\frac{\mathbf{v}_2}{\sqrt{k}}\right)\right)}\\
 &\sim&\left(\frac{k}{\pi}\right)^{d}\,e^{ik(\theta_1-\theta_2)+\psi_2(\mathbf{v}_1,\mathbf{v}_2)}
 \cdot\left[1+\sum_{j=1}^{+\infty}k^{-j/2}\,\sum_{l=0}^{3j}Q_{j,l}(x;\mathbf{v}_1,\mathbf{v}_2)\right], \nonumber
\end{eqnarray}
where now $Q_l(x,\cdot,\cdot)$ is a homogeneous polynomial of degree $l$, and vanishes unless $l-j$ is even.

Passing to unrescaled coordinates, we multiply $\mathbf{v}_j$ by $\sqrt{k}$, and obtain
\begin{eqnarray}
 \label{eqn:scaling asymptotics rescaled 2}
 \lefteqn{\Pi_k\big(x+\left(\theta_1,\mathbf{v}_1\big),
 x+\left(\theta_2,\mathbf{v}_2\right)\right)}\\
 &\sim&\left(\frac{k}{\pi}\right)^{d}\,e^{k[i(\theta_1-\theta_2)+\psi_2(\mathbf{v}_1,\mathbf{v}_2)]}
 \cdot\left[1+\sum_{j=1}^{+\infty}\,\sum_{l=0}^{3j}k^{(l-j)/2}Q_{j,l}(x;\mathbf{v}_1,\mathbf{v}_2)\right], \nonumber
\end{eqnarray}
where only integer powers of $k$ contribute to the inner summation. 
The asymptotic expansion (\ref{eqn:scaling asymptotics rescaled 2}) holds in the range
$\|\mathbf{v}_j\|\le C\,k^{\epsilon-1/2}$.

It is convenient to rearrange the inner summands 
in (\ref{eqn:scaling asymptotics rescaled 2}) in the following 
manner. Let us set, for $j-l$ even, $b=:(l-j)/2$; then $b$ is an integer, and $\lceil -j/2\rceil \le b\le j$. Thus we have
\begin{eqnarray}
 \label{eqn:scaling asymptotics rescaled 3}
 \lefteqn{\Pi_k\big(x+\left(\theta_1,\mathbf{v}_1\big),
 x+\left(\theta_2,\mathbf{v}_2\right)\right)}\\
 &\sim&\left(\frac{k}{\pi}\right)^{d}\,e^{k[i(\theta_1-\theta_2)+\psi_2(\mathbf{v}_1,\mathbf{v}_2)]}
 \cdot\left[1+\sum_{j=1}^{+\infty}\,\sum_{b=\lceil -j/2\rceil}^{j}k^{b}Q_{j,j+2b}(x;\mathbf{v}_1,\mathbf{v}_2)\right] \nonumber\\
&=&\left(\frac{k}{\pi}\right)^{d}\,e^{k[i(\theta_1-\theta_2)+\psi_2(\mathbf{v}_1,\mathbf{v}_2)]}
 \cdot\left[1+\sum_{j=1}^{+\infty}\,A_j(k,x;\mathbf{v}_1,\mathbf{v}_2)\right], \nonumber
\end{eqnarray}
where
\begin{equation}
 \label{eqn:Aj definition}
A_j(k,x;\mathbf{v}_1,\mathbf{v}_2)=\sum_{b=\lceil -j/2\rceil}^{j}k^{b}Q_{j,j+2b}(x;\mathbf{v}_1,\mathbf{v}_2).
\end{equation}

It follows from the previous discussion, and may readily checked directly, 
that (\ref{eqn:scaling asymptotics rescaled 3}) is an asymptotic expansion
for $k\rightarrow+\infty$, in the range $\|\mathbf{v}_j\|\le C\,k^{\epsilon-1/2}$ for $\epsilon\in (0,1/6)$.
Indeed, we have $|A_j|\le C_j\,k^{-3\delta j}$ for $\delta=1/6-\epsilon$.

\subsection{An integral formula for Gutzwiller-T\"{o}plitz kernels}

Equation (\ref{eqn:spectral unitary}) is a representation of $U(\tau)$ in terms of spectral data; on the other hand,
by definition $U(\tau)$ is a pull-back operator restricted to the Hardy space (recall (\ref{eqn:defn Utau})), and therefore
its Schwartz kernel is also given by
\begin{eqnarray}
\label{eqn:unitary and szego}
 U(\tau)_k(x,y)=\sum_j 
 \,e_{kj}(x_\tau)\cdot\overline{e_{kj}(y)}=\Pi_k(x_\tau,y)\,\,\,\,\,\,\,(x,y\in X).
\end{eqnarray}

Similarly, equation (\ref{eqn:spectral gutzwiller}) is a representation of $\mathcal{G}_k$ in terms of spectral data.
We shall now combine (\ref{eqn:spectral gutzwiller}) and (\ref{eqn:unitary and szego}) to obtain
an integral representation of $\mathcal{G}_k$ in terms of $\Pi_k$.
By definition of Fourier transform, we have
\begin{eqnarray}
 \label{eqn:gutzwiller integral}
\mathcal{G}_k(x,y)&=&\int_{-\infty}^{+\infty}e^{-i\tau kE}\,\chi(\tau)\,\left[\sum_{j=1}^{N_k} e^{i\tau \lambda_{kj}}\,
e_{kj}(x)\cdot \overline{e_{kj}(y)}\right]\,\mathrm{d}\tau\nonumber\\
&=&\int_{-\infty}^{+\infty}\,e^{-i\tau kE}\,\chi(\tau)\,U(\tau)_k(x,y)\,\mathrm{d}\tau\nonumber\\
&=&\int_{-\infty}^{+\infty}\,e^{-i\tau kE}\,\chi(\tau)\,\Pi_k(x_\tau,y)\,\mathrm{d}\tau
\end{eqnarray}

\subsection{A handy distance estimate}

Let $B_r(\mathbf{0},\delta)\subseteq \mathbb{R}^r$ be the open ball of radius $\delta>0$ centered
at the origin. Also, let 
$\langle\cdot,\cdot\rangle_{\mathrm{st}}:\mathbb{R}^r\times\mathbb{R}^r\rightarrow\mathbb{R}$ be the
standard Euclidean product, and let $\|\cdot\|$ be the corresponding norm.

\begin{lem}
 \label{lem:distance estimate local chart}
 Let $(M,g)$ be an $r$-dimensional Riemannian manifold, $m\in M$, and let 
 $$
 \eta:B_r(\mathbf{0},\delta)\rightarrow U=\eta\big(B_r(\mathbf{0},\delta)\big)\subseteq M
 $$ 
 be a local coordinate chart, satisfying
 $\eta(\mathbf{0})=m$ and $\eta^*(g)_\mathbf{0}=\langle\cdot,\cdot\rangle_{\mathrm{st}}$.
 Then, perhaps after passing to  a smaller $\delta$ of $m$ we have 
 $$
 2\,\|\mathbf{w}-\mathbf{w}'\|\ge \mathrm{dist}_M\big(\eta(\mathbf{w}),\eta(\mathbf{w}')\big)\ge \frac{1}{2}\,\|\mathbf{w}-\mathbf{w}'\|
 $$ 
 for any $\mathbf{w},\mathbf{w}'\in B_r(\mathbf{0},\delta)$.
\end{lem}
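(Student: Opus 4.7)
The plan is to compare the pulled-back metric $\tilde g = \eta^*(g)$ on $B_r(\mathbf{0},\delta)$ with the standard Euclidean inner product $\langle\cdot,\cdot\rangle_{\mathrm{st}}$. By hypothesis the two quadratic forms agree at the origin, so by continuity of the metric coefficients I can choose a first radius $\delta' > 0$ on which
$$\tfrac{3}{4}\|\mathbf{v}\| \;\le\; \sqrt{\tilde g_{\mathbf{u}}(\mathbf{v},\mathbf{v})} \;\le\; \tfrac{3}{2}\|\mathbf{v}\|$$
holds for every $\mathbf{u}\in B_r(\mathbf{0},\delta')$ and every $\mathbf{v}\in\mathbb{R}^r$. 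The specific constants $3/4$ and $3/2$ are just convenient numbers strictly inside the interval $(1/2,2)$ demanded by the lemma, leaving slack to absorb any small adjustments later.

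For the upper bound I would join $\mathbf{w},\mathbf{w}'\in B_r(\mathbf{0},\delta)\subseteq B_r(\mathbf{0},\delta')$ by the Euclidean straight segment, which stays in the (convex) ball; its $\tilde g$-length is at most $(3/2)\|\mathbf{w}-\mathbf{w}'\|$, and its push-forward by $\eta$ is a curve in $U$ of the same $g$-length joining $\eta(\mathbf{w})$ to $\eta(\mathbf{w}')$. Since $\mathrm{dist}_M$ is the infimum of $g$-lengths of such competitors, this immediately yields $\mathrm{dist}_M(\eta(\mathbf{w}),\eta(\mathbf{w}'))\le (3/2)\|\mathbf{w}-\mathbf{w}'\|\le 2\|\mathbf{w}-\mathbf{w}'\|$.

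For the lower bound the real issue is that a competitor curve in $M$ need not a priori stay inside $\eta(B_r(\mathbf{0},\delta'))$, so it cannot be pulled back. To handle this I would shrink $\delta$ once more: let $K=\eta\bigl(\overline{B_r(\mathbf{0},\delta'/2)}\bigr)$, let $d_0>0$ be the $g$-distance from the compact set $\eta\bigl(\overline{B_r(\mathbf{0},\delta'/4)}\bigr)$ to $M\setminus\mathrm{int}(K)$, and set $\delta=\min(\delta'/4,\,d_0/4)$. By the upper bound already in hand, $\mathrm{dist}_M(\eta(\mathbf{w}),\eta(\mathbf{w}'))<d_0$ whenever $\mathbf{w},\mathbf{w}'\in B_r(\mathbf{0},\delta)$, so any competitor curve of $g$-length less than $d_0$ starting at $\eta(\mathbf{w})$ must remain in $K$ and therefore pull back to a curve $\widetilde{\gamma}$ in $B_r(\mathbf{0},\delta'/2)$. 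The $g$-length of the original equals the $\tilde g$-length of $\widetilde{\gamma}$, which by the lower comparison is at least $(3/4)$ times the Euclidean length of $\widetilde{\gamma}$, and the latter is at least $\|\mathbf{w}-\mathbf{w}'\|$. Taking the infimum over such competitors yields $\mathrm{dist}_M(\eta(\mathbf{w}),\eta(\mathbf{w}'))\ge (3/4)\|\mathbf{w}-\mathbf{w}'\|\ge (1/2)\|\mathbf{w}-\mathbf{w}'\|$. The only genuine obstacle, and a standard one, is this confinement step that forces minimizing sequences to stay in the chart; once it is in place the estimate is just a quantitative expression of the fact that $\tilde g\to\langle\cdot,\cdot\rangle_{\mathrm{st}}$ as one approaches the origin.
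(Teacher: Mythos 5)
Your proof is correct and follows the same overall strategy as the paper's: compare $\eta^*(g)$ to the Euclidean metric near the origin, use the straight segment for the upper bound, and for the lower bound solve the confinement problem so that competitor curves can be pulled back. The one point of divergence is how confinement is achieved: the paper restricts to $\mathbf{w},\mathbf{w}'\in B_r(\mathbf{0},\delta/3)$ and observes directly that any escaping curve must traverse the annulus between radii $\delta/3$ and $\delta$, so its pulled-back length is already at least $\tfrac{1}{2}(\delta-\delta/3)\ge\tfrac{1}{2}\|\mathbf{w}-\mathbf{w}'\|$; you instead shrink $\delta$ using the positive distance $d_0$ from a compact inner set to the complement of the chart image and use the already-established upper bound to force short competitors to stay inside. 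Both are standard and correct; the paper's radial estimate is slightly more self-contained (it never invokes compactness of image sets or the already-proved upper bound), while yours is perhaps more robust in that it makes no use of the geometry of Euclidean balls beyond nestedness.
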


Thus, $\mathrm{d}_\mathbf{0}\eta:(\mathbb{R}^r,\langle\cdot,\cdot\rangle_{\mathrm{st}})
\rightarrow (T_mM,g_m)$ is required to be an isometry.

\begin{proof}
 [Proof of Lemma \ref{lem:distance estimate local chart}]
 Let us denote by $\|\cdot\|_p^*$ the norm at a point $p\in B_r(\mathbf{0},\delta)$ associated to the pulled-back
 Riemannian metric $\eta^*(g)$. Perhaps after passing to a smaller $\delta$ we may assume, by continuity,
 that 
 $2\,\|\cdot\|\ge \|\cdot\|_p^*\ge (1/2)\,\|\cdot\|$ for any $p\in B_r(\mathbf{0},\delta)$.
 
 If $\gamma:[a,b]\rightarrow B_r(\mathbf{0},\delta)$ is any piecewise smooth curve, let us denote
 by  $\ell^*(\gamma)$ its length with
 respect to $\eta^*(g)$, which is the same as the length $\ell(\gamma')$ of $\gamma'=:\eta\circ \gamma:[a,b]\rightarrow M$ 
 with respect to $g$. If $\mathbf{w},\mathbf{w}'\in B_r(\mathbf{0},\delta)$ and $\gamma (a)=\mathbf{w}$,
 $\gamma (b)=\mathbf{w}'$, then 
 \begin{eqnarray}
  \ell^*(\gamma)&=&
  \int_a^b\left\|\dot{\gamma}(\tau)\right\|^*_{\gamma(\tau)}\,\mathrm{d}\tau
  \ge \frac{1}{2}\,\int_a^b\left\|\dot{\gamma}(\tau)\right\|\,\mathrm{d}\tau\nonumber\\
 &\ge& 
  \frac{1}{2}\,\left\|\int_a^b\,\dot{\gamma}(\tau)\,\mathrm{d}\tau\right\| =\frac{1}{2}\,\left\|\mathbf{w}-\mathbf{w}'\right\|.
 \end{eqnarray}

Suppose now $\mathbf{w},\mathbf{w}'\in B_r(\mathbf{0},\delta/3)$. If $\gamma':[a,b]\rightarrow M$ is any piecewise smooth
curve joining $\gamma(\mathbf{w})$ and $\gamma(\mathbf{w}')$, we can distinguish two cases.

If $\gamma'([a,b])\subseteq U$, the previous argument applied to $\gamma=:\eta^{-1}\circ \gamma'$ implies
that $\ell (\gamma')\ge (1/2)\,\left\|\mathbf{w}-\mathbf{w}'\right\|$, where $\ell$ is the length with respect to $g$.

If $\gamma'([a,b])\not\subseteq U$, then applying the same argument to the portion of the curve preceding the 
first boundary point we see that
$\ell(\gamma')\ge (1/2)\big(\delta-(\delta/3)\big)=\delta/3\ge (1/2)\,\|\mathbf{w}-\mathbf{w}'\|$.

Thus at any rate, passing to inf over $\gamma$, we conclude that the right inequality in the statement holds for
any $\mathbf{w},\mathbf{w}'\in B_r(\mathbf{0},\delta/3)$.

To prove the left inequality, consider $\mathbf{w},\mathbf{w}'\in B_r(\mathbf{0},\delta)$
and the curve $\gamma:[0,1]\rightarrow B_r(\mathbf{0},\delta)$ given by
$\gamma(t)=(1-t)\,\mathbf{w}+t\,\mathbf{w}'$; define $\gamma'=:\eta\circ \gamma$.
Then
\begin{eqnarray*}
 \left\|\mathbf{w}-\mathbf{w}'\right\|&=&\int_0^1\left\|\dot{\gamma}(\tau)\right\|\,\mathrm{d}\tau\ge 
 \frac{1}{2}\,\int_0^1\left\|\dot{\gamma}(\tau)\right\|^*_{\gamma(\tau)}\,\mathrm{d}\tau\\
 &=&\frac{1}{2}\,\ell^*(\gamma)=\frac{1}{2}\,\ell(\gamma')\ge 
 \frac{1}{2}\,\mathrm{dist}_M\big(\eta(\mathbf{w}),\eta(\mathbf{w}')\big).
\end{eqnarray*}

\end{proof}

\section{Proof of Theorem \ref{thm:rapid decrease}}

\begin{proof}
 [Proof of Theorem \ref{thm:rapid decrease}]
 By Proposition \ref{prop:rapid decrease}, there exist constants $C_j>0$,
 $j=1,2,\ldots$, such that for any choice of a sequence
 $(x_k,y_k)\in X\times X$ with 
 $$\mathrm{dist}_M\left(\pi(y_k),\pi(x_k)^\chi\right)\ge C\,k^{\epsilon-1/2}$$
 we have 
 \begin{equation}
  \label{eqn:rapid decay szego tau}
\big|\Pi_k(x_\tau,y)\big|<C_j\,k^{-j}
 \end{equation}
 for $k\ge 1$, for any $\tau\in \mathrm{supp}(\chi)$.
 Since $\chi$ is compactly supported, we then deduce from (\ref{eqn:gutzwiller integral}) 
and (\ref{eqn:rapid decay szego tau}) that 
 $\mathcal{G}_k(x_k,y_k)=O\left(k^{-\infty}\right)$ for $k\rightarrow+\infty$.

We are then reduced to assuming that 
$$
 \mathrm{dist}_M\left(n_k,m_k^\chi\right)\le C\,k^{\epsilon-\frac{1}{2}},
 $$
where we have set $m_k=:\pi(x_k)$, $n_k=:\pi(y_k)$.

To begin with, let us fix some small $\epsilon_1>0$ and consider a bump function
$\varrho_1\in\mathcal{C}^\infty_0(\mathbb{R})$ such that $\varrho_1\equiv 1$ on $(-\epsilon_1,\epsilon_1)$
and $\varrho_1\equiv 0$ on $\mathbb{R}\setminus (-2\epsilon_1,2\epsilon_1)$.
Then, by the previous considerations, 
\begin{eqnarray}
 \label{eqn:gutzwiller integral 1}
\mathcal{G}_k(x,y)\sim
\int_{-\infty}^{+\infty}\,e^{-i\tau kE}\,\chi(\tau)\,\varrho_1\big(\mathrm{dist}_M(m_{k\tau},n_{k})\big)\,
\Pi_k(x_{k\tau},y_k)\,\mathrm{d}\tau,
\end{eqnarray}
where $\sim$ means \lq has the same asymptotics as\rq.

Since our analysis is local in the neighborhood of $y_k$, we can find a (smoothly varying) HLC system
centered at $y_k$, and in terms of the latter we shall write
$$
x_{k\tau}=y_k+\big(\theta_k(\tau),\mathbf{v}_k(\tau)\big),\,\,\,\,\,\,
m_{k\tau}=n_k+\mathbf{v}_k(\tau),
$$
where the second equality should be interpreted in terms of the associated adapted local coordinates on $M$ 
(see the discussion on page \pageref{pageHLC}). Since HLC are isometric at the origin, we have for sufficiently
small $\epsilon_1$ that
\begin{equation}
 \label{eqn:estimate distance norm}
\frac{1}{2}\,\|\mathbf{v}_k(\tau)\|\le \mathrm{dist}_M(m_{k\tau},n_k)\le 2\,\|\mathbf{v}_k(\tau)\|.
\end{equation}
In particular, we have 
\begin{equation}
 \label{eqn:bound on v_k}
\|\mathbf{v}_k(\tau)\|\le 2C\,k^{\epsilon-1/2}.
\end{equation}

Let us then choose another bump function $\varrho\in \mathcal{C}^\infty_0\left(\mathbb{R}^{2d}\right)$, 
such that $\varrho\equiv 1$ on $B_{2d}(\mathbf{0},2C)$
and $\varrho\equiv 0$ on $\mathbb{R}\setminus B_{2d}(\mathbf{0},3C)$. 
If $\varrho \left(k^{1/2-\epsilon}\,\mathbf{v}_k(\tau)\right)\neq 1$, then
$\|\varrho_k(\tau)\|\ge 2C\,k^{\epsilon-1/2}$, and so by (\ref{eqn:estimate distance norm}) we have
\begin{equation}
 \label{eqn:estimate distance norm 1}
\mathrm{dist}_M(m_{k\tau},n_k)\ge C\,k^{\epsilon-1/2}.
\end{equation}

Again, we conclude therefore that $\Pi_k(x_{k\tau},y_k)=O\left(k^{-\infty}\right)$ in the range where 
$\varrho \left(k^{1/2-\epsilon}\,\mathbf{v}_k(\tau)\right)\neq 1$. This implies
\begin{eqnarray}
 \label{eqn:gutzwiller integral 2}
\lefteqn{\mathcal{G}_k(x,y)}\\
&\sim&
\int_{-\infty}^{+\infty}\,e^{-i\tau kE}\,\chi(\tau)\,\varrho_1\big(\mathrm{dist}_M(m_{k\tau},n_{k})\big)\,
\varrho \left(k^{1/2-\epsilon}\,\mathbf{v}_k(\tau)\right)\,
\Pi_k(x_{k\tau},y_k)\,\mathrm{d}\tau\nonumber\\
&=&
\int_{-\infty}^{+\infty}\,e^{-i\tau kE}\,\chi(\tau)\,
\varrho \left(k^{1/2-\epsilon}\,\mathbf{v}_k(\tau)\right)\,
\Pi_k\Big(y_k+\big(\theta_k(\tau),\mathbf{v}_k(\tau)\big),y_k\Big)\,\mathrm{d}\tau.\nonumber
\end{eqnarray}
The latter equality follows for $k\gg 0$ because $\varrho \left(k^{1/2-\epsilon}\,\mathbf{v}_k(\tau)\right)\neq 0$
implies, in view of (\ref{eqn:estimate distance norm}), that
$$
\mathrm{dist}_M(m_{k\tau},n_k)\le 2\,\|\mathbf{v}_k(\tau)\|\le 6\,C\,k^{\epsilon-1/2}\ll \epsilon_1\,\,\,
\Longrightarrow\,\,\,\varrho_1\big(\mathrm{dist}_M(m_{k\tau},n_{k})\big)=1.
$$

Now we have, in view of (\ref{eqn:scaling asymptotics rescaled 3}),
\begin{eqnarray}
\label{eqn:scaling asymptotics rescaled 4}
 \lefteqn{\Pi_k\Big(y_k+\big(\theta_k(\tau),\mathbf{v}_k(\tau)\big),y_k\Big)} \\
&\sim&\left(\frac{k}{\pi}\right)^{d}\,e^{ik[\theta_k(\tau)+\frac{i}{2}\,\|\mathbf{v}_k(\tau)\|^2]}
 \cdot\left[1+\sum_{j=1}^{+\infty}\,A_j(k,x;\mathbf{v}_k(\tau),\mathbf{0})\right]. \nonumber
\end{eqnarray}

Inserting (\ref{eqn:scaling asymptotics rescaled 4}) in (\ref{eqn:gutzwiller integral 2}), we obtain
\begin{eqnarray}
 \label{eqn:gutzwiller integral 3}
\mathcal{G}_k(x_k,y_k)
&\sim&\left(\frac{k}{\pi}\right)^{d}\,
\int_{-\infty}^{+\infty}\,e^{ik\Gamma(x_k,y_k,\tau)}\,\chi(\tau)\,
\varrho \left(k^{1/2-\epsilon}\,\mathbf{v}_k(\tau)\right)\nonumber\\
&&\cdot 
\left[1+\sum_{j=1}^{+\infty}\,A_j(k,x;\mathbf{v}_k(\tau),\mathbf{0})\right]\,\mathrm{d}\tau,
\end{eqnarray}
where 
\begin{equation}
 \label{eqn:phase}
\Gamma(x_k,y_k,\tau)=:\theta_k(\tau)+\frac{i}{2}\,\|\mathbf{v}_k(\tau)\|^2-\tau\,E,
\end{equation}
and the asymptotic expansion can be integrated term by term.

To proceed further, we need to make $\Gamma$ more explicit. Let us fix $\tau_0$ in the support of the integrand,
and make the change of variables $\tau\rightsquigarrow \tau_0+\tau$ in a small neighborhood of $\tau_0$, so that
now $\tau\sim 0$. 
Then by Corollary 2.2 of \cite{pao_ijm_2012} we have
\begin{eqnarray*}
 \lefteqn{y_k+\big(\theta_k(\tau_0+\tau),\mathbf{v}_k(\tau_0+\tau)\big)
=\phi^X_{-\tau-\tau_0}(x_k)}\\
&=&\phi^X_{-\tau}\left(\phi^X_{-\tau_0}(x_k)\right)=\phi^X_{-\tau}\Big(y_k+\big(\theta_k(\tau_0),\mathbf{v}_k(\tau_0)\big)\Big)\\
&=&y_k+\Big(\theta_k(\tau_0)+\tau\,f(n_k)+\tau\,\omega_{n_k}\big(\upsilon_f(n_k),\mathbf{v}_k(\tau_0)\big)+R_3\big(\tau,\mathbf{v}_k(\tau_0)\big),\\
&&\mathbf{v}_k(\tau_0)-\tau\,\upsilon_f(n_k)+R_2\big(\tau,\mathbf{v}_k(\tau_0)\big)\Big).
\end{eqnarray*}

Therefore, we obtain
\begin{eqnarray}
 \label{eqn:phase Gamma}
\Gamma(x_k,y_k,\tau_0+\tau)&=&\theta_k(\tau_0)+\tau\,\big[f(n_k)+\omega_{n_k}\big(\upsilon_f(n_k),\mathbf{v}_k(\tau_0)\big)-E\big]
+R_3\big(\tau,\mathbf{v}_k(\tau_0)\big)\nonumber\\
&&+\frac{i}{2}\,\big\|\mathbf{v}_k(\tau_0)-\tau\,\upsilon_f(n_k)+R_2\big(\tau,\mathbf{v}_k(\tau_0)\big)\big\|^2\nonumber\\
&=&\theta_k(\tau_0)+\frac{i}{2}\,\big\|\mathbf{v}_k(\tau_0)\big\|^2\nonumber\\
&&+\tau\,\Big[f(n_k)+\omega_{n_k}\big(\upsilon_f(n_k),\mathbf{v}_k(\tau_0)\big)-E-i\,\big\langle\mathbf{v}_k(\tau_0)\upsilon_f(n_k)\big\rangle\Big]
\nonumber\\
&&+\frac{i}{2}\,\tau^2\,\big\|\upsilon_f(n_k)\big\|^2+R_3\big(\tau,\mathbf{v}_k(\tau_0)\big).
\end{eqnarray}
Since the $R_3$ term on the first line is real, the first equality in (\ref{eqn:phase Gamma}) implies $\Im \Gamma \ge 0$.

In view of (\ref{eqn:Aj definition}), applied with $\tau_0+\tau$ in place of $\tau$,
the $j$-th summand in the asymptotic expansion for the amplitude in the integrand in (\ref{eqn:gutzwiller integral 3})
is given by 
\begin{eqnarray}
 \label{eqn:jth summand expansion}
\lefteqn{\mathcal{A}_j(x_k,y_k,\tau_0+\tau)}\\
&=:&\left(\frac{k}{\pi}\right)^{d}\,\chi(\tau_0+\tau)\,\varrho \left(k^{1/2-\epsilon}\,\mathbf{v}_k(\tau_0+\tau)\right)
\,A_j(k,x;\mathbf{v}_k(\tau_0+\tau),\mathbf{0})\nonumber\\
&=&\left(\frac{k}{\pi}\right)^{d}\,\chi(\tau_0+\tau)\,\varrho \left(k^{1/2-\epsilon}\,\Big[\mathbf{v}_k(\tau_0)-\tau\,\upsilon_f(n_k)+R_2\big(\tau,\mathbf{v}_k(\tau_0)\big)\Big]\right)
\nonumber\\
&&\cdot\sum_{b=\lceil -j/2\rceil}^{j}k^{b}Q_{j+2b}\Big(y_k;
\mathbf{v}_k(\tau_0)-\tau\,\upsilon_f(n_k)+R_2\big(\tau,\mathbf{v}_k(\tau_0)\big),\mathbf{0}\Big).\nonumber
\end{eqnarray}
This is defined for $j\ge 1$, but for convenience we shall extend to $j=0$ by summing over $j\ge 0$ with $A_0=1$ in (\ref{eqn:gutzwiller integral 3}).

In view of the factor $k^{1/2-\epsilon}$ appearing in (\ref{eqn:jth summand expansion}), and the considerations following
(\ref{eqn:Aj definition}), we have for $a\ge 0$
\begin{eqnarray}
 \label{eqn:estimate derivative amplitude}
\left.\frac{\partial^a \mathcal{A}_j}{\partial\tau^a}(x_k,y_k,\tau)\right|_{\tau=\tau_0}&=&
\left.\frac{\partial^a \mathcal{A}_j}{\partial\tau^a}(x_k,y_k,\tau_0+\tau)\right|_{\tau=0}\nonumber\\
&=&O\left(k^{d-3j(1/6-\epsilon)+a(1/2-\epsilon)}\right).
\end{eqnarray}
On the other hand, given (\ref{eqn:phase Gamma}) we have
\begin{eqnarray}
 \label{eqn:derivative amplitude}
\lefteqn{\left.\frac{\partial}{\partial\tau}\Gamma(x_k,y_k,\tau)\right|_{\tau=\tau_0}=
\left.\frac{\partial}{\partial\tau}\Gamma(x_k,y_k,\tau_0+\tau)\right|_{\tau=0}}\nonumber\\
&=&f(n_k)+\omega_{n_k}\big(\upsilon_f(n_k),\mathbf{v}_k(\tau_0)\big)-E-i\,\big\langle\mathbf{v}_k(\tau_0)\upsilon_f(n_k)\big\rangle
+R_2\big(\mathbf{v}_k(\tau_0)\big)\nonumber\\
&=&f(n_k)-E+R_1\big(\mathbf{v}_k(\tau_0)\big).
\end{eqnarray}

Furthermore, in view of (\ref{eqn:bound on v_k}), for $k\gg 0$ 
the $R_1$ term on the last line of (\ref{eqn:derivative amplitude}) is bounded 
by $C_1C\,k^{\epsilon-1/2}$, where $C_1$ only depends on $f$ and $E$.

If then $\big|f(n_k)-E\big|\ge 2C_1C\,k^{\epsilon-1/2}$, we conclude from (\ref{eqn:derivative amplitude}) that 
for any $\tau\in \mathrm{supp}(\chi)$ and $k\gg 0$ one has
\begin{eqnarray}
 \label{eqn:derivative amplitude bound}
\left|\frac{\partial}{\partial\tau}\Gamma(x_k,y_k,\tau)\right|\ge C_1C\,k^{\epsilon-1/2}.
\end{eqnarray}
Let us consider the $k$-dependent differential operator on $\mathbb{R}$
$$
L_k=:\frac{1}{\partial_\tau\Gamma(x_k,y_k,\tau)}\,\frac{\partial}{\partial\tau}=
\frac{1}{f(n_k)-E+R_1\big(\mathbf{v}_k(\tau)\big)}\,\frac{\partial}{\partial\tau},
$$
where of course $\partial_\tau\Gamma=\partial\Gamma/\partial\tau$.

Then 
\begin{equation}
 \label{eqn:key step integration by parts}
e^{ik\Gamma(x_k,y_k,\tau)}=\frac{1}{ik}\,L_k\left(e^{ik\Gamma(x_k,y_k,\tau)}\right).
\end{equation}
Iterating $r$ times integration by parts, 
we obtain for the $j$-th summand in (\ref{eqn:gutzwiller integral 3}) (with $A_0=1$)
\begin{eqnarray}
 \label{eqn:integration by parts}
\lefteqn{
\int_{-\infty}^{+\infty}\,e^{ik\Gamma(x_k,y_k,\tau)}\,\mathcal{A}_j(x_k,y_k,\tau)\,\mathrm{d}\tau}\\
&=&\frac{1}{ik}\,
\int_{-\infty}^{+\infty}\,L_k\left(e^{ik\Gamma(x_k,y_k,\tau)}\right)\,\mathcal{A}_j(x_k,y_k,\tau)\,\mathrm{d}\tau\nonumber\\
&=&-\frac{1}{ik}\,
\int_{-\infty}^{+\infty}\,e^{ik\Gamma(x_k,y_k,\tau)}\widetilde{L}_k\left(\mathcal{A}_j\right)(x_k,y_k,\tau)\,\mathrm{d}\tau\nonumber\\
&=&\left(\frac{i}{k}\right)^r\,\int_{-\infty}^{+\infty}\,e^{ik\Gamma(x_k,y_k,\tau)}\,\widetilde{L}_k^r\big(\mathcal{A}_j\big)(x_k,y_k,\tau)\,
\mathrm{d}\tau,
\nonumber
\end{eqnarray}
where
$$
\widetilde{L}_k(h)=:\frac{\partial}{\partial\tau}\left(\frac{1}{\partial_\tau\Gamma(x_k,y_k,\tau)}\cdot h\right)\,\,\,\,\,
\,\,\,\,\,\,\,\,\,\left(h\in \mathcal{C}^\infty(\mathbb{R})\right).
$$

In general, given $\gamma \in \mathcal{C}^\infty(\mathbb{R})$ not vanishing identically,
let us denote by $U_\gamma\subseteq\mathbb{R}$ the open set where $\gamma\neq 0$, and
define 
$$
L_\gamma(h)=:\frac{\partial}{\partial\tau}\left(\frac{1}{\gamma}\cdot h\right)\,\,\,\,\,
\,\,\,\,\,\,\,\,\,\left(h\in \mathcal{C}^\infty(U_\gamma)\right),
$$
viewed as an operator $\mathcal{C}^\infty(U_\gamma)\rightarrow\mathcal{C}^\infty(U_\gamma)$.
\begin{lem}
\label{lem:decomposition L^r}
For any integer $r\ge 0$, 
\begin{equation}
 \label{eqn:decomposition L^r}
 \widetilde{L}_k^r\big(h\big)=\sum_{a+b\le 2r}P_{a,b}(\gamma)\,
\frac{\partial_\tau^a h}{\gamma^b}\,\,\,\,\,\,\,\,\,\,\,\left(h\in \mathcal{C}^\infty(U_\gamma)\right),
\end{equation}
where $P_{a,b}(\gamma)$ is a differential operator of degree $d_{a,b}$ satisfying 
$d_{ab}+a\le r$.
\end{lem}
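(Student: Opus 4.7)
The plan is to prove the decomposition by induction on $r$, interpreting the symbol $P_{a,b}(\gamma)$ as a differential polynomial in $\gamma$, i.e.\ a polynomial expression in $\gamma, \partial_\tau\gamma, \partial_\tau^2\gamma, \ldots$, with the ``degree'' $d_{a,b}$ recording the highest total number of $\tau$-derivatives of $\gamma$ that appear in any monomial. Under this convention, the class of such differential polynomials is closed under $\partial_\tau$ (which raises the degree by at most $1$) and under multiplication by derivatives of $\gamma$ (which raises the degree by the order of the derivative).

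For the base case $r=0$, one has $\widetilde L_k^{\,0}(h)=h$, which corresponds to $(a,b)=(0,0)$ and $P_{0,0}(\gamma)=1$, so $d_{0,0}+a=0\le 0$. For the inductive step, assume
$$
\widetilde L_k^{\,r}(h)=\sum_{a+b\le 2r}P_{a,b}(\gamma)\,\frac{\partial_\tau^a h}{\gamma^b},
$$
with $d_{a,b}+a\le r$. Apply $\widetilde L_k=L_\gamma$ termwise: for each summand, the Leibniz rule gives
\begin{eqnarray*}
L_\gamma\!\left(P_{a,b}(\gamma)\,\frac{\partial_\tau^a h}{\gamma^b}\right)
&=&\frac{\partial_\tau P_{a,b}(\gamma)\,\partial_\tau^a h}{\gamma^{b+1}}
+\frac{P_{a,b}(\gamma)\,\partial_\tau^{a+1}h}{\gamma^{b+1}}\\
&&-(b+1)\,\frac{P_{a,b}(\gamma)\,\partial_\tau\gamma\,\partial_\tau^a h}{\gamma^{b+2}}.
\end{eqnarray*}
Each of these three contributions is of the prescribed form: the new indices $(a',b')$ satisfy $a'+b'\le (a+b)+2\le 2r+2=2(r+1)$, and the new differential polynomials in $\gamma$ have degree at most $d_{a,b}+1$, so one checks in each case that $d_{a',b'}+a'\le (d_{a,b}+1)+a\le r+1$. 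This closes the induction.

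The one thing one must be careful about is fixing the degree convention so that the two operations actually encountered in the inductive step---differentiating a differential polynomial in $\gamma$ with respect to $\tau$, and multiplying by $\partial_\tau\gamma$---both raise the degree by exactly $1$; this is essentially the only substantive point, and with this convention in place the inductive bookkeeping for $a+b$ and for $d_{a,b}+a$ is entirely routine. I do not expect any serious obstacle: the lemma is a combinatorial statement about iterated applications of $L_\gamma$, and the bounds on $a+b$ and on $d_{a,b}+a$ are exactly what the recursion above produces.
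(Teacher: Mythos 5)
Your proof is correct and follows essentially the same route as the paper's: induction on $r$, applying $L_\gamma$ termwise via the Leibniz rule to obtain exactly the three contributions $(a,b+1)$, $(a+1,b+1)$, $(a,b+2)$, and checking that the bounds $a+b\le 2r$ and $d_{a,b}+a\le r$ propagate. Your explicit remark on the degree convention for the differential polynomials $P_{a,b}(\gamma)$ only makes precise what the paper leaves implicit, so there is nothing to add.
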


\begin{proof}[Proof of Lemma \ref{lem:decomposition L^r}]
The claim is obvious for $r=0$, and for $r=1$ we have
\begin{eqnarray*}
 L_\gamma(h)=\frac{\partial}{\partial\tau}\left(\frac{1}{\gamma}\cdot h\right)=
\dfrac{\gamma\,\partial_\tau\,h-h\partial_\tau\gamma}{\gamma^2}=\frac{\partial_\tau\,h}{\gamma}
 -\partial_\tau\gamma\,\frac{h}{\gamma^2}.
\end{eqnarray*}
Let us assume that the claim is true for $r\ge 1$. Then
\begin{eqnarray*}
 \lefteqn{\widetilde{L}_k^{r+1}\big(h\big)=\frac{\partial}{\partial_\tau}\left(\sum_{a+b\le 2r}P_{a,b}(\gamma)\,
\frac{\partial_\tau^a h}{\gamma^{b+1}}\right)}\\
&=&\sum_{a+b\le 2r}\left\{\dfrac{1}{\gamma^{b+1}}\,\left[\big(\partial_\tau P_{a,b}(\gamma)\big)\cdot\partial_\tau^a h +P_{a,b}(\gamma)\,
\partial_\tau^{a+1} h\right]-\dfrac{(b+1)}{\gamma^{b+2}}\,\partial_\tau\gamma\,P_{a,b}(\gamma)\,\partial_\tau^a h\right\},
\end{eqnarray*}
which clearly implies the statement for $r+1$.
\end{proof}

Let us apply Lemma \ref{lem:decomposition L^r} with $h=\mathcal{A}_j$ and $\gamma=\partial\Gamma_k$.
In view of (\ref{eqn:estimate derivative amplitude}) and (\ref{eqn:derivative amplitude bound}) with obtain
for each of the summands in (\ref{eqn:decomposition L^r}) the estimate
\begin{eqnarray}
 \label{eqn:estimate summand}
 \left|P_{a,b}\big(\partial\Gamma_k\big)\,
\frac{\partial_\tau^a \mathcal{A}_j}{(\partial\Gamma_k)^b}\right|&\le& 
C_{ab}\,k^{d-3j(1/6-\epsilon)+(a+b)(1/2-\epsilon)}\nonumber\\
&\le& C_{ab}\,k^{d-3j(1/6-\epsilon)+r-2r\epsilon)}.
\end{eqnarray}

Inserting this estimate in (\ref{eqn:integration by parts}), we deduce that for every $r$ and $j$ there exist
constants $C_{rj}>0$ such that
$$
\left|\int_{-\infty}^{+\infty}\,e^{ik\Gamma(x_k,y_k,\tau)}\,\mathcal{A}_j(x_k,y_k,\tau)\,\mathrm{d}\tau\right|
\le C_{jr}\,k^{d-3j(1/6-\epsilon)-2r\epsilon}.
$$

We have thus proved the following. Given any $C>0$ and $\epsilon\in (0,1/6)$, 
we have $\mathcal{G}_k(x_k,y_k)=O\left(k^{-\infty}\right)$ uniformly for all sequences $(x_k,y_k)\in X\times X$
such that either

\begin{description}
 \item[A):] $\mathrm{dist}_M\left(n_k,m_k^\chi\right)\ge C\,k^{\epsilon-1/6}$, or
 \item[B):] $|f(n_k)-E|\ge DC\,k^{\epsilon-1/6}$, for an appropriate constant $D>0$,
  depending only on $f$ and $E$.
\end{description}

Now suppose that:

$$
\max\left\{\mathrm{dist}_M\left(n_k,m_k^\chi\right),\,|f(n_k)-E|\right\}\ge C\,k^{\epsilon-\frac{1}{2}}.
$$
In order to prove that $\mathcal{G}_k(x_k,y_k)=O\left(k^{-\infty}\right)$, after passing to subsequences we
are reduced to assuming that either 
\begin{enumerate}
 \item $\mathrm{dist}_M\left(n_k,m_k^\chi\right)\ge (C/D)\,k^{\epsilon-1/6}$, or else
 \item $\mathrm{dist}_M\left(n_k,m_k^\chi\right)\le (C/D)\,k^{\epsilon-1/6}$ and $|f(m_k)-E|\ge C\,k^{\epsilon-1/6}$.
\end{enumerate}
The statement then follows by replacing $C$ by $C/D$ in A) and B) above.

Finally, let us assume that a sequence $(x_k,y_k)\in X\times X$ satisfies
\begin{equation}
 \label{eqn:hypothesis theorem}
\max\left\{\mathrm{dist}_M\left(n_k,m_k^\chi\right),\,|f(n_k)-E|,\,|f(m_k)-E|\right\}\ge C\,k^{\epsilon-\frac{1}{2}}.
\end{equation}

There is a constant $D'\ge 1$ such that $|f(m)-f(n)|\le D'\,\mathrm{dist}_M(m,n)$ for all $m,n\in M$. Since
$f$ is $\phi^M$ invariant, we deduce that in fact
\begin{equation}
 \label{eqn:estimate f distance}
|f(m)-f(n)|\le D'\,\mathrm{dist}_M\left(n,m^\chi\right)\,\,\,\,\,\,\,(m,n\in M).
\end{equation}

Fix $R>0$ a large positive constant. After passing to subsequences, we fall under one of the following the three cases:

\begin{description}
 \item[A):] $\mathrm{dist}_M\left(n_k,m_k^\chi\right)\ge (C/RDD')\,k^{\epsilon-1/6}$, or
 \item[B):] $\mathrm{dist}_M\left(n_k,m_k^\chi\right)\le (C/RDD')\,k^{\epsilon-1/6}$
and $|f(n_k)-E|\ge (C/RD')\,k^{\epsilon-1/6}$, or
\item[C):] $\mathrm{dist}_M\left(n_k,m_k^\chi\right)\le (C/RDD')\,k^{\epsilon-1/6}$, $|f(m_k)-E|\ge C\,k^{\epsilon-1/6}$.
\end{description}

If either A) or B) hold, we are reduced to the case above, with $C$ replaced by $C/RD'$. 
Suppose on the other hand that C) holds. In view of (\ref{eqn:estimate f distance}), we get:
\begin{eqnarray*}
 |f(n_k)-E|&\ge& |f(m_k)-E|-|f(n_k)-f(m_k)|\\
&\ge&C\,k^{\epsilon-1/6}-D'\,\mathrm{dist}_M\left(n_k,m_k^\chi\right)\ge \left[C-\frac{C}{RD}\right]\,k^{\epsilon-1/6}
\end{eqnarray*}
Since for $R\gg 0$ we have
$$
C-\frac{C}{RD}\ge D\cdot \frac{C}{RDD'}=\frac{C}{RD'},
$$
we are reduced to the previous case. 

Hence in each case
$\mathcal{G}_k(x_k,y_k)=O\left(k^{-\infty}\right)$, and the proof of Theorem \ref{thm:rapid decrease}
is complete.
\end{proof}

\section{Proof of Theorem \ref{thm:scaling asymptotics}}

\begin{proof}[Proof of Theorem \ref{thm:scaling asymptotics}]
 
We shall first work in unrescaled coordinates and set
$x+\mathbf{v}_1=x+(0,\mathbf{v}_1)$, $y+\mathbf{v}_2=y+(0,\mathbf{v}_2)$, where $\|\mathbf{v}_1\|,\,
\|\mathbf{v}_2\|\le C\,k^{\epsilon-1/2}$.
We get
\begin{eqnarray}
 \label{eqn:gutzwiller integral 4}
\lefteqn{\mathcal{G}_k\big(x+(\theta_1,\mathbf{v}_1),y+(\theta_2,\mathbf{v}_2)\big)
=
e^{ik(\theta_1-\theta_2)}\,\mathcal{G}_k\big(x+\mathbf{v}_1,y+\mathbf{v}_2\big)}\nonumber\\
&=&e^{ik(\theta_1-\theta_2)}\,
\int_{-\infty}^{+\infty}\,e^{-i\tau kE}\,\chi(\tau)\,\Pi_k\big((x+\mathbf{v}_1)_\tau,y+\mathbf{v}_2\big)
\,\mathrm{d}\tau.
\end{eqnarray}
 
 Let us set $m=m_x$, $n=m_y$. 
 
 \begin{prop}
  \label{prop:localizzazione in tau}
  There is a constant $\beta=\beta(x,y)>0$ such that the the contribution
  of the locus in $\mathbb{R}$ where
  \begin{equation}
   \label{eqn:bound on distance period}
   \mathrm{dist}_\mathbb{R}\left(\tau,\mathrm{Per}^\mathbb{R}_M(n\mapsto m)\right)\ge \beta C\,k^{\epsilon-1/2} 
  \end{equation}
to the asymptotics for $k\rightarrow +\infty$
  of (\ref{eqn:gutzwiller integral 4}) is rapidly decreasing. If $E$ is a regular value of $f$, then $\beta$ can be chosen uniformly
  on $\mathcal{R}(f,E)$.
 \end{prop}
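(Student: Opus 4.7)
The plan is to combine the off-diagonal rapid decay of the Szeg\"{o} kernel (Proposition \ref{prop:rapid decrease}) with a quantitative transversality estimate for the map $\tau\mapsto \phi^M_{-\tau}(m)$ at a period. Writing $m=\pi(x)$, $n=\pi(y)$, the integrand in (\ref{eqn:gutzwiller integral 4}) is the product of $\chi(\tau)$ and $\Pi_k\big((x+\mathbf{v}_1)_\tau,y+\mathbf{v}_2\big)$, and
$$
\pi\big((x+\mathbf{v}_1)_\tau\big)=\phi^M_{-\tau}(m+\mathbf{v}_1),\qquad \pi(y+\mathbf{v}_2)=n+\mathbf{v}_2
$$
in the adapted local coordinates on $M$ coming from HLC. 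By Proposition \ref{prop:rapid decrease}, $\Pi_k((x+\mathbf{v}_1)_\tau,y+\mathbf{v}_2)=O(k^{-\infty})$ unless $\mathrm{dist}_M\big(\phi^M_{-\tau}(m+\mathbf{v}_1),n+\mathbf{v}_2\big)\le C'k^{\epsilon-1/2}$ for some fixed $C'$. Since $\chi$ has compact support and $\phi^M$ is uniformly Lipschitz on $\mathrm{supp}(\chi)\times M$, together with the bounds $\|\mathbf{v}_j\|\le Ck^{\epsilon-1/2}$ we obtain by the triangle inequality
$$
\mathrm{dist}_M\big(\phi^M_{-\tau}(m),n\big)\le C''\,k^{\epsilon-1/2}
$$
for some $C''=C''(C,\chi)$, outside of which the integrand is $O(k^{-\infty})$. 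Integrating over the compact set $\mathrm{supp}(\chi)$ then yields an $O(k^{-\infty})$ contribution from the complementary range, so it suffices to show that the preceding inequality implies $\mathrm{dist}_\mathbb{R}(\tau,\mathrm{Per}^\mathbb{R}_M(n\mapsto m))\le \beta Ck^{\epsilon-1/2}$.

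The latter is a quantitative version of continuity at the orbit. Because $\mathrm{Per}^\mathbb{R}_M(n\mapsto m)$ is either empty, a singleton, or a coset of $\mathbb{Z}\cdot\tau_1$ with $\tau_1$ the primitive period at $m$, its intersection with $\mathrm{supp}(\chi)$ is a finite set $\{\tau_a\}_{a\in\mathcal{A}_{\mathrm{fin}}}$. If this intersection is empty, a compactness argument applied to the continuous, nowhere-zero function $\tau\mapsto \mathrm{dist}_M(\phi^M_{-\tau}(m),n)$ on $\mathrm{supp}(\chi)$ gives a uniform positive lower bound, and we are done for any $k\gg 0$. Otherwise, pick small pairwise disjoint neighborhoods $N_a$ of each $\tau_a$; outside $\bigcup_a N_a$ the same compactness bounds $\mathrm{dist}_M(\phi^M_{-\tau}(m),n)$ from below by a positive constant, so for $k\gg 0$ the integrand is negligible there. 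Inside each $N_a$, set $\tau=\tau_a+\delta$; since $\upsilon_f(m)\neq 0$ implies $\upsilon_f(n)\neq 0$, a first-order Taylor expansion in a geodesic chart at $n$ (as in (\ref{eqn:small tau displacement})) gives
$$
\phi^M_{-\tau}(m)=\phi^M_{-\delta}(n)=n-\delta\,\upsilon_f(n)+R_2(\delta),
$$
so by Lemma \ref{lem:distance estimate local chart}, after shrinking $N_a$ if necessary,
$$
\mathrm{dist}_M\big(\phi^M_{-\tau}(m),n\big)\ge \tfrac12\,\|\upsilon_f(n)\|\,|\delta|.
$$
Combined with the previous displayed inequality this forces $|\delta|\le (2C''/\|\upsilon_f(n)\|)\,k^{\epsilon-1/2}$, giving the claim with $\beta=\beta(x,y)=2C''/(C\,\|\upsilon_f(n)\|)$.

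For the uniformity statement when $E$ is a regular value of $f$: by Proposition \ref{prop:hypersurface periods}, $\mathrm{Per}^\mathbb{R}_M(M_E)$ is discrete, so within the compact $\mathrm{supp}(\chi)$ the distinct $\tau_a$'s for varying $(m,n)\in \mathcal{R}_\chi(f,E)$ remain uniformly separated; the size of the neighborhoods $N_a$ on which the linear estimate is valid can therefore be chosen uniformly by a covering/compactness argument. Moreover $\|\upsilon_f\|$ attains a uniform positive minimum on the compact $M_E$. The lower bound on $\mathrm{dist}_M(\phi^M_{-\tau}(m),n)$ outside $\bigcup_a N_a$ is likewise uniform by continuity and compactness of $M_E\times M_E\times\mathrm{supp}(\chi)$. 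Hence $\beta$ can be chosen independently of $(x,y)$ on $\mathcal{R}_\chi(f,E)$. The main obstacle is the uniformity step, since one must control simultaneously the spacing of periods, the size of the Taylor neighborhoods, and the implicit constants from Lemma \ref{lem:distance estimate local chart}; all three follow from the regular-value hypothesis, but require careful compactness bookkeeping.
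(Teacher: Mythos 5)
Your proposal is correct and takes essentially the same route as the paper: the off-diagonal rapid decay of $\Pi_k$ (Proposition \ref{prop:rapid decrease}), the fact that $\phi^M_\tau$ is an isometry, and the triangle inequality reduce the claim to a lower bound for $\mathrm{dist}_M(m_\tau,n)$ in terms of $\mathrm{dist}_\mathbb{R}\left(\tau,\mathrm{Per}^\mathbb{R}_M(n\mapsto m)\right)$, which you obtain (in contrapositive form) from the non-vanishing of $\upsilon_f$ via a Taylor expansion at $n$ together with a near/far splitting of the period set, exactly the content the paper compresses into (\ref{eqn:estimate distance norm 3}) and the choice $\beta>3/\|\upsilon_f(m)\|$. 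In both treatments the uniformity of $\beta$ when $E$ is regular ultimately rests on the uniform positive lower bound for $\|\upsilon_f\|$ on the compact hypersurface $M_E$, so your extra compactness bookkeeping is a refinement of, not a departure from, the paper's argument.
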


 \begin{proof}[Proof of Proposition \ref{prop:localizzazione in tau}]
 Given any $\delta>0$ for $k\gg 0$ we have (in adapted local coordinates, see page \pageref{pageHLC})
 \begin{equation}
  \label{eqn:estimate distance norm 1-1}
  (1-\delta)\,\|\mathbf{v}_1\|\le \mathrm{dist}_M\big(m_x,m_x+\mathbf{v}_1\big)
  \le (1+\delta)\,\|\mathbf{v}_1\|,
 \end{equation}
and similarly for $m_y$ and $\mathbf{v}_2$. Therefore, using that $\phi^M_\tau$ is a Riemannian isometry, we get
\begin{eqnarray}
 \label{eqn:estimate distance norm 2}
 \lefteqn{\mathrm{dist}_M(m_{\tau},n)}\nonumber\\
 &\le& \mathrm{dist}_M\big(m_{\tau},(m+\mathbf{v}_1)_\tau)+
 \mathrm{dist}_M\big((m+\mathbf{v}_1)_\tau,n+\mathbf{v}_2)+\mathrm{dist}_M\big(n+\mathbf{v}_2,n)\nonumber\\
 &=&\mathrm{dist}_M\big(m,m+\mathbf{v}_1)+
 \mathrm{dist}_M\big((m+\mathbf{v}_1)_\tau,n+\mathbf{v}_2)+\mathrm{dist}_M\big(n+\mathbf{v}_2,n)\nonumber\\
 &\le&C\,(1+\delta)\,\|\mathbf{v}_1\|+
 \mathrm{dist}_M\big((m+\mathbf{v}_1)_\tau,n+\mathbf{v}_2)+C\,(1+\delta)\,\|\mathbf{v}_2\|\nonumber\\
 &\le&2C\,(1+\delta)\,k^{\epsilon-1/2}+
 \mathrm{dist}_M\big((m+\mathbf{v}_1)_\tau,n+\mathbf{v}_2).
\end{eqnarray}

On the other hand, if (\ref{eqn:bound on distance period}) holds then $\tau=\sigma+\tau'$,
where $\sigma\in \mathrm{Per}^\mathbb{R}_M(n\mapsto m)$ and $|\tau'|\ge \beta C\,k^{\epsilon-1/2}$.
Thus $m_\tau=\phi^M_{-\tau'}(n)$, whence
\begin{equation}
 \label{eqn:estimate distance norm 3}
 \mathrm{dist}_M(m_{\tau},n)\ge |\tau'|\,(1-\delta)\,\|\upsilon_f(m)\|\ge \beta C\,(1-\delta)\,\|\upsilon_f(m)\|\,
 k^{\epsilon-1/2}.
\end{equation}

 From (\ref{eqn:estimate distance norm 2}) and (\ref{eqn:estimate distance norm 3}) we obtain that if
 $ \mathrm{dist}_\mathbb{R}\left(\tau,\mathrm{Per}^\mathbb{R}_M(n\mapsto m)\right)\ge \beta C\,k^{\epsilon-1/2} $
 then
 \begin{eqnarray}
  \label{eqn:estimate distance norma 4}
  \mathrm{dist}_M\big((m+\mathbf{v}_1)_\tau,n+\mathbf{v}_2)&\ge&\mathrm{dist}_M(m_{\tau},n)-2C\,(1+\delta)\,k^{\epsilon-1/2}\\
  &\ge&\beta C\,(1-\delta)\,\|\upsilon_f(m)\|\,
 k^{\epsilon-1/2}-2C\,(1+\delta)\,k^{\epsilon-1/2}\nonumber\\
 &=&C\,\big[\beta\,(1-\delta)\,\|\upsilon_f(m)\|-2\,(1+\delta)\big]\,k^{\epsilon-1/2}.\nonumber
 \end{eqnarray}
Thus it suffices to choose $\beta>3/\|\upsilon_f(m)\|$, say, to conclude that 
in (\ref{eqn:gutzwiller integral 4}) we have
$\Pi_k\big((x+\mathbf{v}_1)_\tau,y+\mathbf{v}_2\big)=O\left(k^{-\infty}\right)$, for $k\rightarrow +\infty$,
uniformly where (\ref{eqn:bound on distance period}) is satisfied.

 \end{proof}
 
 Proposition \ref{prop:localizzazione in tau} means the following. First of all, let $\varrho_0\in \mathcal{C}^\infty(\mathbb{R})$
\label{page defn varrho_0} be $\ge 0$, $\equiv 1$ on $(-1,1)$ and $\equiv 0$ on $\mathbb{R}\setminus (-2,2)$. Given some very small $\epsilon_1>0$, let us set 
$\varrho(\cdot)=:\sum_a \varrho_0\big((\cdot -\tau_a)/\epsilon_1\big)$ (recall (\ref{eqn:period set})). 
Then $\varrho$ is supported in a small neighborhood
of $\mathrm{Per}^\mathbb{R}_M(n\mapsto m)$, namely 
\begin{equation}
 \label{eqn:support varrho}
 \mathrm{supp}(\varrho)\subseteq \bigcup_{a\in \mathcal{A}}(a-2\epsilon_1,a+2\epsilon_1),
\end{equation}
and 
\begin{equation}
 \label{eqn:support varrho=1}
 \varrho\equiv 1\,\,\,\mathrm{on}\,\,\, \bigcup_{a\in \mathcal{A}}(a-\epsilon_1,a+\epsilon_1),
\end{equation}
Thus first of all we have:
\begin{eqnarray}
 \label{eqn:gutzwiller integral 5}
\lefteqn{\mathcal{G}_k\big(x+(\theta_1,\mathbf{v}_1),y+(\theta_2,\mathbf{v}_2)\big)}\\
&=&e^{ik(\theta_1-\theta_2)}\,
\int_{-\infty}^{+\infty}\,e^{-i\tau kE}\,\varrho(\tau)\,
\chi(\tau)\,\Pi_k\big((x+\mathbf{v}_1)_\tau,y+\mathbf{v}_2\big)
\,\mathrm{d}\tau\nonumber\\
&\sim&e^{ik(\theta_1-\theta_2)}\,
\sum_a\int_{-\infty}^{+\infty}\,e^{-i\tau kE}\,\varrho_0\big((\tau -\tau_a)/\epsilon_1\big)\,
\chi(\tau)\,\Pi_k\big((x+\mathbf{v}_1)_\tau,y+\mathbf{v}_2\big)
\,\mathrm{d}\tau.\nonumber
\end{eqnarray}
By the Proposition, only a rapidly decreasing contribution to the asymptotics is lost, if the integrand is further multiplied
by $\varrho_0\left(k^{1/2-\epsilon}\,(\tau-\tau_a)/\beta C\right)$; in addition, for $k\gg 0$
this factor is supported where $\varrho_0\big((\tau -\tau_a)/\epsilon_1\big)=1$. Therefore, we conclude
\begin{eqnarray}
 \label{eqn:gutzwiller integral 6}
\lefteqn{\mathcal{G}_k\big(x+(\theta_1,\mathbf{v}_1),y+(\theta_2,\mathbf{v}_2)\big)\sim e^{ik(\theta_1-\theta_2)}}\nonumber\\
&&\cdot 
\sum_a\int_{-\infty}^{+\infty}\,e^{-i\tau kE}\,\varrho_0\left(k^{1/2-\epsilon}\,(\tau-\tau_a)/\beta C\right)\,
\chi(\tau)\,\Pi_k\big((x+\mathbf{v}_1)_\tau,y+\mathbf{v}_2\big)
\,\mathrm{d}\tau\nonumber\\
&=&e^{ik(\theta_1-\theta_2)}\,\sum_a\,\mathcal{G}_k^{(a)}\big(x+\mathbf{v}_1,y+\mathbf{v}_2\big),
\end{eqnarray}
where we have set, with $\nu(\cdot)=:\varrho_0(\cdot /\beta C)$,
\begin{eqnarray}
 \label{eqn:ath summand}
\lefteqn{\mathcal{G}_k^{(a)}\big(x+\mathbf{v}_1,y+\mathbf{v}_2\big)}\\
&=:&\int_{-\infty}^{+\infty}\,e^{-i\tau kE}\,\nu\left(k^{1/2-\epsilon}\,(\tau-\tau_a)\right)\,
\chi(\tau)\,\Pi_k\big((x+\mathbf{v}_1)_\tau,y+\mathbf{v}_2\big)
\,\mathrm{d}\tau\nonumber\\
&=&e^{-i\tau_a kE}\,\int_{-\infty}^{+\infty}\,e^{-i\tau kE}\,\nu\left(k^{1/2-\epsilon}\,\tau\right)\,
\chi(\tau+\tau_a)\,\Pi_k\big((x+\mathbf{v}_1)_{\tau+\tau_a},y+\mathbf{v}_2\big)
\,\mathrm{d}\tau.\nonumber
\end{eqnarray}
In the latter integral, the integrand is supported where $|\tau|\le 2\,\beta C\,k^{\epsilon-1/2}$; in addition, 
$\nu\left(k^{1/2-\epsilon}\,\tau\right)\equiv 1$ for $|\tau|\le \beta C\,k^{\epsilon-1/2}$. The constant $\beta$ 
can be chosen arbitrarily large.

In order to proceed further, we need to give an explicit expression for $(x+\mathbf{v}_1)_{\tau+\tau_a}$.
Given (\ref{eqn:lifted period}), a slight modification of Lemma 3.2 of \cite{pao-ltfII} shows that, 
with $A_a$ as on page \pageref{eqn:differentialHLC},
\begin{equation}
 \label{eqn:HLC_tau_a}
\phi^X_{-\tau_a}(x+\mathbf{v}_1)=y+\big(\vartheta_a+R_3(\mathbf{v}_1),A_a\mathbf{v}_1+R_2(\mathbf{v}_1)\big).
\end{equation}
Therefore, applying Corollary 2.2 of \cite{pao_ijm_2012} we have:
\begin{eqnarray}
 \label{eqn:estimate displacement}
(x+\mathbf{v}_1)_{\tau+\tau_a}&=&\phi^X_{-(\tau+\tau_a)}(x+\mathbf{v}_1)=\phi^X_{-\tau}\left(\phi^X_{-\tau_a}(x+\mathbf{v}_1\right)\Big)
\nonumber\\
&=&\phi^X_{-\tau}\Big(y+\big(\vartheta_a+R_3(\mathbf{v}_1),A_a\mathbf{v}_1+R_2(\mathbf{v}_1\big)\Big)\nonumber\\
&=&y+\Big(\vartheta_a+\tau\,\Big[f(n)+\omega_n\big(\upsilon_f(n),A_a\mathbf{v}_1\big)\big]+R_3(\tau,\mathbf{v}_1),\nonumber\\
&&
A_a\mathbf{v}_1-\tau\,\upsilon_f(n)+R_2(\tau,\mathbf{v}_1)\Big)\nonumber\\
&=&y+\big(\vartheta_a+\theta_a(\tau),\mathbf{v}_a(\tau)\big),
\end{eqnarray}
where $R_j$ is as usual a $\mathcal{C}^\infty$ function vanishing to $j$-th order at the origin, possibly depending on $a$,
and $\theta_a(\tau)$, $\mathbf{v}_a(\tau)$ are defined by the latter equality.

Thus in view of (\ref{eqn:scaling asymptotics rescaled 3}) we get
\begin{eqnarray}
 \label{eqn:szego asymptotic integral}
\lefteqn{\Pi_k\big((x+\mathbf{v}_1)_{\tau+\tau_a},y+\mathbf{v}_2\big)=
\Pi_k\big(y+\big(\vartheta_a+\theta_a(\tau),\mathbf{v}_a(\tau)\big),y+\mathbf{v}_2\big)}\nonumber\\
&\sim&\left(\frac{k}{\pi}\right)^{d}\,e^{k[i\vartheta_a+i\theta_a(\tau)+\psi_2(\mathbf{v}_a(\tau),\mathbf{v}_2)]}
 \cdot
\left[1+\sum_{j=1}^{+\infty}\,A_j(k,y;\mathbf{v}_a(\tau),\mathbf{v}_2)\right]. 
\end{eqnarray}

We deduce from (\ref{eqn:ath summand}) and (\ref{eqn:szego asymptotic integral}) that
\begin{eqnarray}
 \label{eqn:ath summand 1}
\lefteqn{\mathcal{G}_k^{(a)}\big(x+\mathbf{v}_1,y+\mathbf{v}_2\big)}\\
&=&e^{ik\,(\vartheta_a-\tau_a E)}\,\left(\frac{k}{\pi}\right)^d\int_{-\infty}^{+\infty}\,e^{ik[\theta_a(\tau)-\tau E-i\psi_2
(\mathbf{v}_a(\tau),\mathbf{v}_2)]}\,\mathcal{A}_k(y,\mathbf{v}_a(\tau),\mathbf{v}_2)
\,\mathrm{d}\tau,\nonumber
\end{eqnarray}
where
\begin{eqnarray}
 \label{eqn:definition of amplitude}
\lefteqn{\mathcal{A}_k(\tau,\mathbf{v}_a(\tau),\mathbf{v}_2)}\\
&\sim&\nu\left(k^{1/2-\epsilon}\,\tau\right)\,
\chi(\tau+\tau_a)\cdot\left[1+\sum_{j=1}^{+\infty}\,\sum_{b=\lceil -j/2\rceil}^{j}k^{b}Q_{j+2b}(y;\mathbf{v}_a(\tau),\mathbf{v}_2)\right],
\nonumber
\end{eqnarray}
and the asymptotic expansion can be integrated term by term. 

We can expand the exponent in (\ref{eqn:ath summand 1}) as follows:
\begin{eqnarray}
\lefteqn{\theta_a(\tau)-\tau E-i\psi_2\big
(\mathbf{v}_a(\tau),\mathbf{v}_2\big)}\\
&=&\tau\,\Big[f(n)-E+\omega_n\big(\upsilon_f(n),A_a\mathbf{v}_1\big)\big]+R_3(\tau,\mathbf{v}_1)\nonumber\\
&&-\omega_n\big(A_a\mathbf{v}_1-\tau\,\upsilon_f(n)+R_2(\tau,\mathbf{v}_1),\mathbf{v}_2\big)
+\frac{i}{2}\,\left\|A_a\mathbf{v}_1-\mathbf{v}_2-\tau\,\upsilon_f(n)+R_2(\tau,\mathbf{v}_1)\right\|^2\nonumber\\
&=&-\omega_n(A_a\mathbf{v}_1,\mathbf{v}_2)
+\frac{i}{2}\,\left\|A_a\mathbf{v}_a-\mathbf{v}_2\right\|^2+R_3(\tau,\mathbf{v}_1,\mathbf{v}_2)\nonumber\\
&&\tau\,\Big[f(n)-E+\omega_n\big(\upsilon_f(n),A_a\mathbf{v}_1+\mathbf{v}_2\big)
\big]\nonumber\\
&&+\frac{i}{2}\,\left[\tau^2\,\|\upsilon_f(n)\|^2-2\,\tau\,g_n\big(\upsilon_f(n),A_a\mathbf{v}_1-\mathbf{v}_2\big)\right]\nonumber\\
&=&-i\,\psi_2(A_a\mathbf{v}_1,\mathbf{v}_2)+\Upsilon _a(y,\mathbf{v}_1,\mathbf{v}_2;\tau)+
R_3(\tau,\mathbf{v}_1,\mathbf{v}_2),\nonumber
\end{eqnarray}
where we have set, using now that $f(n)=E$,
\begin{eqnarray}
 \label{eqn:defn of phase}
 \lefteqn{\Upsilon _a(y,\mathbf{v}_1,\mathbf{v}_2;\tau)}\\
&=:&\tau\,\omega_n\big(\upsilon_f(n),A_a\mathbf{v}_1+\mathbf{v}_2\big)
+\frac{i}{2}\,\left[\tau^2\,\|\upsilon_f(n)\|^2-2\,\tau\,g_n\big(\upsilon_f(n),A_a\mathbf{v}_1-\mathbf{v}_2\big)\right].
\nonumber
\end{eqnarray}

We can then rewrite (\ref{eqn:ath summand 1}) in the following manner:
\begin{eqnarray}
 \label{eqn:ath summand 2}
\lefteqn{\mathcal{G}_k^{(a)}\big(x+\mathbf{v}_1,y+\mathbf{v}_2\big)}\\
&=&e^{ik\,(\vartheta_a-\tau_a E)}\,\left(\frac{k}{\pi}\right)^d\int_{-\infty}^{+\infty}\,
e^{ik\widetilde{\Upsilon}_a(y,\mathbf{v}_1,\mathbf{v}_2;\tau)}\,\mathcal{B}_k(y,\mathbf{v}_a(\tau),\mathbf{v}_2)
\,\mathrm{d}\tau,\nonumber
\end{eqnarray}
where
\begin{equation}
 \label{eqn:new phase}
 \widetilde{\Upsilon}_a(y,\mathbf{v}_1,\mathbf{v}_2;\tau)=
 -i\,\psi_2(A_a\mathbf{v}_1,\mathbf{v}_2)+\Upsilon _a(y,\mathbf{v}_1,\mathbf{v}_2;\tau),
\end{equation}
and
\begin{equation}
 \label{eqn:new amplitude}
\mathcal{B}_k(y,\mathbf{v}_a(\tau),\mathbf{v}_2)=:e^{ik\,R_3(\tau,\mathbf{v}_1,\mathbf{v}_2)}\,
\mathcal{A}_k(y,\mathbf{v}_a(\tau),\mathbf{v}_2)
\end{equation}

We shall view (\ref{eqn:ath summand 2}) as an oscillatory integral in $\mathrm{d}\tau$, with phase 
$\widetilde{\Upsilon}_a$ and amplitude $\mathcal{B}_k(y,\mathbf{v}_a(\tau),\mathbf{v}_2)$. 
The imaginary part of $\widetilde{\Upsilon}_a$ satisfies:
\begin{eqnarray}
 \label{eqn:imaginary part phase}
 \lefteqn{\Im \widetilde{\Upsilon}_a(y,\mathbf{v}_1,\mathbf{v}_2;\tau)}\nonumber\\
 &=&\frac{1}{2}\,\left[\left\|A_a\mathbf{v}_a-\mathbf{v}_2\right\|^2+
 \tau^2\,\|\upsilon_f(n)\|^2-2\,\tau\,g_n\big(\upsilon_f(n),A_a\mathbf{v}_1-\mathbf{v}_2\big)\right]\nonumber\\
 &=&\frac{1}{2}\,\left\|A_a\mathbf{v}_a-\mathbf{v}_2-\tau\,\upsilon_f(n)\right\|^2\ge 0.
\end{eqnarray}

Regarding the phase, momentarily viewing $k$ as a continuous parameter we have:
\begin{lem}
 \label{lem:amplitude in S1:2}
$\mathcal{B}_\cdot(y,\mathbf{v}_a(\cdot),\mathbf{v}_2)\in S^0_{1/2-\epsilon}(\mathbb{R}\times \mathbb{R}_+)$,
as a function of $(\tau,k)\in \mathbb{R}\times \mathbb{R}_+$, uniformly in $y$, $\mathbf{v}_1$ and $\mathbf{v}_2$
with $\|\mathbf{v}_j\|\le C\,k^{\epsilon-1/2}$.
\end{lem}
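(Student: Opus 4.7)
The plan is to verify directly the symbol estimates
$$
\left|\partial_\tau^{a}\mathcal{B}_k\bigl(y,\mathbf{v}_a(\tau),\mathbf{v}_2\bigr)\right|\le C_a\,k^{a(1/2-\epsilon)},
\qquad a=0,1,2,\dots,
$$
uniformly in $y$, $\mathbf{v}_1,\mathbf{v}_2$ with $\|\mathbf{v}_j\|\le C\,k^{\epsilon-1/2}$, by examining each factor in
$$
\mathcal{B}_k(y,\mathbf{v}_a(\tau),\mathbf{v}_2)=e^{ikR_3(\tau,\mathbf{v}_1,\mathbf{v}_2)}\,\mathcal{A}_k\bigl(y,\mathbf{v}_a(\tau),\mathbf{v}_2\bigr).
$$
The crucial preliminary observation, which I would establish first, is that on the support of the cutoff $\nu\bigl(k^{1/2-\epsilon}\tau\bigr)$ we have $|\tau|\le 2\beta C\,k^{\epsilon-1/2}$, so all three variables $\tau,\mathbf{v}_1,\mathbf{v}_2$ lie in a ball of radius $O(k^{\epsilon-1/2})$ in their respective Euclidean spaces, and consequently $\|\mathbf{v}_a(\tau)\|=O(k^{\epsilon-1/2})$ as well, with $\partial_\tau^p\mathbf{v}_a(\tau)=O(1)$ for every $p\ge 1$ (since $\mathbf{v}_a(\tau)=A_a\mathbf{v}_1-\tau\,\upsilon_f(n)+R_2(\tau,\mathbf{v}_1)$).

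Next I would treat the three factors constituting $\mathcal{A}_k$. The cutoff $\nu\bigl(k^{1/2-\epsilon}\tau\bigr)$ lies in $S^0_{1/2-\epsilon}$ by construction. The smooth compactly supported factor $\chi(\tau+\tau_a)$ is a zeroth-order symbol with $k$-independent derivatives. For each summand $k^{b}\,Q_{j+2b}\bigl(y;\mathbf{v}_a(\tau),\mathbf{v}_2\bigr)$ from the asymptotic expansion, the polynomial has degree $j+2b$, so each $\tau$-derivative removes one factor of $\mathbf{v}_a(\tau)$; using $\|\mathbf{v}_a(\tau)\|=O(k^{\epsilon-1/2})$ and the bound $b\le j$, the resulting estimate
$$
\left|\partial_\tau^a\bigl(k^{b}Q_{j+2b}\bigr)\right|\le C_{j,b,a}\,k^{b+(j+2b-a)(\epsilon-1/2)}\le C_{j,b,a}\,k^{j(3\epsilon-1/2)/1}\cdot k^{a(1/2-\epsilon)}
$$
with $3\epsilon-1/2<0$ shows each term is comfortably in $S^0_{1/2-\epsilon}$ (indeed the series is an actual asymptotic expansion in that class, cf.\ the discussion following (\ref{eqn:Aj definition})).

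The only delicate point, and the one I expect to be the main obstacle, is the analysis of the exponential factor $e^{ikR_3(\tau,\mathbf{v}_1,\mathbf{v}_2)}$. Here I would apply Fa\`a di Bruno: $\partial_\tau^a e^{ikR_3}=e^{ikR_3}\sum_{(m_j)}c_{(m_j)}\,(ik)^p\prod_{j\ge 1}\bigl(\partial_\tau^{j}R_3\bigr)^{m_j}$, where $\sum m_j=p$ and $\sum j\,m_j=a$. Since $R_3$ vanishes to third order at the origin, $\partial_\tau^{j}R_3=O\bigl(k^{-(3-j)(1/2-\epsilon)}\bigr)$ for $j\le 2$ and $O(1)$ for $j\ge 3$; moreover $kR_3=O\bigl(k^{1-3(1/2-\epsilon)}\bigr)=O\bigl(k^{3\epsilon-1/2}\bigr)=O(1)$ for $\epsilon<1/6$, so $|e^{ikR_3}|$ is bounded. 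A generic term thus has size
$$
k^{p-(1/2-\epsilon)(2m_1+m_2)}.
$$
The bookkeeping one needs is that this is always $\le k^{a(1/2-\epsilon)}$; the worst case $m_1=a$, $p=a$ yields $k^{2a\epsilon}$, and $2\epsilon\le 1/2-\epsilon$ is precisely the condition $\epsilon\le 1/6$. The general case is handled by the identity $p-(1/2-\epsilon)(2m_1+m_2)=a(1/2-\epsilon)-\bigl[(1/2-3\epsilon)(p-m_{\ge 3})+(1/2-\epsilon)\sum_{j\ge 3}(j-1)m_j\bigr]+(\text{non-positive terms})$, where each bracket is non-negative for $\epsilon\le 1/6$.

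Putting the three factors together through Leibniz's rule yields the desired bound with constants $C_a$ independent of $y,\mathbf{v}_1,\mathbf{v}_2$ in the prescribed range, establishing $\mathcal{B}_\cdot\in S^0_{1/2-\epsilon}(\mathbb{R}\times\mathbb{R}_+)$.
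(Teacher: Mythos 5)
Your proposal is correct and follows essentially the same route as the paper: reduce to the summands of the Szeg\"o expansion, note that on the support of $\nu\bigl(k^{1/2-\epsilon}\tau\bigr)$ all variables are $O\bigl(k^{\epsilon-1/2}\bigr)$, and check that each $\tau$-derivative costs at most $k^{1/2-\epsilon}$, the only delicate factor being $e^{ikR_3}$ where the condition $\epsilon\le 1/6$ enters. The only difference is bookkeeping: you track the derivatives of the exponential via Fa\`a di Bruno and a combinatorial inequality, while the paper runs an induction on the order of differentiation keeping the terms in the structural form $k^{b'+a'(1/2-\epsilon)}\,\widetilde{\nu}\bigl(k^{1/2-\epsilon}\tau\bigr)\,R_{c'}\,e^{ikR_3}$ — the two are equivalent in substance.
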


\begin{proof}
[Proof of Lemma \ref{lem:amplitude in S1:2}]
In view of (\ref{eqn:definition of amplitude}) and (\ref{eqn:new amplitude}),
we have
$$
\mathcal{B}_k(y,\mathbf{v}_a(\tau),\mathbf{v}_2)=
\sum_{j=0}^{+\infty}\sum_{b=\lceil -j/2\rceil}^{j}F_{j,b}(\tau,k),
$$
where 
$$
F_{j,b}(\tau,k)=:\nu\left(k^{1/2-\epsilon}\,\tau\right)\,
\chi(\tau+\tau_a)\cdot e^{ikR_3(\tau,\mathbf{v}_1,\mathbf{v}_2)}\,
k^{b}Q_{j+2b}(y;\mathbf{v}_a(\tau),\mathbf{v}_2),
$$
Since the asymptotic expansion for $\Pi_k$ can be differentiated any number of times,
it suffices to prove that every $F_{j,b}\in S^{-3j\,(1/6-\epsilon)}_{1/2-\epsilon}(\mathbb{R}\times \mathbb{R}_+)$.

Thus we need to show that for any $j\ge 0$ and $\lceil -j/2\rceil\le b\le j$,
and every $l\in \mathbb{N}$ we have
\begin{equation}
 \label{eqn:bound on iterated derivatives}
 F_{j,b}^{(l)}(\tau,k)=:\frac{\partial^lF_{j,b}}{\partial\tau^l} (\tau,k)=O\left(k^{l(1/2-\epsilon)}\right) 
\end{equation}
as $k\rightarrow+\infty$.

On the support of $F_{j,b}$, we have
$$
\max\big\{|\tau|,\,\|\mathbf{v}_1\|,\,\|\mathbf{v}_2\|\big\}\le
D\,k^{\epsilon-1/2}
$$
for some $D\gg 0$. Therefore, the exponent in (\ref{eqn:new amplitude}) satisfies 
\begin{equation}
 \label{eqn:bound on exponent}
\big|k\,R_3(\tau,\mathbf{v}_1,\mathbf{v}_2)\big|=O\left(k^{-3(1/6-\epsilon)}\right).
\end{equation}
Similarly, on the same domain we have
\begin{eqnarray*}
 \left|k^{b}Q_{j+2b}(y;\mathbf{v}_a(\tau),\mathbf{v}_2)\right|\le D_1\,k^{b+(j+2b)\,(\epsilon-1/2)}\le 
D_1\,k^{-3j\,(1/6-\epsilon)}.
\end{eqnarray*}
Thus the statement is clear when $l=0$.

Let us make the inductive hypothesis that for $l\le l_0$
$F_{j,b}^{(l)}$ is a linear combination of terms of the form 
\begin{equation}
 \label{eqn:inductive form}
k^{b'+a'(1/2-\epsilon)}\,\widetilde{\nu}\left(k^{1/2-\epsilon}\,\tau\right)\,R_{c'}(\tau,\mathbf{v}_1,\mathbf{v}_2)
\cdot e^{ikR_3(\tau,\mathbf{v}_1,\mathbf{v}_2)},
\end{equation}
where 
\begin{equation}
 \label{eqn:inductive condition}
b'+(a'-c')\,\left(\frac{1}{2}-\epsilon\right)\le -3j\,\left(\frac{1}{6}-\epsilon\right)+ l\,\left(\frac{1}{2}-\epsilon\right).
\end{equation}
If we differentiate $F_{j,b}^{(l_0)}$ with respect to $\tau$, thus passing from $l_0$ to $l_0+1$,
we obtain by the Leibnitz rule the sum of three terms, as follows. The first has the form

\begin{equation}
 \label{eqn:inductive form1}
k^{b'+(a'+1)(1/2-\epsilon)}\,\widetilde{\nu}'\left(k^{1/2-\epsilon}\,\tau\right)\,R_{c'}(\tau,\mathbf{v}_1,\mathbf{v}_2)
\cdot e^{ikR_3(\tau,\mathbf{v}_1,\mathbf{v}_2)};
\end{equation}
the second has the form
\begin{equation}
 \label{eqn:inductive form2}
k^{b'+a'(1/2-\epsilon)}\,\widetilde{\nu}\left(k^{1/2-\epsilon}\,\tau\right)\,R_{c'-1}(\tau,\mathbf{v}_1,\mathbf{v}_2)
\cdot e^{ikR_3(\tau,\mathbf{v}_1,\mathbf{v}_2)},
\end{equation}
since $\partial_\tau R_{c'}=R_{c'-1}$; finally the third has the form
\begin{equation}
 \label{eqn:inductive form3}
k^{b'+a'(1/2-\epsilon)}\,\widetilde{\nu}\left(k^{1/2-\epsilon}\,\tau\right)\,R_{c'}(\tau,\mathbf{v}_1,\mathbf{v}_2)\,
k\,R_2(\tau,\mathbf{v}_1,\mathbf{v}_2)
\cdot e^{ikR_3(\tau,\mathbf{v}_1,\mathbf{v}_2)},
\end{equation}
since $\partial_\tau[\tau\,R_3]=k\,R_2$.

In case (\ref{eqn:inductive form1}), $(a',b',c')$ get replaced by $(a'+1,b',c')$, in case (\ref{eqn:inductive form2})
by $(a',b',c'-1)$, and in case (\ref{eqn:inductive form3}) by $(a',b'+1,c'+2)$. At any rate, the inductive step is complete.

\end{proof}

We can in fact rewrite the expansion for $\mathcal{B}_k$ as we did for $\mathcal{A}_k$.

\begin{lem}
\label{lem:expansion Bk}
We have the following asymptotic expansion:
\begin{eqnarray*}
\lefteqn{\mathcal{B}_k(\tau,\mathbf{v}_a(\tau),\mathbf{v}_2)}\\
&\sim&\nu\left(k^{1/2-\epsilon}\,\tau\right)\,
\chi(\tau+\tau_a)\cdot\left[1+\sum_{j=1}^{+\infty}\,\sum_{b=\lceil -j/2\rceil}^{j}k^{b}P_{j+2b}^{(j,b)}(y;\mathbf{v}_a(\tau),\mathbf{v}_2)\right],
\nonumber
\end{eqnarray*}
where $P_l^{(j,b)}$ is a homogeneous polynomial of degree $l$.
\end{lem}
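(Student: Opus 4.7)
The plan is to exploit the fact that the factor $e^{ikR_3(\tau,\mathbf{v}_1,\mathbf{v}_2)}$ distinguishing $\mathcal{B}_k$ from $\mathcal{A}_k$ is close to $1$ on the relevant domain. On the support of the cutoffs $\nu(k^{1/2-\epsilon}\tau)\chi(\tau+\tau_a)$, and under the constraint $\|\mathbf{v}_j\|\le Ck^{\epsilon-1/2}$, the uniform estimate $|kR_3|=O(k^{-3(1/6-\epsilon)})$ holds, so that $e^{ikR_3}$ admits the formal Taylor expansion
$$e^{ikR_3}\sim \sum_{n\ge 0}\frac{(ikR_3)^n}{n!},$$
with successive terms shrinking by a factor of order $k^{-3(1/6-\epsilon)}$. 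The argument already used in the proof of Lemma \ref{lem:amplitude in S1:2} guarantees that this expansion is asymptotic in the appropriate symbol class.

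Next, I would Taylor-expand each power $R_3^n$ at the origin into homogeneous polynomial components in $(\tau,\mathbf{v}_1,\mathbf{v}_2)$ starting at degree $3n$, plus a sufficiently high-order smooth remainder. Multiplying termwise with the given expansion for $\mathcal{A}_k$ produces a double series whose generic summand has the form
$$\frac{i^n}{n!}\,k^{n+b}\,U_m^{(n)}(\tau,\mathbf{v}_1,\mathbf{v}_2)\,Q_{j+2b}(y;\mathbf{v}_a(\tau),\mathbf{v}_2),$$
with $U_m^{(n)}$ homogeneous of degree $m\ge 3n$. Setting $j'=j+m-2n$ and $b'=b+n$, the expected form $k^{b'}P^{(j',b')}_{j'+2b'}$ emerges provided $\lceil -j'/2\rceil\le b'\le j'$, which I would check directly: the upper bound uses $m\ge 3n$ together with $b\le j$, and the lower bound uses $2b\ge -j$ together with $m\ge 3$. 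The parity condition is automatic since $j'+2b'$ and $(j+2b)+m$ share the same parity.

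The essential technical step is to re-express $U_m^{(n)}(\tau,\mathbf{v}_1,\mathbf{v}_2)$ as a homogeneous polynomial in the variables $(\mathbf{v}_a(\tau),\mathbf{v}_2)$. Inverting the defining relation $\mathbf{v}_a(\tau)=A_a\mathbf{v}_1-\tau\upsilon_f(n)+R_2(\tau,\mathbf{v}_1)$ gives $\mathbf{v}_1=A_a^{-1}\big(\mathbf{v}_a(\tau)+\tau\upsilon_f(n)\big)+\widetilde{R}_2\big(\tau,\mathbf{v}_a(\tau)\big)$; substituting, $U_m^{(n)}$ becomes a polynomial in $(\tau,\mathbf{v}_a(\tau),\mathbf{v}_2)$ of leading homogeneous degree $m$, plus smooth corrections whose Taylor series at the origin begin in degree strictly greater than $m$. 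Any residual explicit factor of $\tau$ is traded against a gain of $k^{\epsilon-1/2}$ using $|\tau|\le 2\beta Ck^{\epsilon-1/2}$ on the support, hence contributes only to higher orders of the asymptotic series. Regrouping all contributions by $(j',b')$ gives the stated form, with $y$-dependent coefficients $P^{(j,b)}_{j+2b}$.

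The main obstacle will be this substitution step: the change of variable from $\mathbf{v}_1$ to $\mathbf{v}_a(\tau)$ is smooth but not polynomial, so the correction terms must be shown to absorb consistently into higher orders of the final expansion without disrupting the homogeneous-polynomial structure claimed for the leading terms. The symbol-class framework of Lemma \ref{lem:amplitude in S1:2}, together with the uniform bound $|\tau|=O(k^{\epsilon-1/2})$ on the support, provides the bookkeeping required to justify these absorptions and to verify that the remainders at any finite order satisfy the symbol estimates of $S^{-3j(1/6-\epsilon)}_{1/2-\epsilon}(\mathbb{R}\times\mathbb{R}_+)$.
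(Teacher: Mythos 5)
Your proposal is correct in its essentials and follows the same strategy as the paper: expand $e^{ikR_3(\tau,\mathbf{v}_1,\mathbf{v}_2)}$ in powers of $ikR_3$ (with the remainder controlled by $|kR_3|=O\left(k^{-3(1/6-\epsilon)}\right)$, exactly as in (\ref{eqn:esponente espanso})--(\ref{eqn:bound on remainder})), multiply term by term with the expansion of $\mathcal{A}_k$, and re-index so that each resulting term sits at a level $(j',b')$ with $\lceil -j'/2\rceil\le b'\le j'$ and total degree at least $j'+2b'$; your index check ($b'\le j'$ from $b\le j$ and $m\ge 3n$, and the lower bound from $j+2b+m\ge 0$) matches the paper's computation with the substitution $j'=j+s$, $b'=b+s$. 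The one place you diverge is the step you yourself flag as the main obstacle: converting $U^{(n)}_m(\tau,\mathbf{v}_1,\mathbf{v}_2)$ into homogeneous polynomials in $(\mathbf{v}_a(\tau),\mathbf{v}_2)$ by inverting $\mathbf{v}_a(\tau)=A_a\mathbf{v}_1-\tau\,\upsilon_f(n)+R_2(\tau,\mathbf{v}_1)$ and "trading" residual powers of $\tau$ against factors $k^{\epsilon-1/2}$. The paper avoids this difficulty entirely: it substitutes (\ref{eqn:estimate displacement}) into the $Q_{j+2b}$'s and works throughout with monomials $k^{b'}\tau^{\rho_0}\mathbf{v}_1^{\rho_1}\mathbf{v}_2^{\rho_2}$, requiring only the lower bound $|\rho|\ge j'+2b'$ rather than exact homogeneity in $(\mathbf{v}_a(\tau),\mathbf{v}_2)$; this is precisely the form in which the lemma is used in (\ref{eqn:expansion Bk1}), and the explicit $\tau$-powers are handled later, at the stationary-phase stage, where $\tau^{\rho_0}$ is evaluated at the critical point $\tau_c$, which is linear in $(\mathbf{v}_1,\mathbf{v}_2)$, so no $k$-dependent absorption into the coefficients is ever needed. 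In short, your argument works, but the inversion-and-absorption step is self-imposed; keeping the expansion in the variables $(\tau,\mathbf{v}_1,\mathbf{v}_2)$ with a degree lower bound gives the same conclusion with cleaner bookkeeping and without perturbing the polynomial structure of the coefficients.
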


\begin{proof}
[Proof of Lemma \ref{lem:expansion Bk}]
First we notice that
for every $N\ge 0$ we have:
\begin{equation}
 \label{eqn:esponente espanso}
 e^{ikR_3(\tau,\mathbf{v}_1,\mathbf{v}_2)}=\sum_{l=0}^N\frac{k^l}{l!}\,R_{3l}(\tau,\mathbf{v}_1,\mathbf{v}_2)+
 O\left(k^{N+1}\,R_{3(N+1)}(\tau,\mathbf{v}_1,\mathbf{v}_2)\right).
\end{equation}
The remainder term satisfies
\begin{eqnarray}
 \label{eqn:bound on remainder}
\lefteqn{\left|k^{N+1}\,R_{3(N+1)}(\tau,\mathbf{v}_1,\mathbf{v}_2)\right|}\\
&=&O\left(k^{N+1+3(N+1)(\epsilon-1/2)}\right)=O\left(k^{-1/2+3\epsilon-3N(1/6-\epsilon)}\right).\nonumber
\end{eqnarray}

We can thus multiply the asymptotic expansions for $\mathcal{A}_k$ and for the exponential term.
Now the general term in the asymptotic expansion for $\mathcal{A}_k$ is a multiple
of 
$$
k^b\,\tau^{r_0}\,\mathbf{v}_1^{\mathbf{r}_1}\,\mathbf{v}_2^{\mathbf{r}_2},
$$
where $r_0\in \mathbb{N}_0$, $\mathbf{r}_1$ and $\mathbf{r}_2$ are multi-indexes
with $r_0+|\mathbf{r}_1|+|\mathbf{r}_2|\ge j+2b$,
and $\lceil -j/2\rceil\le b\le j$. On the other hand, the general term in the expansion 
for the exponential is a multiple of
$$
k^s\,\tau^{r'_0}\,\mathbf{v}_1^{\mathbf{r}'_1}\,\mathbf{v}_2^{\mathbf{r}'_2},
$$
where now 
$$
r'_0+|\mathbf{r}'_1|+|\mathbf{r}'_2|\ge 3s,
$$
with $s\ge 0$. If we then multiply the two expansions, we get that 
$$
e^{ikR_3(\tau,\mathbf{v}_1,\mathbf{v}_2)}\cdot \mathcal{A}_k(\tau,\mathbf{v}_1,\mathbf{v}_2)
$$
is a linear combination of terms of the form
\begin{equation}
 \label{eqn:general factored term}
k^{b+s}\,\tau^{r_0+r'_0}\,\mathbf{v}_1^{\mathbf{r}_1+\mathbf{r}'_1}\,\mathbf{v}_2^{\mathbf{r}_2+\mathbf{r}'_2}.
\end{equation}

Let us set $\rho_0=r_0+r'_0$, $\rho_j=\mathbf{r}_j+\mathbf{r}_j'$ for $j=1,2$,
$b'=b+s$, $j'=j+s$.
We can then rewrite (\ref{eqn:general factored term}) in the form
\begin{eqnarray}
 \label{eqn:factored term 1}
 \lefteqn{k^{b'}\,\tau^{\rho_0}\,\mathbf{v}_1^{\rho_1}\,\mathbf{v}_2^{\rho_2}},
\end{eqnarray}
with 
\begin{eqnarray}
 \label{eqn:total length multiindexes}
 \rho_0+|\rho_1|+|\rho_2|&=&(r_0+|\mathbf{r}_1|+|\mathbf{r}_2|)+(r'_0+|\mathbf{r}'_1|+|\mathbf{r}'_2|)\nonumber\\
& \ge&j+2b+3s=(j+s)+2\,(b+s)=j'+2\,b'.
\end{eqnarray}

On the other hand, we have
$$
\lceil -j'/2\rceil=\lceil -(j+s)/2\rceil\le \lceil -j/2\rceil +s \le b'=b+s\le j+s=j'.
$$
\end{proof}

Summing up, we have the following situation:
\begin{eqnarray}
 \label{eqn:summing up}
\lefteqn{\mathcal{G}_k\big(x+(\theta_1,\mathbf{v}_1),y+(\theta_2,\mathbf{v}_2)\big)\sim e^{ik(\theta_1-\theta_2)}}\\
&&\cdot \sum_a e^{ik(\vartheta_a-\tau_a E)}\,\left(\frac{k}{d}\right)^d\,
\int_{-\infty}^{+\infty}e^{ik\Upsilon_a(y,\mathbf{v}_1,\mathbf{v}_2;\tau)}\,\mathcal{B}_k(\tau,\mathbf{v}_a(\tau),\mathbf{v}_2)
\mathrm{d}\tau,\nonumber\end{eqnarray}
where, in view of Lemma \ref{lem:expansion Bk} and (\ref{eqn:estimate displacement}) we have
\begin{eqnarray}
\label{eqn:expansion Bk1}
\lefteqn{\mathcal{B}_k(\tau,\mathbf{v}_a(\tau),\mathbf{v}_2)}\\
&\sim&\nu\left(k^{1/2-\epsilon}\,\tau\right)\,
\chi(\tau+\tau_a)\cdot\left[1+\sum_{j=1}^{+\infty}\,\sum_{b=\lceil -j/2\rceil}^{j}k^{b}\sum_{|\rho|\ge j+2b}
\gamma_{jb\rho}\,\tau^{\rho_0}\,\mathbf{v}_1^{\rho_1}\,\mathbf{v}_2^{\rho_2}\right]
\nonumber\\
&\sim&\nu\left(k^{1/2-\epsilon}\,\tau\right)\,
\chi(\tau+\tau_a)\cdot\sum_{j=0}^{+\infty}\,\sum_{b=\lceil -j/2\rceil}^{j}k^{b}\sum_{|\rho|\ge j+2b}
\gamma_{jb\rho}\,\tau^{\rho_0}\,\mathbf{v}_1^{\rho_1}\,\mathbf{v}_2^{\rho_2},
\nonumber
\end{eqnarray}
where $\rho=(\rho_0,\rho_1,\rho_2)\in \mathbb{N}_0\times \mathbb{N}_0^{2d}\times \mathbb{N}_0^{2d}$ is a multi-index,
and $|\rho|=:\rho_0+|\rho_1|+|\rho_2|$ denotes its length; here $\gamma_{00\mathbf{0}}=1$.

Since the expansion may be integrated term by term, we are reduced to considering oscillatory integrals of the form
\begin{eqnarray}
 \label{eqn:ath summand 3}
I_\rho^{(a)}(y,\mathbf{v}_1,\mathbf{v}_2)&=& e^{ik\,(\vartheta_a-\tau_a E)}\,\left(\frac{k}{\pi}\right)^d\,k^b\int_{-\infty}^{+\infty}\,
e^{ik\widetilde{\Upsilon}_a(y,\mathbf{v}_1,\mathbf{v}_2;\tau)}\nonumber\\
&&\cdot\nu\left(k^{1/2-\epsilon}\,\tau\right)\,
\chi(\tau+\tau_a)\cdot\tau^{\rho_0}\,\mathbf{v}_1^{\rho_1}\,\mathbf{v}_2^{\rho_2}
\,\mathrm{d}\tau,
\end{eqnarray}
where $|\rho|\ge j+2b$, with $\lceil-j/2\rceil\le b\le j$ for some $j\ge 0$.
By the previous considerations, furthermore, we are in a position to apply the stationary phase Lemma 
for complex valued phase functions (\cite{MelSjo}, \cite{hor}).

In view of (\ref{eqn:new phase}), the study of the critical points of $\widetilde{\Upsilon}_a$ is clearly reduced to the
study of critical points of $\Upsilon_a$ (recall (\ref{eqn:defn of phase}) and (\ref{eqn:new phase}). 
The following Lemma is proved by a straightforward computation, that we shall leave to the reader:

\begin{lem}
 \label{lem:critical point}
 The phase $\Upsilon _a(y,\mathbf{v}_1,\mathbf{v}_2;\cdot)$ has a unique critical point 
 $\tau_c=\tau_c(n,\mathbf{v}_1,\mathbf{v}_2)$, given by 
 $$
\tau_c=\frac{1}{\|\upsilon_f(n)\|^2}\cdot \Big[g_n\big(\upsilon_f(n),A_a\mathbf{v}_1-\mathbf{v}_2\big)
+i\,\omega_n\big(\upsilon_f(n),A_a\mathbf{v}_1+\mathbf{v}_2\big)\Big].
$$ Furthermore, 
 $$\partial^2_\tau \Upsilon _a(y,\mathbf{v}_1,\mathbf{v}_2;\tau_c)=i\,\|\upsilon_f(n)\|^2,$$
 so that the critical point is non-degenerate. In addition,
 at the critical point we have
\begin{eqnarray}
 \label{eqn:critical value phase}
\lefteqn{\Upsilon _a(y,\mathbf{v}_1,\mathbf{v}_2;\tau_c)}\\
&=&-\dfrac{i}{2\,\|\upsilon_f(n)\|^2}\,\Big[g_n\big(\upsilon_f(n),A_a\mathbf{v}_1-\mathbf{v}_2\big)^2
 -\omega_n\big(\upsilon_f(n),A_a\mathbf{v}_1+\mathbf{v}_2\big)^2\nonumber\\
 &&+2i\,
g_n\big(\upsilon_f(n),A_a\mathbf{v}_1-\mathbf{v}_2\big)\cdot 
\omega_n\big(\upsilon_f(n),A_a\mathbf{v}_1+\mathbf{v}_2\big) \Big].\nonumber
\end{eqnarray}

\end{lem}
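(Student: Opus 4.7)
The plan is a direct but careful calculation exploiting that $\Upsilon_a$ is a quadratic polynomial in $\tau$ with complex coefficients and non-zero leading term. Introducing the shorthands $L = \|\upsilon_f(n)\|^2$, $\alpha = g_n\bigl(\upsilon_f(n), A_a\mathbf{v}_1 - \mathbf{v}_2\bigr)$ and $\beta = \omega_n\bigl(\upsilon_f(n), A_a\mathbf{v}_1 + \mathbf{v}_2\bigr)$, formula (\ref{eqn:defn of phase}) reads
$$\Upsilon_a(y,\mathbf{v}_1,\mathbf{v}_2;\tau) = \beta\,\tau + \frac{i}{2}\bigl(L\tau^2 - 2\alpha\tau\bigr).$$
Since $(x,y) \in \widetilde{\mathcal{R}}_\chi(f,E)'$ we have $\upsilon_f(m)\neq 0$, and $\phi^M$-invariance of $f$ then forces $\upsilon_f(n)\neq 0$ (the remark immediately after (\ref{eqn:defnRchiprime})); hence $L>0$. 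The critical-point equation $\partial_\tau \Upsilon_a = \beta + iL\tau - i\alpha = 0$ is linear with non-zero leading coefficient $iL$, so it has the unique solution $\tau_c = (\alpha + i\beta)/L$, which is the asserted formula.

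For non-degeneracy, the second derivative is the constant $\partial_\tau^2 \Upsilon_a = iL \neq 0$, as claimed. For the critical value, I would substitute $\tau_c$ back into $\Upsilon_a$. The most efficient route is to use the critical-point equation itself in the form $L\tau_c = \alpha + i\beta$ together with the identity $\beta - i\alpha = -i(\alpha + i\beta) = -iL\tau_c$. Grouping the terms of $\Upsilon_a(\tau_c) = \tau_c(\beta - i\alpha) + (iL/2)\,\tau_c^{2}$ then collapses to
$$\Upsilon_a(y,\mathbf{v}_1,\mathbf{v}_2;\tau_c) = -\frac{iL}{2}\,\tau_c^{2} = -\frac{i\,(\alpha + i\beta)^{2}}{2L},$$
after which expanding $(\alpha + i\beta)^{2} = \alpha^{2} - \beta^{2} + 2i\alpha\beta$ recovers precisely (\ref{eqn:critical value phase}).

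There is no real obstacle: the entire lemma is pure linear algebra in one complex variable, which is why the author labels it a straightforward computation. The only mild pitfalls are bookkeeping of signs and factors of $i$ in the substitution step, and remembering that the hypothesis $\upsilon_f(m)\neq 0$ is genuinely needed (both to ensure uniqueness of $\tau_c$ and to produce a non-degenerate Hessian), which will matter when this lemma is fed into the complex stationary phase expansion in the remainder of the proof of Theorem \ref{thm:scaling asymptotics}.
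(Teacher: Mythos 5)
Your computation is correct: with $L=\|\upsilon_f(n)\|^2$, $\alpha=g_n\big(\upsilon_f(n),A_a\mathbf{v}_1-\mathbf{v}_2\big)$, $\beta=\omega_n\big(\upsilon_f(n),A_a\mathbf{v}_1+\mathbf{v}_2\big)$, the phase is the quadratic $\beta\tau+\tfrac{i}{2}(L\tau^2-2\alpha\tau)$, and your derivation of $\tau_c=(\alpha+i\beta)/L$, the constant Hessian $iL$, and the critical value $-\tfrac{i}{2L}(\alpha+i\beta)^2$ reproduces the lemma exactly, including the observation that $\upsilon_f(n)\neq 0$ (hence $L>0$) is what guarantees existence, uniqueness and non-degeneracy. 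The paper leaves this lemma to the reader as a \lq straightforward computation\rq, and yours is precisely that intended computation, so there is nothing to add.
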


Let us clarify (\ref{eqn:critical value phase}) in light of the decomposition in Definition
\ref{defn:tvh components}. To this end let us write: 
\begin{equation}
 \label{eqn:basic splitting vj}
 \mathbf{v}_j=\mathbf{v}_{j\mathrm{t}}+\mathbf{v}_{j\mathrm{v}}+\mathbf{v}_{j\mathrm{h}} \,\,\,\,\,\,
 \,\,\,\,\,\,\,\,\,\,\,
 (j=1,\,2).
\end{equation}
Since $A$ is unitary and leaves $\upsilon_f(n)$ invariant, it preserves the previous decomposition, meaning
that $(A\mathbf{v}_{j})_\mathrm{t}=A\mathbf{v}_{j\mathrm{t}}$, and so forth.
Hence we have
\begin{eqnarray}
\label{eqn:psi2 decomposition}
 \psi_2(A_a\mathbf{v}_1,\mathbf{v}_2) &=&\psi_2(A_a\mathbf{v}_{1\mathrm{h}},\mathbf{v}_{2\mathrm{h}})+
 \psi_2(A_a\mathbf{v}_{1\mathrm{t}}+A_a\mathbf{v}_{1\mathrm{v}},\mathbf{v}_{2\mathrm{t}}+\mathbf{v}_{2\mathrm{v}})
 \nonumber\\
 &=&\psi_2(A_a\mathbf{v}_{1\mathrm{h}},\mathbf{v}_{2\mathrm{h}})
 -i\,\omega_n (A_a\mathbf{v}_{1\mathrm{t}},\mathbf{v}_{2\mathrm{v}})-i\,\omega_n (A_a\mathbf{v}_{1\mathrm{v}},\mathbf{v}_{2\mathrm{t}})
 \nonumber\\
 &&-\frac{1}{2}\,\|A_a\mathbf{v}_{1\mathrm{t}}-\mathbf{v}_{2\mathrm{t}}\|^2
 -\frac{1}{2}\,\|A_a\mathbf{v}_{1\mathrm{v}}-\mathbf{v}_{2\mathrm{v}}\|^2.
\end{eqnarray}
Furthermore, it is evident that
\begin{eqnarray}
 \label{eqn:g_n decomposed}
 g_n\big(\upsilon_f(n),A_a\mathbf{v}_1-\mathbf{v}_2\big)^2=\|\upsilon_f(n)\|^2\cdot 
 \|A_a\mathbf{v}_{1\mathrm{v}}-\mathbf{v}_{2\mathrm{v}}\|^2
\end{eqnarray}
and that
\begin{equation}
 \label{eqn:omega_n decomposed}
 \omega_n\big(\upsilon_f(n),A_a\mathbf{v}_1+\mathbf{v}_2\big)^2=\|\upsilon_f(n)\|^2\cdot 
 \|A_a\mathbf{v}_{1\mathrm{t}}+\mathbf{v}_{2\mathrm{t}}\|^2.
\end{equation}

Therefore, we can write
\begin{eqnarray}
 \label{eqn:cirtical value total phase}
i\widetilde{\Upsilon}_a(y,\mathbf{v}_1,\mathbf{v}_2;\tau_c)&=&
 \psi_2(A_a\mathbf{v}_1,\mathbf{v}_2)+i\Upsilon _a(y,\mathbf{v}_1,\mathbf{v}_2;\tau)\nonumber\\
&=&\psi_2(A_a\mathbf{v}_{1\mathrm{h}},\mathbf{v}_{2\mathrm{h}})-\|A_a\mathbf{v}_{1\mathrm{t}}\|^2-\|\mathbf{v}_{2\mathrm{t}}\|^2
+i\,\mathcal{H},
\end{eqnarray}
where
\begin{eqnarray}
 \label{eqn:defn di H}
\mathcal{H}&=:&-\omega_n (A_a\mathbf{v}_{1\mathrm{t}},\mathbf{v}_{2\mathrm{v}})-\omega_n (A_a\mathbf{v}_{1\mathrm{v}},\mathbf{v}_{2\mathrm{t}})\\
&&
+\frac{1}{\|\upsilon_f(n)\|^2} \, g_n\big(\upsilon_f(n),A_a\mathbf{v}_1-\mathbf{v}_2\big)\cdot 
\omega_n\big(\upsilon_f(n),A_a\mathbf{v}_1+\mathbf{v}_2\big). \nonumber
\end{eqnarray}

To compute $\mathcal{H}$, let us write
$$
A_a\mathbf{v}_1=\alpha_1\,\upsilon_f(n)+\beta_1\,J_n\,\upsilon_f(n)+A_a\mathbf{v}_{1\mathrm{h}},
$$
$$
\mathbf{v}_2=\alpha_2\,\upsilon_f(n)+\beta_2\,J_n\,\upsilon_f(n)+\mathbf{v}_{2\mathrm{h}},
$$
for appropriate $\alpha_j,\,\beta_j\in \mathbb{R}$. Then clearly
$$
\omega_n (A_a\mathbf{v}_{1\mathrm{t}},\mathbf{v}_{2\mathrm{v}})=-\alpha_2\,\beta_1\,\|\upsilon_f(n)\|^2,
\,\,\,\,\,\,
\omega_n (A_a\mathbf{v}_{1\mathrm{v}},\mathbf{v}_{2\mathrm{t}})=
\alpha_1\,\beta_2\,\|\upsilon_f(n)\|^2;
$$
furthermore,
$$
g_n\big(\upsilon_f(n),A_a\mathbf{v}_1-\mathbf{v}_2\big)=
(\alpha_1-\alpha_2)\,\|\upsilon_f(n)\|^2,
$$
and
$$
\omega_n\big(\upsilon_f(n),A_a\mathbf{v}_1+\mathbf{v}_2\big)=(\beta_1+\beta_2)\,\|\upsilon_f(n)\|^2.
$$
Hence we may rewrite (\ref{eqn:defn di H}) as follows:
\begin{eqnarray}
\label{eqn:computation of H}
 \mathcal{H}
&=&\|\upsilon_f(n)\|^2\,\big(\alpha_2\,\beta_1-\alpha_1\,\beta_2+(\alpha_1-\alpha_2)\,(\beta_1+\beta_2)\big)\\
&=&\|\upsilon_f(n)\|^2\,\big(\alpha_1\beta_1
-\alpha_2\,\beta_2\big)=\omega_n\big(A_a\mathbf{v}_{1\mathrm{v}},A_a\mathbf{v}_{1\mathrm{t}}\big)-
\omega_n\big(A_a\mathbf{v}_{2\mathrm{v}},A_a\mathbf{v}_{2\mathrm{t}}\big).\nonumber
\end{eqnarray}

Recalling (\ref{eqn:defn of Q}), we conclude from (\ref{eqn:cirtical value total phase})
and (\ref{eqn:computation of H}) that
\begin{eqnarray}
 \label{eqn:cirtical value total phase 1}
i\widetilde{\Upsilon}_a(y,\mathbf{v}_1,\mathbf{v}_2;\tau_c)&=&\psi_2(A_a\mathbf{v}_{1\mathrm{h}},\mathbf{v}_{2\mathrm{h}})
-\|A_a\mathbf{v}_{1\mathrm{t}}\|^2-\|\mathbf{v}_{2\mathrm{t}}\|^2\nonumber\\
&&+i\,\Big[\omega_n\big(A_a\mathbf{v}_{1\mathrm{v}},A_a\mathbf{v}_{1\mathrm{t}}\big)-
\omega_n\big(A_a\mathbf{v}_{2\mathrm{v}},A_a\mathbf{v}_{2\mathrm{t}}\big)\Big]\nonumber\\
&=&\mathcal{Q}(A_a\mathbf{v}_{1},\mathbf{v}_{2}).
\end{eqnarray}

Since $\widetilde{\Upsilon}_a$ is a polynomial of degree two in $\tau$, there is no third order remainder to take care
of in the stationary phase expansion for (\ref{eqn:ath summand 3}).
In addition, upon choosing the constant $\beta$ in Proposition \ref{prop:localizzazione in tau}
sufficiently large, we may assume that $\rho\left(k^{1/2-\epsilon}\,\tau\right)\equiv 1$
near the critical point.

Thus the asymptotic expansion coming from each summand (\ref{eqn:ath summand 3})
is
\begin{eqnarray}
 \label{eqn:ath summand expanded}
\lefteqn{I_\rho^{(a)}(y,\mathbf{v}_1,\mathbf{v}_2)
\sim e^{k\,[i(\vartheta_a-\tau_a E)+\mathcal{Q}(A_a\mathbf{v}_{1},\mathbf{v}_{2})]}\,
\left(\frac{k}{\pi}\right)^d\,k^{b-1/2}\,\frac{\sqrt{2\pi}}{\|\upsilon_f(n\|}}\nonumber\\
&&\cdot\sum_{r=0}^{\infty}\frac{1}{r!}\,\left(-2k\,\|\upsilon_f(n)\|^2\right)^{-r}\,
\left.\frac{\partial^{2r}}{\partial\tau^{2r}}\left[\chi(\tau+\tau_a)\cdot\tau^{\rho_0}
\right]\right|_{\tau=\tau_c}\,\mathbf{v}_1^{\rho_1}\,\mathbf{v}_2^{\rho_2}.
\end{eqnarray}
Sice $\tau_c$ is a complex number, in the previous expression $\chi$ should be really replaced by 
an almost analytic extension, that we still denote by $\chi$ for notational simplicity.
On the other hand, with the same abuse of notation, we have
\begin{eqnarray}
 \label{eqn:derivative term}
 \left.\frac{\partial^{2r}}{\partial\tau^{2r}}\left[\chi(\tau+\tau_a)\cdot\tau^{\rho_0}
\right]\right|_{\tau=\tau_c}
&=&\sum_{l=0}^{\min\{2r,\rho_0\}}C_{rl}\,\chi^{(2r-l)}(\tau_c+\tau_a)\,\tau_c^{\rho_0-l}.
\end{eqnarray}
If we Taylor expand $\chi^{(2r-l)}(\cdot+\tau_a)$ at $\tau=0$ to estimate
$\chi^{(2r-l)}(\tau_c+\tau_a)$ asymptotically, we end up with an asymptotic expansion of the form
\begin{equation}
 \label{eqn:expand chi derivative}
\chi^{(2r-l)}(\tau_c+\tau_a)\sim \sum_{s\ge 0}\frac{1}{s!}\,\chi^{(2r-l+s)}(\tau_a)\,\tau_c^s .
\end{equation}

Therefore,
\begin{eqnarray}
 \label{eqn:derivative term1}
 \left.\frac{\partial^{2r}}{\partial\tau^{2r}}\left[\chi(\tau+\tau_a)\cdot\tau^{\rho_0}
\right]\right|_{\tau=\tau_c}
\sim\sum_{l=0}^{\min\{2r,\rho_0\}}\sum_{s\ge 0}\frac{C_{rl}}{s!}\,\chi^{(2r-l+s)}(\tau_a)\,\tau_c^{\rho_0+s-l}.
\end{eqnarray}
Inserting (\ref{eqn:derivative term1}) in (\ref{eqn:ath summand expanded}), we obtain
\begin{eqnarray}
 \label{eqn:ath summand expanded 1}
I_\rho^{(a)}(y,\mathbf{v}_1,\mathbf{v}_2)
&\sim& e^{k\,[i(\vartheta_a-\tau_a E)+\mathcal{Q}(A_a\mathbf{v}_{1},\mathbf{v}_{2})]}\,
\left(\frac{k}{\pi}\right)^d\,k^{b-1/2}\,\frac{\sqrt{2\pi}}{\|\upsilon_f(n\|}\\
&&\cdot\sum_{r=0}^{\infty}k^{-r}
\sum_{l=0}^{\min\{2r,\rho_0\}}\sum_{s\ge 0}C_{rls}^{(a)}\,\chi^{(2r-l+s)}(\tau_a)\,\tau_c^{\rho_0+s-l}\,
\mathbf{v}_1^{\rho_1}\,\mathbf{v}_2^{\rho_2}.\nonumber
\end{eqnarray}

Now by Lemma \ref{lem:critical point} $\tau_c$ is linear in $\mathbf{v}_1$ and $\mathbf{v}_2$;
therefore, we can rewrite (\ref{eqn:ath summand expanded 1}) as
\begin{eqnarray}
 \label{eqn:ath summand expanded 2}
I_\rho^{(a)}(y,\mathbf{v}_1,\mathbf{v}_2)
&\sim& e^{k\,[i(\vartheta_a-\tau_a E)+\mathcal{Q}(A_a\mathbf{v}_{1},\mathbf{v}_{2})]}\,
\left(\frac{k}{\pi}\right)^d\,k^{b-1/2}
\\
&&\cdot\sum_{r=0}^{\infty}k^{-r}\,
\sum_{l=0}^{\min\{2r,\rho_0\}}\sum_{s\ge 0}\,\chi^{(2r-l+s)}(\tau_a)\,P_{|\rho|+s-l}^{(a)}(n;\mathbf{v}_1,\mathbf{v}_2),\nonumber
\end{eqnarray}
where $P_{\ell}^{(a)}(n;\cdot,\cdot)$ is a homogeneous polynomial of degree $\ell$.

If we insert (\ref{eqn:ath summand expanded 2}) in (\ref{eqn:summing up}), with 
the amplitude $\mathcal{B}_k$ given by the asymptotic expansion
(\ref{eqn:expansion Bk1}), we obtain the following asymptotic expansion:
\begin{eqnarray}
 \label{eqn:summing up1}
\lefteqn{\mathcal{G}_k\big(x+(\theta_1,\mathbf{v}_1),y+(\theta_2,\mathbf{v}_2)\big)}\\
&\sim& e^{ik(\theta_1-\theta_2)}\,\left(\frac{k}{\pi}\right)^d
\cdot \frac{\sqrt{2\pi}}{\|\upsilon_f(n)\|}\,\sum_a e^{k\,[i(\vartheta_a-\tau_a E)+\mathcal{Q}(A_a\mathbf{v}_{1},\mathbf{v}_{2})]}\nonumber\\
&&\cdot
\sum_{j=0}^{+\infty}\,\sum_{b=\lceil -j/2\rceil}^{j}k^{b-1/2}\sum_{|\rho|\ge j+2b}\sum_{r=0}^{\infty}k^{-r}\nonumber\\
&&\cdot
\sum_{l=0}^{\min\{2r,\rho_0\}}\sum_{s\ge 0}\,\chi^{(2r-l+s)}(\tau_a)\,P_{|\rho|+s-l}^{(ajb\rho ls)}(n;\mathbf{v}_1,\mathbf{v}_2),\nonumber\end{eqnarray}
where $P_{\ell}^{(ajb\rho ls)}(n;\cdot,\cdot)$ is homogeneous of degree $\ell$.

Let us now pass to rescaled coordinates, thus replacing $\mathbf{v}_j$ by $\mathbf{v}_j/\sqrt{k}$, we may express the 
previous expansion in the following manner:
\begin{eqnarray}
 \label{eqn:summing up2}
\lefteqn{\mathcal{G}_k\left(x+\left(\theta_1,\frac{\mathbf{v}_1}{\sqrt{k}}\right),y+\left(\theta_2,\dfrac{\mathbf{v}_2}{\sqrt{2}}\right)\right)}\\
&\sim& e^{ik(\theta_1-\theta_2)}\,\left(\frac{k}{\pi}\right)^{d-1/2}
\cdot \frac{\sqrt{2}}{\|\upsilon_f(n)\|}\,\sum_a e^{ik\,(\vartheta_a-\tau_a E)+\mathcal{Q}(A_a\mathbf{v}_{1},\mathbf{v}_{2})}\nonumber\\
&&\cdot
\sum_{j=0}^{+\infty}\,\sum_{b=\lceil -j/2\rceil}^{j}\sum_{|\rho|\ge j+2b}\sum_{r=0}^{\infty}
\sum_{l=0}^{\min\{2r,\rho_0\}}\sum_{s\ge 0}k^{-\frac{1}{2}(2r-2b+|\rho|+s-l)}\nonumber\\
&&\cdot\chi^{(2r-l+s)}(\tau_a)\,
P_{|\rho|+s-l}^{(ajb\rho ls)}(n;\mathbf{v}_1,\mathbf{v}_2),\nonumber
\end{eqnarray}

Let us remark that, since $|\rho|\ge j+2b$,
\begin{equation}
 \label{eqn:only nonnegative powers}
2r-2b+|\rho|+s-l= (2r-l)+(|\rho|-2b)+s\ge (2r-l)+j+s\ge 0;
\end{equation}
therefore, only integer or half-integer powers of $k$ of 
the form $k^{d-1/2-\ell/2}$ with $\ell\ge 0$ contribute to the asymptotic expansion.

It is also clear from (\ref{eqn:summing up2}) and (\ref{eqn:only nonnegative powers}) 
that the coefficient of $k^{d-1/2-\ell/2}$ is the evaluation at $\tau_a$ of a differential polynomial in $\chi$
of degree $\le \ell$.

Let us first consider the contribution with $\ell=0$.

\begin{lem}
 \label{lem:ell=0}
$2r-2b+|\rho|+s-l=0$ if and only if $r=l=j=b=s=0$ and $\rho=\mathbf{0}$. 
\end{lem}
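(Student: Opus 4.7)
The plan is to read the identity $2r-2b+|\rho|+s-l=(2r-l)+(|\rho|-2b)+s$ from equation (\ref{eqn:only nonnegative powers}) and exploit the fact that each of the three grouped summands is separately non-negative. Since the sum is $0$, each must vanish, and I can then cascade the constraints on the indexing to conclude that everything collapses to the trivial contribution.

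More concretely, I would argue as follows. The constraint $0 \le l \le 2r$ from the inner sum gives $2r-l \ge 0$; the constraint $|\rho|\ge j+2b$ gives $|\rho|-2b\ge j\ge 0$; and clearly $s\ge 0$. If the total is zero, each term is zero. From $|\rho|-2b=0$ combined with $|\rho|-2b\ge j\ge 0$, I immediately get $j=0$. Once $j=0$, the range $\lceil -j/2\rceil\le b\le j$ forces $b=0$, and then $|\rho|=2b=0$ gives $\rho=\mathbf{0}$ (so in particular $\rho_0=0$). With $\rho_0=0$, the range $0\le l\le \min\{2r,\rho_0\}$ forces $l=0$, and $2r-l=0$ then yields $r=0$. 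Finally $s=0$ was one of the three vanishing conditions.

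The converse is a direct substitution: if $r=l=j=b=s=0$ and $\rho=\mathbf{0}$, then $2r-2b+|\rho|+s-l=0$ trivially.

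There really is no obstacle here; the lemma is a bookkeeping statement about summation ranges, and the only thing to be careful about is chasing the constraints in the correct order, namely $j=0\Rightarrow b=0\Rightarrow \rho=\mathbf{0}\Rightarrow \rho_0=0\Rightarrow l=0\Rightarrow r=0$, with $s=0$ coming for free from the non-negativity split.
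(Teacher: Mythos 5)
Your proof is correct and follows essentially the same route as the paper: it uses the grouping $2r-2b+|\rho|+s-l=(2r-l)+(|\rho|-2b)+s$ from (\ref{eqn:only nonnegative powers}), the constraints $l\le\min\{2r,\rho_0\}$, $|\rho|\ge j+2b$ and $\lceil -j/2\rceil\le b\le j$, and the same cascade $j=s=0\Rightarrow b=0\Rightarrow\rho=\mathbf{0}\Rightarrow l=0\Rightarrow r=0$. Nothing further is needed.
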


Thus for each $a$ there is only one summand in (\ref{eqn:summing up2})
contributing to the power $k^{d-1/2}$, and it is readily seen from (\ref{eqn:expand chi derivative}) that its coefficient 
is $\chi(\tau_a)$.

\begin{proof}
[Proof of Lemma \ref{lem:ell=0}.]
By (\ref{eqn:only nonnegative powers}), if $2r-2b+|\rho|+s-l=0$ then $2r=l$, $j=s=0$ and $|\rho|=2b$; given that 
$\lceil -j/2\rceil\le b\le j$, then also $b=0$ and so $|\rho|=0$. On the other hand
$l\le \min\{2r,\rho_0\}$,
and so $l=0$ and thus $r=0$. 
\end{proof}

\begin{lem}
 \label{lem:finitely many terms}
For any $n_0\in \mathbb{N}$ there are only finitely many summands
in (\ref{eqn:summing up2}) with $2r-2b+|\rho|+s-l=n_0$.
\end{lem}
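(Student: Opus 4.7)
The plan is to exploit the nonnegative splitting in (\ref{eqn:only nonnegative powers}) to show that each of the summation indices $j$, $b$, $\rho$, $r$, $l$, $s$ is individually bounded in terms of $n_0$, whence the collection of admissible tuples is finite.

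First I would recall from (\ref{eqn:only nonnegative powers}) that
\begin{equation*}
2r-2b+|\rho|+s-l = (2r-l)+(|\rho|-2b)+s,
\end{equation*}
where each of the three summands is nonnegative, the middle one because of the constraint $|\rho|\ge j+2b$ in (\ref{eqn:expansion Bk1}), the first because $l\le 2r$, and the third trivially. Hence if the left-hand side equals $n_0$, then each of the three nonnegative summands is at most $n_0$. In particular $s\le n_0$, $2r-l\le n_0$, and $|\rho|-2b\le n_0$.

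Next I would bound the \textbf{geometric} indices $j$, $b$, $|\rho|$. Since $|\rho|-2b\ge j\ge 0$, the bound $|\rho|-2b\le n_0$ gives $0\le j\le n_0$; the constraint $\lceil -j/2\rceil\le b\le j$ then yields $|b|\le n_0$. Consequently
\begin{equation*}
|\rho|\;\le\;n_0+2b\;\le\;3\,n_0,
\end{equation*}
and in particular $\rho_0\le 3n_0$, so $\rho$ ranges over a finite set of multi-indices.

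Finally I would bound $r$ and $l$. Since $l\le\min\{2r,\rho_0\}\le 3n_0$ by the previous paragraph, $l$ has finitely many values. Combined with $2r\le l+n_0\le 4n_0$, this yields $r\le 2n_0$. Thus all of $j$, $b$, $\rho$, $r$, $l$, $s$ lie in finite sets determined by $n_0$, and so only finitely many summands in (\ref{eqn:summing up2}) contribute to the power $k^{d-1/2-n_0/2}$. I do not foresee any serious obstacle here: the bookkeeping is elementary once one observes that the constraint $|\rho|\ge j+2b$ is precisely what forces $|\rho|$ (and hence $\rho_0$, and hence $l$, and hence $r$) to be controlled by $n_0$.
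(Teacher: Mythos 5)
Your argument is correct and is essentially the paper's own proof: both rest on the decomposition $2r-2b+|\rho|+s-l=(2r-l)+(|\rho|-2b)+s$ into nonnegative pieces (using $l\le 2r$ and $|\rho|\ge j+2b$), from which $s$, $2r-l$, $|\rho|-2b$, and then $j$, $b$, $|\rho|$, $\rho_0$, $l$, $r$ are all bounded in terms of $n_0$ exactly as in the text. No gap to report.
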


Thus the sum of all polynomials $P_{|\rho|+s-l}^{(ajb\rho ls)}(n;\mathbf{v}_1,\mathbf{v}_2)$ with 
$2r-2b+|\rho|+s-l=n_0$ is itself a polynomial.

\begin{proof}
[Proof of Lemma \ref{lem:finitely many terms}.]
If
$2r-2b+|\rho|+s-l=n_0$, then $0\le j\le (2r-l)+s+j\le (2r-l)+s+(|\rho|-2b)=n_0$, and so 
$\lceil-n_0/2\rceil\le b\le n_0$. Consequently, $|\rho|-2n_0\le |\rho|-2b\le 
(2r-l)+s+(|\rho|-2b)=n_0$.
Thus, $|\rho|\le 3n_0$, and so also $l\le \rho_0\le |\rho|\le 3n_0$.
Therefore, we also have $2r\le l+n_0\le 4n_0$. Similarly,
$0\le s\le (2r-l)+s+(|\rho|-2b)=n_0$.
\end{proof}

\begin{lem}
 \label{lem:3bound}
For any summand in (\ref{eqn:summing up2}), we have
$|\rho|+s-l\le 3\,(2r-2b+|\rho|+s-l)$.
\end{lem}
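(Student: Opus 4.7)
The plan is to reduce the claimed inequality to an elementary comparison among the indices $(j, b, \rho, r, l, s)$ subject to the constraints listed in the construction of the expansion \eqref{eqn:summing up2}, and then to exploit the bound $|\rho|\ge j+2b$ from \eqref{eqn:expansion Bk1} together with the constraint $l\le\min\{2r,\rho_0\}$ coming from \eqref{eqn:derivative term}.

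More precisely, the desired inequality $|\rho|+s-l\le 3\,(2r-2b+|\rho|+s-l)$ is equivalent, after rearrangement, to
\begin{equation*}
l \;\le\; |\rho|+3r-3b+s,
\end{equation*}
so it suffices to verify the latter under the standing constraints
\begin{equation*}
\lceil -j/2\rceil\le b\le j,\qquad |\rho|\ge j+2b,\qquad 0\le l\le \min\{2r,\rho_0\}\le 2r.
\end{equation*}

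I would then split into two cases according to the sign of $b$. If $b\le 0$, then $-3b\ge 0$, so the upper bound $l\le 2r$ gives
\begin{equation*}
l\le 2r\le |\rho|+3r-3b+s,
\end{equation*}
and we are done. If $b>0$, then from $b\le j$ we get $j\ge b$, hence $|\rho|\ge j+2b\ge 3b$; combining this with $l\le 2r$ yields
\begin{equation*}
l\le 2r \le 3r+s+(|\rho|-3b)=|\rho|+3r-3b+s,
\end{equation*}
which again is the required inequality.

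There is no real obstacle here: the only subtle point is to notice that the constraints $b\le j$ and $|\rho|\ge j+2b$ together force $|\rho|\ge 3b$ for $b>0$, which is exactly the bookkeeping needed to control the polynomial degree by three times the order in $k^{-1/2}$, consistently with the polynomial degree statement in Theorem \ref{thm:scaling asymptotics}.
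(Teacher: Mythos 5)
Your proof is correct and follows essentially the same route as the paper: both rearrange the claimed inequality and then conclude from the two constraints $l\le\min\{2r,\rho_0\}\le 2r$ and $|\rho|\ge j+2b\ge 3b$ (the latter using $b\le j$). Your case distinction on the sign of $b$ is harmless but unnecessary, since the single chain of inequalities works uniformly in $b$, exactly as in the paper's argument.
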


Thus, the polynomials in $(\mathbf{v}_1,\mathbf{v}_2)$ contributing to a given power
$k^{d-1/2-\ell/2}$ in (\ref{eqn:summing up2}) all have degree $\le 3\ell$.

\begin{proof}
[Proof of Lemma \ref{lem:3bound}]
The claimed inequality is equivalent to 
$
6b+2l\le 6r+2|\rho|+2s$. 
On the other hand, given that $l\le 2r$ and $|\rho|\ge j+2b\ge 3b$,
$$6b+2l\le 6b+4r\le 6b +6r \le 2|\rho|+6r\le 6r+2|\rho|+2s.$$
\end{proof}

It is furthermore also evident that $|\rho|+s-l$ and $2r-2b+|\rho|+s-l$ have the same parity.
Thus each polynomial in $(\mathbf{v}_1,\mathbf{v}_2)$ contributing to a given power
$k^{d-1/2-\ell/2}$ in (\ref{eqn:summing up2}) has parity $(-1)^\ell$.

The proof of Theorem \ref{thm:scaling asymptotics} is complete.

\end{proof}

\section{Proof of Theorem \ref{thm:trace of G_k}}

\begin{proof}[Proof of Theorem \ref{thm:trace of G_k}]
 
Let us define 
\begin{equation}
 \label{eqn:defn of Gk diagonal}
 \mathfrak{G}_k(m)=:\mathcal{G}_k(x,x)\,\,\,\,\mathrm{if}\,\,\,\,x\in X_m.
\end{equation}
The definition is well-posed, and $\mathfrak{G}_k\in \mathcal{C}^\infty (M)$.

The following is an immediate consequence of Theorem \ref{thm:rapid decrease}.
\begin{cor}
 \label{cor:rapid decay diagonal}
 Suppose that $E$ is a regular value of $f$. Then,
 for any $C>0$ and $\epsilon>0$, we have
 $\mathfrak{G}_k(m)=O\left(k^{-\infty}\right)$, 
 uniformly for 
 $$
 \max\left\{\mathrm{dist}_M(m,M_E),\,\mathrm{dist}_M\left(m,m^\chi\right)\right\}\ge C\,k^{\epsilon-1/2}.
 $$
\end{cor}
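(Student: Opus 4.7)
The plan is to reduce to Theorem \ref{thm:rapid decrease} applied with $y=x$, after observing that, since $E$ is a regular value, $|f(m)-E|$ is globally comparable to $\mathrm{dist}_M(m, M_E)$. In the diagonal case $m_x=m_y=m$, the hypothesis of Theorem \ref{thm:rapid decrease} requires that at least one of $\mathrm{dist}_M(m, m^\chi)$ and $|f(m)-E|$ be of order $\ge C' k^{\epsilon-1/2}$, whereas the corollary's hypothesis is the analogous condition with $|f(m)-E|$ replaced by $\mathrm{dist}_M(m, M_E)$.

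The first step is to establish the existence of constants $c_1, c_2>0$ with
\[ c_1\,\mathrm{dist}_M(m, M_E)\le |f(m)-E|\le c_2\,\mathrm{dist}_M(m, M_E),\qquad m\in M. \]
The upper bound follows at once from the Lipschitz continuity of $f$ on the compact manifold $M$ (one may take $c_2=\mathrm{Lip}(f)$). For the lower bound, one uses that $M_E$ is a compact submanifold along which $\mathrm{d}f$ does not vanish: on a sufficiently small tubular neighborhood of $M_E$, the normal exponential together with a uniform positive lower bound on $\|\nabla f\|$ (over a slightly smaller closed tube) yields a linear comparison between $|f(m)-E|$ and the normal distance, hence between $|f(m)-E|$ and $\mathrm{dist}_M(m,M_E)$. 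Outside such a tube, $\mathrm{dist}_M(m,M_E)$ stays bounded above by $\mathrm{diam}(M)$, while $|f(m)-E|$ is bounded below by a positive constant (by compactness and continuity, since $f$ does not attain $E$ outside $M_E$), which gives a linear lower bound there as well. Taking the minimum of the two lower-bound constants provides a global $c_1>0$.

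The second step is to combine this comparison with Theorem \ref{thm:rapid decrease}. If $m$ satisfies
$$\max\big\{\mathrm{dist}_M(m, M_E),\,\mathrm{dist}_M(m, m^\chi)\big\}\ge Ck^{\epsilon-1/2},$$
then either $\mathrm{dist}_M(m, m^\chi)\ge C k^{\epsilon-1/2}$ directly, or else $\mathrm{dist}_M(m,M_E)\ge C k^{\epsilon-1/2}$, in which case the comparison gives $|f(m)-E|\ge c_1C\,k^{\epsilon-1/2}$. In both cases the hypothesis of Theorem \ref{thm:rapid decrease} is met at the pair $(x,x)\in X\times X$ for any $x\in X_m$, with constant $C''=\min\{C,c_1C\}$. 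Invoking that theorem yields $\mathcal{G}_k(x,x)=O\left(k^{-\infty}\right)$ uniformly in $m$, which by definition \eqref{eqn:defn of Gk diagonal} is exactly $\mathfrak{G}_k(m)=O\left(k^{-\infty}\right)$.

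No real obstacle arises; the only delicate point is ensuring the uniformity of the comparison constants $c_1, c_2$ across $M$, which is automatic from compactness combined with the regular-value assumption.
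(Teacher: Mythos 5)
Your argument is correct and is essentially the paper's own proof: both reduce to Theorem \ref{thm:rapid decrease} on the diagonal by showing, via the regular-value hypothesis, that $|f(m)-E|$ is bounded below by a constant multiple of $\mathrm{dist}_M(m,M_E)$. The only difference is cosmetic — you treat the region outside a tubular neighborhood of $M_E$ explicitly by compactness, a point the paper leaves implicit.
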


\begin{proof}
[Proof of Corollary \ref{cor:rapid decay diagonal}]
When $E$ is a regular value of $f$, any $m$ in a small tubular neighborhood of $M_E$
may be written $m=\exp_{m_0}\left(\lambda\,J_{m_0}\big(\upsilon_f(m_0)\big)\right)$, for unique
$m_0\in M_E$, $\lambda\in \mathbb{R}$. Here $\exp$ is of course the exponential map for the 
Riemannian manifold $(M,g)$. Clearly, for sufficiently small $\lambda$ one
has 
$a\,|\lambda|\le \mathrm{dist}_M(m,M_E)\le A\,|\lambda|$ and
$a\,|\lambda|\le |f(m)-E|\le A\,|\lambda|$, for suitable constants $0<a<A$; we may take, for example,
$a=\|\upsilon_f(m_0)\|^2/2$, $A=2\,\|\upsilon_f(m_0)\|^2$. It follows
 that $|f(m)-E|>(b/A)\,\mathrm{dist}_M(m,M_E)$, and this implies the statement.
\end{proof}

We want to estimate
\begin{eqnarray}
 \mathrm{trace}(\mathcal{G}_k)&=&\int_X \mathcal{G}_k(x,x)\,\mathrm{d}V_X(x)\\
 &=&\int_M\mathfrak{G}_k(m)\,\mathrm{d}V_M(m).\nonumber
\end{eqnarray}

Integration in $\mathrm{d}V_M(m)$, by Corollary \ref{cor:rapid decay diagonal},
asymptotically localizes to a neighborhood of $U_E$ of $M_E$ in $M$. Let us normalize the previous
parametrization by the map:
\begin{equation}
 \label{eqn:defn of eta}
 \eta:(m,\lambda)\in M_E\times (-\delta,\delta)\mapsto 
\exp_m\Big((\lambda/\|\upsilon_f(m)\|)\,J_m\big(\upsilon_f(m)\big)\Big)\in M.
\end{equation}

Let us define $\mathcal{V}\in \mathcal{C}^\infty\big(M_E\times (-\delta,\delta)\big)$ by setting
$$\mathcal{V}(m,\lambda)\,\mathrm{d}\lambda\,\mathrm{d}V_{M_E}=\eta^*(\mathrm{d}V_M\big),$$
where $\mathrm{d}V_{M_E}$ is the volume form of the oriented manifold $M_E$ (for our practical purposes,
we may as well deal with Riemannian densities); in particular, $\mathcal{V}(m,0)\equiv 1$
identically.
Thus
\begin{equation}
 \label{eqn:integration on M}
  \mathrm{trace}(\mathcal{G}_k)\sim\int_{M_E}\int_{-\delta}^{\delta}
  \mathfrak{G}_k\big(\eta(m,\lambda)\big)\,\mathcal{V}(m,\lambda)\,\mathrm{d}\lambda\,\mathrm{d}V_{M_E}(m).
\end{equation}

Furthermore, if $\varrho_0$ is as on page \pageref{page defn varrho_0} then, by Corollary \ref{cor:rapid decay diagonal}
and its proof, we can 
insert the cut-off $\varrho_0 \left(k^{1/2-\epsilon}\,\lambda\right)$ without affecting
the asymptotics:
\begin{equation}
 \label{eqn:integration on M 1}
  \mathrm{trace}(\mathcal{G}_k)\sim\int_{M_E}\int_{-\delta}^{\delta}
  \mathfrak{G}_k\big(\eta(m,\lambda)\big)\,\mathcal{V}(m,\lambda)\,\varrho_0 \left(k^{1/2-\epsilon}\,\lambda\right)\,
  \mathrm{d}\lambda\,\mathrm{d}V_{M_E}(m),
\end{equation}
so that integration in $\mathrm{d}\lambda$ is now over the interval $\left(-2\,k^{\epsilon-1/2},2\,k^{\epsilon-1/2}\right)$.

Here $\mathcal{G}_k=\mathcal{G}_k^{(\chi)}$. Let us define
$
\mathfrak{P}_k:\mathbb{R}\times M\rightarrow \mathbb{C}
$
by setting
\begin{equation}
 \label{eqn:defn mathfrak Pk}
 \mathfrak{P}_k(\tau,m)=:\Pi_k\big(x_\tau,x)
\end{equation}
for an arbitrary choice of $x\in X_m$. Then by (\ref{eqn:gutzwiller integral})
$$
\mathfrak{G}_k\big(\eta(m,\lambda)\big)=\int_{\mathbb{R}}\,
e^{-ik\tau E}\,\chi(\tau)\,\mathfrak{P}_k\big(\tau,\eta(m,\lambda)\big)\,\mathrm{d}\tau,
$$
so that given (\ref{eqn:integration on M}) we have
\begin{eqnarray}
 \label{eqn:integration on M 2}
  \lefteqn{\mathrm{trace}(\mathcal{G}_k)}\\
  &\sim&\int_{M_E}\int_{-\delta}^{\delta}
  \int_{\mathbb{R}}\,e^{-ik\tau E}\,\chi(\tau)\,\mathfrak{P}_k\big(\tau,\eta(m,\lambda)\big)
  \,\mathcal{V}(m,\lambda)\,\varrho_0 \left(k^{1/2-\epsilon}\,\lambda\right)\nonumber\\
  &&\cdot\mathrm{d}\tau\,
  \mathrm{d}\lambda\,\mathrm{d}V_{M_E}(m),\nonumber
\end{eqnarray}

In view of Proposition \ref{prop:hypersurface periods}, $\mathrm{Per}^\mathbb{R}_M(M_E)\cap \mathrm{supp}(\chi)$
is a finite set $\{\sigma_b\}_{b\in \mathcal{B}_0}$ ($\mathcal{B}_0$ is a finite subset of $\mathcal{B}$
in the discussion on page \pageref{pageperiods}). Let us choose $\epsilon_1$ such that 
\begin{equation}
  \label{eqn:choice of epsilon_1}
 \min\{|\sigma_l-\sigma_h|\,:l,h\in \mathcal{B}_0,\,h\neq l\}/5>\epsilon_1>0, 
\end{equation}
and set  
\begin{equation*}
\widetilde{\varrho}_0(\tau)=:\sum_{b\in \mathcal{B}_0}\varrho_0\big((\tau-\sigma_b)/\epsilon_1\big)
\,\,\,\,\,(\tau\in \mathbb{R}).
\end{equation*}
Then $\widetilde{\varrho}_0\in \mathcal{C}_0^\infty(\mathbb{R})$ is (compactly) supported in
$\bigcup_{b\in \mathcal{B}_0}(\sigma_b-2\epsilon_1,\sigma_b+2\epsilon_1)$,
and it is $\equiv 1$ in $\bigcup_{b\in \mathcal{B}_0}(\sigma_b-\epsilon_1,\sigma_b+\epsilon_1)$.

\begin{lem}
\label{lem:1st reduction}
Only a rapidly decreasing contribution to
the asymptotics of $\mathrm{trace}(\mathcal{G}_k)$ is lost, if the integrand in (\ref{eqn:integration on M 2})
is multiplied by $\widetilde{\varrho}_0(\tau)$. 
\end{lem}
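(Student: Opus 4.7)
The plan is to show that on the support of $(1-\widetilde{\varrho}_0)\chi$, the point $\phi^M_\tau\big(\eta(m,\lambda)\big)$ stays at a positive distance from $\eta(m,\lambda)$, so that by Proposition \ref{prop:rapid decrease} applied to $\mathfrak{P}_k\big(\tau,\eta(m,\lambda)\big)=\Pi_k(x_\tau,x)$ we get rapid decay pointwise and hence, upon integration over a compact domain, rapid decay of the cut-off contribution to the trace.

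First I would analyze the set $K=:\mathrm{supp}\big((1-\widetilde{\varrho}_0)\chi\big)\subseteq \mathbb{R}$. By construction of $\widetilde{\varrho}_0$, this is a compact set disjoint from the open neighborhood $\bigcup_{b\in \mathcal{B}_0}(\sigma_b-\epsilon_1,\sigma_b+\epsilon_1)$ of $\mathrm{Per}^\mathbb{R}_M(M_E)\cap \mathrm{supp}(\chi)$. I claim that there exists $c_0>0$ such that
$$
\mathrm{dist}_M\big(\phi^M_\tau(m),m\big)\ge c_0\qquad \forall\,(m,\tau)\in M_E\times K.
$$
If this failed, by compactness of $M_E\times K$ one could extract $(m_j,\tau_j)\to (m_0,\tau_0)\in M_E\times K$ with $\mathrm{dist}_M\big(\phi^M_{\tau_j}(m_j),m_j\big)\to 0$; then by continuity $\phi^M_{\tau_0}(m_0)=m_0$, forcing $\tau_0\in \mathrm{Per}^\mathbb{R}_M(M_E)\cap \mathrm{supp}(\chi)=\{\sigma_b\}_{b\in \mathcal{B}_0}$, which contradicts $\tau_0\in K$ in view of (\ref{eqn:choice of epsilon_1}).

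Next I would extend this lower bound to points of the form $\eta(m,\lambda)$ with $|\lambda|\le 2\,k^{\epsilon-1/2}$. Since $\phi^M_\tau$ depends smoothly on $\tau\in K$ and $K$ is compact, there is a uniform Lipschitz constant $L>0$ with
$$
\mathrm{dist}_M\big(\phi^M_\tau(m'),\phi^M_\tau(m'')\big)\le L\,\mathrm{dist}_M(m',m'')
$$
for all $\tau\in K$ and $m',m''\in M$. Since $\eta(m,\lambda)$ lies at distance $O(|\lambda|)=O\big(k^{\epsilon-1/2}\big)$ from $m\in M_E$, the triangle inequality yields
$$
\mathrm{dist}_M\Big(\phi^M_\tau\big(\eta(m,\lambda)\big),\eta(m,\lambda)\Big)\ge c_0-(L+1)\cdot O\big(k^{\epsilon-1/2}\big)\ge c_0/2
$$
uniformly for $(m,\tau)\in M_E\times K$ and $|\lambda|\le 2\,k^{\epsilon-1/2}$, provided $k$ is sufficiently large.

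Given this uniform lower bound, Proposition \ref{prop:rapid decrease} (applied to any $x\in X_{\eta(m,\lambda)}$ and $y=x_\tau$) gives constants $C_j>0$ with
$$
\Big|\mathfrak{P}_k\big(\tau,\eta(m,\lambda)\big)\Big|\le C_j\,k^{-j}\qquad (j=1,2,\ldots)
$$
uniformly on the relevant compact set. Since the domain of integration in (\ref{eqn:integration on M 2}), intersected with the support of $(1-\widetilde{\varrho}_0)(\tau)\,\chi(\tau)\,\varrho_0\left(k^{1/2-\epsilon}\lambda\right)$, has uniformly bounded measure, integrating the bound term-by-term shows that this contribution to $\mathrm{trace}(\mathcal{G}_k)$ is $O\left(k^{-\infty}\right)$. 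The main (mild) obstacle is the last step of transferring the lower bound from $M_E$ to its tubular neighborhood; the rest is a routine compactness/contradiction argument together with the standard off-diagonal decay of $\Pi_k$.
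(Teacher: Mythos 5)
Your proof is correct and takes essentially the same route as the paper's: a compactness/contradiction argument gives a uniform lower bound on $\mathrm{dist}_M(m_\tau,m)$ for $m\in M_E$ and $\tau\in\mathrm{supp}(\chi)$ away from the $\sigma_b$'s, which is then transferred to the shrinking tubular neighborhood by the triangle inequality and combined with Proposition \ref{prop:rapid decrease} before integrating over the (bounded) domain. The only cosmetic difference is that the paper exploits that $\phi^M_\tau$ is a Riemannian isometry (Lipschitz constant $1$), whereas you invoke a uniform Lipschitz bound over the compact $\tau$-range, which works just as well.
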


\begin{proof}
[Proof of Lemma \ref{lem:1st reduction}]
It suffices to show, in view of (\ref{eqn:integration on M 2}), (\ref{eqn:defn mathfrak Pk})
and Proposition \ref{prop:rapid decrease}, that there exists 
$\epsilon_2>0$ such that 
$
\mathrm{dist}_M\big(\eta(m,\lambda)_\tau,\eta(m,\lambda)\big)\ge
\epsilon_2
$
whenever $m\in M_E$, $|\lambda|\le 2\,k^{\epsilon-1/2}$, $\tau\in \mathrm{supp}(\chi)$, and $|\tau-\sigma_b|>\epsilon_1$
for each $b\in \mathcal{B}_0$.

Let us first show that there is $\epsilon_3>0$ such that 
$
\mathrm{dist}_M\big(m_\tau,m\big)\ge
\epsilon_3
$
whenever $m\in M_E$, $\tau\in \mathrm{supp}(\chi)$, and $|\tau-\sigma_b|>\epsilon_1$ for each $b\in \mathcal{B}_0$.

Indeed, if not we can find pairs $(m_h,\tau_h)\in M_E\times \mathrm{supp}(\chi)$ for $h=1,2,\ldots$,
such that $|\tau_h-\sigma_b|>\epsilon_1$ for each $b\in \mathcal{B}_0$ and $\mathrm{dist}_M(m_{h\,\tau_h},m_h)<1/h$.
By compactness of $M_E$ and $\mathrm{supp}(\chi)$, perhaps after passing to a subsequence we may assume
that $m_h\rightarrow m_\infty\in M_E$ and $\tau_h\rightarrow \tau_\infty\in \mathrm{supp}(\chi)$. 

By continuity, $m_{\infty\,\tau_\infty}=m_\infty$, 
whence $\tau_\infty\in \mathrm{Per}^\mathbb{R}_M(M_E)\cap \mathrm{supp}(\chi)$.
Thus $\tau_\infty=\sigma_b$ for some $b\in \mathcal{B}_0$; however this is clearly absurd since, again by continuity,
$|\tau_\infty-\sigma_b|\ge \epsilon_1$ for every $b\in \mathcal{B}_0$.

Given this, we have whenever $m\in M_E$, 
$|\lambda|\le 2\,k^{\epsilon-1/2}$, $\tau\in \mathrm{supp}(\chi)$, and $|\tau-\sigma_b|\ge \epsilon_1$
for each $b\in \mathcal{B}_0$:
\begin{eqnarray*}
\epsilon_3&\le& \mathrm{dist}_M(m,m_\tau)\\
&\le& \mathrm{dist}_M\big(m,\eta (m,\lambda)\big)+\mathrm{dist}_M\big(\eta (m,\lambda),
\eta (m,\lambda)_\tau\big)+\mathrm{dist}_M\big(\eta (m,\lambda)_\tau,
m_\tau\big),\nonumber\\
&=&2\,\mathrm{dist}_M\big(m,\eta (m,\lambda)\big)+\mathrm{dist}_M\big(\eta (m,\lambda),
\eta (m,\lambda)_\tau\big)\nonumber\\
&\le&5\,k^{\epsilon-1/2}+\mathrm{dist}_M\big(\eta(m,\lambda),
\eta(m,\lambda)_\tau\big).\nonumber
\end{eqnarray*}
Therefore, for $k\gg 0$ we have, say, 
$
\mathrm{dist}_M\big(\eta(m,\lambda),
\eta(m,\lambda)_\tau\big)\ge \epsilon_3/2
$.
\end{proof}

Let us set 
$\chi_b(\tau)=:\varrho_0\big((\tau-\sigma_b)/\epsilon_1\big)\cdot \chi(\tau)$.
Then by Lemma \ref{lem:1st reduction} we may rewrite
(\ref{eqn:integration on M 2}) as follows:

\begin{eqnarray}
 \label{eqn:trace computation3}
\lefteqn{
\sum_{j=1}^{N_k} \widehat{\chi}(kE-\lambda_{kj})}\\
&\sim&\sum_{b\in \mathcal{B}_0}\int_{M_E}\int_{-\delta}^{+\delta}\,\left[\int_{-\infty}^{+\infty}\,e^{-i\tau kE}\,\chi_b(\tau)\,
\mathfrak{P}_k\big(\tau,\eta(m,\lambda)\big)\,\mathrm{d}\tau\right]\nonumber\\
&&
\cdot \mathcal{V}(m,\lambda)\,\varrho_0\left (k^{1/2-\epsilon}\lambda\right)
\,\mathrm{d}\lambda\,\mathrm{d}V_{M_E}(m)\nonumber\\
&=& 
 \sum_{b\in \mathcal{B}_0}\int_{M_E}\int_{-\delta}^{+\delta}\,\mathfrak{G}_k^{(\chi_b)}\big(\eta(m,\lambda)\big)\cdot
 \mathcal{V}(m,\lambda)\,\varrho_0\left (k^{1/2-\epsilon}\lambda\right)
\,\mathrm{d}\lambda\,\mathrm{d}V_{M_E}(m),\nonumber\\
&\sim& \sum_{b\in \mathcal{B}_0}\mathrm{trace}\left(\mathcal{G}_k^{(\chi_b)}\right)\end{eqnarray}
where $\mathcal{G}_k^{(\chi_b)}$ is the Gutzwiller-T\"{o}plitz operator with $(f,E,\chi)$ replaced by $(f,E,\chi_b)$.

An argument similar to those used in the proof of Lemma \ref{lem:1st reduction} proves the following further
reduction.
Let $M_E(\sigma_b)=\bigcup_l M_E(\sigma_b)_l$ be as in (\ref{eqn:defn of MEsigma}).

\begin{lem}
 \label{lem:2nd reduction}
Only a rapidly decreasing contribution to the asymptotics is lost in (\ref{eqn:trace computation3}), if 
in the $b$-th summand integration in $\mathrm{d}V_{M_E}(m)$ 
is restricted to an arbitrarily small tubular neighborhood $U_E(\sigma_b)\subseteq M_E$
of $M_E(\sigma_b)$.
\end{lem}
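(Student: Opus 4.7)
The plan is to mimic closely the proof strategy of Lemma \ref{lem:1st reduction}: use compactness and continuity of the Hamiltonian flow to obtain a uniform lower bound on $\mathrm{dist}_M(m_\tau,m)$ when $m\in M_E$ lies outside a small tubular neighborhood of $M_E(\sigma_b)$ and $\tau\in\mathrm{supp}(\chi_b)$, then transfer this to the perturbed point $\eta(m,\lambda)$, and finally invoke Proposition \ref{prop:rapid decrease}.

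First I would fix a small open tubular neighborhood $U_E(\sigma_b)\subseteq M_E$ of $M_E(\sigma_b)=M_E\cap M(\sigma_b)$ and establish the following key claim: there exists $\epsilon_2=\epsilon_2(U_E(\sigma_b))>0$ such that
$$
\mathrm{dist}_M(m_\tau,m)\ge \epsilon_2
\qquad\text{for all } m\in M_E\setminus U_E(\sigma_b) \text{ and } \tau\in\mathrm{supp}(\chi_b).
$$
Suppose not; then we can extract sequences $m_h\in M_E\setminus U_E(\sigma_b)$ and $\tau_h\in\mathrm{supp}(\chi_b)$ with $\mathrm{dist}_M(m_{h,\tau_h},m_h)\to 0$. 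By compactness of $M_E\setminus U_E(\sigma_b)$ and of $\mathrm{supp}(\chi_b)$, we may assume $m_h\to m_\infty\in M_E\setminus U_E(\sigma_b)$ and $\tau_h\to\tau_\infty\in\mathrm{supp}(\chi_b)$. Continuity then yields $\phi^M_{\tau_\infty}(m_\infty)=m_\infty$, so $\tau_\infty\in\mathrm{Per}^\mathbb{R}_M(M_E)\cap\mathrm{supp}(\chi_b)$. Since $\mathrm{supp}(\chi_b)\subseteq(\sigma_b-2\epsilon_1,\sigma_b+2\epsilon_1)$ and the choice of $\epsilon_1$ in (\ref{eqn:choice of epsilon_1}) separates the $\sigma_c$'s, we get $\tau_\infty=\sigma_b$. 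But then $m_\infty\in M_E\cap M(\sigma_b)=M_E(\sigma_b)\subseteq U_E(\sigma_b)$, contradicting $m_\infty\notin U_E(\sigma_b)$.

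Next I would transfer the lower bound from $m$ to $\eta(m,\lambda)$. On the support of $\varrho_0\!\left(k^{1/2-\epsilon}\lambda\right)$ we have $|\lambda|\le 2k^{\epsilon-1/2}$, and since $\eta(m,0)=m$ with $\eta$ smooth, $\mathrm{dist}_M(m,\eta(m,\lambda))=O(k^{\epsilon-1/2})$ uniformly in $m\in M_E$. As $\phi^M_\tau$ is a Riemannian isometry, the triangle inequality gives
$$
\mathrm{dist}_M\!\big(\eta(m,\lambda)_\tau,\eta(m,\lambda)\big)\ge \mathrm{dist}_M(m_\tau,m)-2\,\mathrm{dist}_M(m,\eta(m,\lambda))\ge \epsilon_2/2
$$
for $k\gg 0$, uniformly for $m\in M_E\setminus U_E(\sigma_b)$, $|\lambda|\le 2k^{\epsilon-1/2}$ and $\tau\in\mathrm{supp}(\chi_b)$. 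Proposition \ref{prop:rapid decrease} (applied at $y=x\in X_{\eta(m,\lambda)}$ and $x_\tau$) then yields $\mathfrak{P}_k\big(\tau,\eta(m,\lambda)\big)=O(k^{-\infty})$ uniformly in this range.

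To conclude, I would plug this pointwise rapid decay into the integral representation of $\mathrm{trace}\left(\mathcal{G}_k^{(\chi_b)}\right)$ obtained from (\ref{eqn:integration on M 2}), noting that $\mathcal{V}$, $\varrho_0$ and $\chi_b$ are bounded and compactly supported, and that $M_E\setminus U_E(\sigma_b)$ and $\mathrm{supp}(\chi_b)$ have finite measure. Fubini then shows that the contribution to (\ref{eqn:trace computation3}) of the complement of $U_E(\sigma_b)$ is $O(k^{-\infty})$. The \emph{only nontrivial step} is the compactness argument producing $\epsilon_2$; everything else is a routine triangle inequality together with the rapid-decay property of the equivariant Szeg\H{o} kernel already recorded as Proposition \ref{prop:rapid decrease}.
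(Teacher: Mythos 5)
Your proof is correct and follows essentially the same route as the paper: a compactness--contradiction argument exploiting the discreteness of $\mathrm{Per}^\mathbb{R}_M(M_E)$ and the choice of $\epsilon_1$ in (\ref{eqn:choice of epsilon_1}), combined with the isometry/triangle-inequality transfer to the perturbed points $\eta(m,\lambda)$ and Proposition \ref{prop:rapid decrease}. The only cosmetic difference is that the paper runs the contradiction directly on sequences of perturbed points $\eta(m_k,\lambda_k)$ (using $|\lambda_k|\le 2k^{\epsilon-1/2}\to 0$), whereas you first isolate the uniform lower bound on the compact set $M_E\setminus U_E(\sigma_b)$ and then perturb, exactly as in the paper's proof of Lemma \ref{lem:1st reduction}.
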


\begin{proof}
 [Proof of Lemma \ref{lem:2nd reduction}]
 It suffices to prove that given $\delta_1>0$ there exists $\delta_2>0$ such that
 uniformly for $(m,\tau,\lambda)\in M_E\times \mathrm{supp}(\chi)\times \mathbb{R}$ satisfying:
 \begin{equation}
  \label{eqn:conditions on mtaulambda}
  |\tau-\sigma_b|\le 2\,\epsilon_1,\,\,
 \mathrm{dist}_M\big(m,M_E(\sigma_b)\big)\ge \delta_1,\,\,|\lambda|\le 2\,k^{\epsilon-1/2}
 \end{equation}
we have
 $\mathrm{dist}_M\big(\eta(m,\lambda),
\eta(m,\lambda)_\tau\big)\ge \delta_2$.
 
 Assume, to the contrary, that for $k=1,2,\ldots$ we can find 
 $$(m_k,\tau_k,\lambda_k)\in M_E\times \mathrm{supp}(\chi)\times
 \left(-2\,k^{\epsilon-1/2},2\,k^{\epsilon-1/2}\right)$$
 satisfying (\ref{eqn:conditions on mtaulambda}), and such that $\mathrm{dist}_M\big(\eta(m_k,\lambda_k),
\eta(m_k,\lambda_k)_{\tau_k}\big)\rightarrow 0$ when $k\rightarrow 0$.
Again by compactness, we may assume, perhaps after passing to a subsequence $k_j$, that 
$m_{k_j}\rightarrow m_\infty\in M_E$ and $\tau_{k_j}\rightarrow \tau_\infty\in [\sigma_b-2\epsilon_1,
\sigma_b+2\,\epsilon_1]$ when $j\rightarrow +\infty$.

It follows clearly that $\eta(m_{k_j},\lambda_{k_j})\rightarrow m_\infty$ and that
$m_{\infty\tau_\infty}=m_\infty$. Therefore, $\tau_\infty\in \mathrm{Per}^\mathbb{R}_M(M_E)\cap \mathrm{supp}(\chi)$.
Necessarily then we have $\tau_\infty=\sigma_b$, since by continuity $|\tau_k-\sigma_{b'}|\ge 3\,\epsilon_1$
for any $b'\in \mathcal{B}_0\setminus \{b\}$ (recall (\ref{eqn:choice of epsilon_1})). 
It thus follows that $m_\infty\in M_E(\sigma_b)$. But 
by continuity we also have $\mathrm{dist}_M\big(m_\infty,M_E(\sigma_b)\big)\ge \delta_1$,
and this is
absurd.
\end{proof}

We may assume that $U_E(\sigma_b)$
is the disjoint union of tubular neighborhoods $U_E(\sigma_b)_l\subseteq M_E$ of each
$M_E(\sigma_b)_l$. Thus, we can rewrite (\ref{eqn:trace computation3}) as follows: 
\begin{eqnarray}
 \label{eqn:trace computation5}
\lefteqn{
\sum_{j=1}^{N_k} \widehat{\chi}(kE-\lambda_{kj})}\\
&\sim&\sum_{b\in \mathcal{B}_0}\sum_{l=1}^{n_b}\int_{U_E(\sigma_b)_l}
\int_{-\delta}^{+\delta}\,\mathfrak{G}_k^{(\chi_b)}\big(\eta(m,\lambda)\big)\cdot \mathcal{V}(m,\lambda)
\,\varrho_0\left (k^{1/2-\epsilon}\lambda\right)
\,\mathrm{d}\lambda\,\mathrm{d}V_{M_E}(m).\nonumber\end{eqnarray}

As discussed in \S \ref{sctn:transverse intersections}, $M_E(\sigma_b)_l$
is a $\phi^M$-invariant submanifold of $M$, of dimension $2d_{bl}-1$.
In particular, $\upsilon_f(m)\in T_mM_E(\sigma_b)_l$ for each 
$m\in M_E(\sigma_b)_l$. Furthermore, the normal bundle $N\big(M_E(\sigma_b)_l/M_E)\big)$
of $M_E(\sigma_b)_l$ in $M_E$ is the restriction of the normal bundle of $M(\sigma_b)_l$
in $M$. In view of (\ref{eqn:tangent and normal}), at each $m\in M(\sigma_b)_l$ the normal space 
of $M_E(\sigma_b)_l$ in $M_E$ at $m$ is thus given by 
\begin{equation}
 \label{eqn:normal MEsigmaME}
 N_m\big(M_E(\sigma_b)_l/M_E)\big)=\mathrm{im}\left(\mathrm{d}\phi^M_{\sigma_b}-\mathrm{id}_{T_mM}\right).
\end{equation}

Hence we can find a finite open cover $\mathcal{U}=\{U_{blj}\}$ of
$M_E(\sigma_b)_l$ such that for each $j$ there are unitary
trivializations $N\big(M_E(\sigma_b)_l/M_E)\big)\cong U_{blj}\times \mathbb{C}^{c_{bl}}$.
Writing $\exp^{M_E}_m$ for the exponential map of $M_E$ at a point $m\in M_E(\sigma_b)_l$, we obtain
a local parametrization of $U_E(\sigma_b)_l\subseteq M_E$ along $U_{blj}$ by setting 
$$
\beta_{blj}:(m,\mathbf{n})\in U_{blj}\times B_{2c_{bl}}(\mathbf{0},\delta)\mapsto \exp^{M_E}_m(\mathbf{n})
\in U_E(\sigma_b)_l,
$$
where $\mathbf{n}\in B_{2c_{bl}}(\mathbf{0},\delta)\subseteq\mathbb{R}^{2c_{bl}}\cong \mathbb{C}^{c_{bl}}$
is identified with a normal vector through the previous unitary isomorphism.

Let $(\gamma_{blj})$ be a partition of unity on $M_E(\sigma_b)_l$ subordinate to the open cover
$\mathcal{U}$. We can then write $m=\exp^{M_E}_m(\mathbf{n})$ in (\ref{eqn:trace computation5}), and
express (\ref{eqn:trace computation5}) in the form
\begin{eqnarray}
 \label{eqn:partition of unity}
 \lefteqn{\sum_{j=1}^{N_k} \widehat{\chi}(kE-\lambda_{kj})}\\
 &\sim&\sum_{b\in \mathcal{B}_0}\sum_{l=1}^{n_b}\sum_j\int_{U_{blj}}\,\gamma_{blj}(m)\,
 \int_{B_{2c_{bl}}(\mathbf{0},\delta)}\,
\int_{-\delta}^{+\delta}\,\mathfrak{G}_k^{(\chi_b)}\left(\eta\left(\exp^{M_E}_m(\mathbf{n}),\lambda\right)\right)\nonumber\\
&&\cdot \mathcal{V}_1(m,\mathbf{n},\lambda)\,\varrho_0\left (k^{1/2-\epsilon}\lambda\right)
\,\mathrm{d}\lambda\,\mathrm{d}\mathbf{n}\,\mathrm{d}V_{M_E(\sigma_b)_l}(m),\nonumber
\end{eqnarray}
where $\mathcal{V}_1(\cdot,\mathbf{0},0)\equiv 1$, and $\mathrm{d}\mathbf{n}$ denotes the Lebesgue
measure on $\mathbb{R}^{2c_{bl}}$.
In view of (\ref{eqn:defn of eta}), we have
\begin{equation}
 \label{eqn:eta in argument}
 \eta\left(\exp^{M_E}_m(\mathbf{n}),\lambda\right)=
 \exp^M_{\exp^{M_E}_m(\mathbf{n})}\Big((\lambda/\|\upsilon_f(m)\|)\,J_m\big(\upsilon_f(m)\big)\Big)
\end{equation}
in (\ref{eqn:partition of unity}).
This is a parametrization 
$$\widetilde{\beta}_{blj}:U_{blj}\times B_{2c_{bl}}(\mathbf{0},\delta)\times
(-\delta,\delta)\rightarrow U_E(\sigma_b)_l;$$
we obtain another parametrization (perhaps changing $\delta$ and restricting
$U_E(\sigma_b)_l$),
$$\widehat{\beta}_{blj}:U_{blj}\times B_{2c_{bl}}(\mathbf{0},\delta)\times
(-\delta,\delta)\rightarrow U_E(\sigma_b)_l,$$
by using the unitary
trivialization of the normal bundle $N\big(M_E(\sigma_b)_l\big)$
to $M_E(\sigma_b)_l$ on $U_{blj}$, as in (\ref{eqn:normal space bl}), and setting instead:
\begin{equation}
 \label{eqn:eta in argument1}
\widehat{\beta}_{blj}:(m,\mathbf{n},\lambda)\mapsto 
 \exp_m^M\left(\frac{\lambda}{ \left\|\upsilon_f\left(m\right)\right\| }\,
J_{m}\big(\upsilon_f\left(m\right)\big)+\mathbf{n}\right).
\end{equation}
We have $\widehat{\beta}_{blj}^*(\mathrm{d}V_M)=
\widehat{\mathcal{V}}(m,\mathbf{n},\lambda)\,\mathrm{d}V_{M_E(\sigma_b)_l}
\,\mathrm{d}\mathbf{n}\,\mathrm{d}\lambda$ with $\mathcal{V}(\cdot,\mathbf{0},0)\equiv 1$.
Also, $\widetilde{\beta}_{blj}$ the same $\widehat{\beta}_{blj}$ induce the same differential
for $\mathbf{n}=\mathbf{0}$ and $\lambda=0$. Thus, if we apply the change of integration variables
$\widehat{\beta}_{blj}^{-1}\circ \widetilde{\beta}_{blj}$ we can reformulate (\ref{eqn:partition of unity})
in the following form:
\begin{eqnarray}
 \label{eqn:partition of unity1}
 \lefteqn{\sum_{j=1}^{N_k} \widehat{\chi}(kE-\lambda_{kj})}\\
 &\sim&\sum_{b\in \mathcal{B}_0}\sum_{l=1}^{n_b}\sum_j\int_{U_{blj}}\,\gamma_{blj}(m)\,
 \int_{B_{2c_{bl}}(\mathbf{0},\delta)}\,
\int_{-\delta}^{+\delta}\,\mathfrak{G}_k^{(\chi_b)}\left(\widehat{\beta}_{blj}(m,\mathbf{n},\lambda)\right)\nonumber\\
&&\cdot \mathcal{V}_2(m,\mathbf{n},\lambda)\,\varrho_0\left (k^{1/2-\epsilon}\lambda\right)
\,\mathrm{d}\lambda\,\mathrm{d}\mathbf{n}\,\mathrm{d}V_{M_E(\sigma_b)_l}(m),\nonumber
\end{eqnarray}
where again $\mathcal{V}_2(\cdot,\mathbf{0},0)\equiv 1$ ($\mathcal{V}_2$ also depends on $b,l,j$).

Perhaps after passing to a finer open cover, we may assume that the restriction to
$M_E(\sigma_b)_l$ of the tangent bundle $TM$ is unitarily trivialized
on each $U_{blj}$, and that this trivialization is consistent with the orthogonal direct sum 
decomposition in Definition \ref{defn:tvh components}. Paired with this trivialization,
$\exp_m$ provides for each $m\in U_{blj}$ a system of preferred local coordinates on $M$ centered
at $m$ (see (\ref{eqn:local parametrization})), 
for which $(\lambda/\|\upsilon_f(m)\|)\,J_m\big(\upsilon_f(m)\big)$ is a transverse tangent vector, 
of norm $|\lambda|$, and $\mathbf{n}$ is horizontal (see the discussion on page \pageref{page tubular neighborhood},
especially Remark \ref{rem:orthogonal decomposition}). It is however more convenient to use a slightly
different choice of preferred local coordinates centered at $m$, as follows.

Let us unitarily identify 
$\mathrm{span}\big(\upsilon_f(m)\big)$ and $\mathrm{span}\big(J_m\upsilon_f(m)\big)$ with $\mathbb{R}$
by $\tau\mapsto (\tau/\|\upsilon_f(m)\|)\,\upsilon_f(m)$, and 
$\lambda\mapsto (\lambda/\|\upsilon_f(m)\|)\,J_m\upsilon_f(m)$. We have also a smoothly varying unitary
isomorphism $S_m\cong \mathbb{R}^{2d-2}$, where $S_m$ is as in (\ref{eqn:RmSm}); recall that
$\mathrm{im}\left(\mathrm{d}\phi^M_{\sigma_b}-\mathrm{id}_{T_mM}\right)\subseteq S_m$, hence it corresponds to
a copy of $\mathbb{R}^{2c_{bl}}$ in $\mathbb{R}^{2d-2}$. 

Under these identifications,
we shall denote by $\mathbf{s}\in \mathbb{R}^{2d-2}$ the general element of
$S_m$, and by $\mathbf{n}\in \mathbb{R}^{2c_{bl}}\subseteq \mathbb{R}^{2d-2}$ 
the general element of $\mathrm{im}\left(\mathrm{d}\phi^M_{\sigma_b}-\mathrm{id}_{T_mM}\right)$.

We shall always adopt
additive notation, but set
\begin{eqnarray}
 \label{eqn:additive notation}
 m+(\tau,\lambda,\mathbf{s})
=:\phi^M_{-\tau/\|\upsilon_f(m)\|}\left(\exp_m^M\left(\frac{\lambda}{ \left\|\upsilon_f\left(m\right)\right\| }\,
J_{m}\big(\upsilon_f\left(m\right)\big)+\mathbf{s}\right)\right).
\end{eqnarray}
with 
$(\tau,\lambda,\mathbf{s})\in (-\delta,\delta)^2\times B_{2d-2}(\mathbf{0},\delta)\subseteq
\mathbb{R}\times\mathbb{R}^{2d-2}$.

Now let $\varrho_1\in \mathcal{C}^\infty_0\left(\mathbb{R}^{2c_{bl}}\right)$ satisfy
$\varrho_1(\mathbf{n})=0$ if $\|\mathbf{n}\|\ge 2$ and $\varrho_1(\mathbf{n})=1$ if $\|\mathbf{n}\|\le 1$.
Recall that $\chi_b$ in (\ref{eqn:partition of unity1}) is supported in $(\sigma_b-2\epsilon_1,\sigma_b+2\epsilon_1)$.

\begin{lem}
 \label{lem:reduction in mathbfn}
 Given that $\epsilon_1>0$ is sufficiently smal, 
only a negligible contribution to the asymptotics is lost in (\ref{eqn:partition of unity1}), if
 the integrand is multiplied by $\varrho_1\left(k^{1/2-\epsilon}\,\mathbf{n}\right)$.
\end{lem}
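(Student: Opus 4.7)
The plan is to invoke Theorem~\ref{thm:rapid decrease} with $\chi$ replaced by $\chi_b$ at the diagonal point $(x,x)$, where $x\in X$ projects to $m' = \widehat{\beta}_{blj}(m,\mathbf{n},\lambda)$. Since $|f(m') - E| = O(|\lambda|) = O(k^{\epsilon-1/2})$ is too small to activate the energy-distance alternative in that theorem, the whole task is to establish
$$
\mathrm{dist}_M\big(m',\phi^M_\tau(m')\big) \;\geq\; C'\,k^{\epsilon-1/2}
$$
uniformly in $\tau\in\mathrm{supp}(\chi_b)\subseteq[\sigma_b-2\epsilon_1,\sigma_b+2\epsilon_1]$ and in the range $\|\mathbf{n}\|\ge k^{\epsilon-1/2}$, $|\lambda|\le 2\,k^{\epsilon-1/2}$.

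The crucial geometric input is that $M(\sigma_b)_l$, being a connected component of the fixed-point set of the holomorphic isometry $\phi^M_{\sigma_b}$, is totally geodesic in $M$. I would therefore pass to Fermi coordinates $(y,z)$ along $M(\sigma_b)_l$: a point $p$ in a small tubular neighborhood is written $p=\exp^\nu_y(z)$, with $y\in M(\sigma_b)_l$ and $z\in N_yM(\sigma_b)_l$. Because $\phi^M_{\sigma_b+\tau'}$ is an isometry preserving $M(\sigma_b)_l$, its action is ``diagonal'' in these coordinates,
$$
\phi^M_{\sigma_b+\tau'}(y,z)=\bigl(\phi^M_{\tau'}(y),\,d\phi^M_{\sigma_b+\tau'}|_y\, z\bigr),
$$
the second component living in $N_{\phi^M_{\tau'}(y)}M(\sigma_b)_l$. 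Writing $m'\leftrightarrow (y_0,z_0)$, with $y_0\in M(\sigma_b)_l$ close to $m$ and $\|z_0-\mathbf{n}\|=O(|\lambda|\,\|\mathbf{n}\|+\|\mathbf{n}\|^2)$, and using that Fermi coordinates give the metric as an asymptotic orthogonal sum of base and fibre, one has
$$
\mathrm{dist}_M\big(m',\phi^M_{\sigma_b+\tau'}(m')\big)\;\ge\;\bigl\|z_0-P_{\tau'}\,d\phi^M_{\sigma_b+\tau'}|_{y_0}\,z_0\bigr\|\;-\;O\!\bigl(\|z_0\|^2\bigr),
$$
where $P_{\tau'}$ denotes parallel transport along $\tau'\mapsto \phi^M_{\tau'}(y_0)$.

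Setting $T_{\tau'}:=P_{\tau'}\circ d\phi^M_{\sigma_b+\tau'}|_{y_0}$, this is a smooth one-parameter family of unitary endomorphisms of $N_{y_0}M(\sigma_b)_l$ with $T_0=d\phi^M_{\sigma_b}|_{y_0}$. By (\ref{eqn:tangent and normal}) and unitarity, $T_0-\mathrm{id}$ is invertible on the normal fibre, and a compactness argument on $M_E(\sigma_b)_l$ yields a uniform constant $c>0$ with $\|(T_0-\mathrm{id})z\|\ge c\,\|z\|$ for all base points in a neighborhood of $M_E(\sigma_b)_l$. Continuity of $\tau'\mapsto T_{\tau'}$ then produces $\epsilon_0>0$, independent of $m$ and $y_0$, such that $\|(T_{\tau'}-\mathrm{id})z\|\ge (c/2)\,\|z\|$ for $|\tau'|\le\epsilon_0$. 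Choosing $\epsilon_1\le \epsilon_0/2$ from the outset, and $\delta$ small enough to absorb the $O(\|z_0\|^2)$ remainder into a factor $(c/4)\,\|\mathbf{n}\|$, I obtain $\mathrm{dist}_M(m',\phi^M_\tau(m'))\ge (c/4)\,\|\mathbf{n}\|\ge (c/4)\,k^{\epsilon-1/2}$ uniformly in $\tau\in\mathrm{supp}(\chi_b)$; Theorem~\ref{thm:rapid decrease} then yields $\mathfrak{G}_k^{(\chi_b)}(m')=O(k^{-\infty})$ uniformly on the locus where $\varrho_1(k^{1/2-\epsilon}\mathbf{n})\neq 1$, which gives the claim.

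The main obstacle, and the reason one must commit to Fermi rather than plain exponential coordinates at $m$, is controlling the normal component of the $O(\tau'^2)$ drift of the orbit $\tau'\mapsto\phi^M_{\tau'}(m)$: in generic exponential coordinates this drift contributes the second fundamental form $II_m(\upsilon_f(m),\upsilon_f(m))$, which would produce an $O(\epsilon_1^2)$ normal error dwarfing $c\,\|\mathbf{n}\|\sim c\,k^{\epsilon-1/2}$ as $k\to+\infty$. The total geodesy of $M(\sigma_b)_l$ kills this obstruction, but only the Fermi-coordinate setup makes the cancellation visible in a single line; the rest of the proof is then uniform bookkeeping of smooth remainders, exactly in the spirit of the arguments already given for Lemmas~\ref{lem:1st reduction} and \ref{lem:2nd reduction}.
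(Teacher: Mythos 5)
Your argument is correct in substance and reaches the right reduction (a uniform bound $\mathrm{dist}_M\big(m',(m')^{\chi_b}\big)\gtrsim\|\mathbf{n}\|\ge k^{\epsilon-1/2}$ followed by Theorem \ref{thm:rapid decrease}), but it follows a genuinely different route from the paper's. The paper never introduces Fermi coordinates along $M(\sigma_b)_l$: it works in the single chart (\ref{eqn:additive notation}) centered at the point $m\in M_E(\sigma_b)_l$ itself, obtained by composing the flow with the exponential map at $m$. In that chart both maps act \emph{exactly}, with no remainders: since $\phi^M_{\sigma_b}$ is a holomorphic isometry fixing $m$ and leaving $\upsilon_f$ invariant, it sends $m+(0,\lambda,\mathbf{n})$ to $m+(0,\lambda,A_b\mathbf{n})$ as in (\ref{eqn:azione di phi tau a}), and by the very construction of the chart $\phi^M_{-\tau}$ is exact translation in the first slot. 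The distance bound is then immediate from the elementary bi-Lipschitz estimate of Lemma \ref{lem:distance estimate local chart}, namely $\mathrm{dist}_M\ge \frac{1}{2}\,\big\|\big(\tau\,\|\upsilon_f(m)\|,A_b\mathbf{n}-\mathbf{n}\big)\big\|\ge C_1\,\|\mathbf{n}\|$, the constant coming from invertibility of $A_b-\mathrm{id}$ on the normal space and compactness of the component; no parallel transport, no continuity-in-$\tau'$ argument, and no metric comparison in a tube are needed, because the $\tau$-displacement lands in a coordinate slot orthogonal to the normal one. Your route (Fermi coordinates, total geodesy of the fixed-point set, the transported family $T_{\tau'}$, compactness) buys a more global, chart-independent picture and would survive in settings where one cannot center the chart at a fixed point of the isometry; the price is the comparison step, where your remainder $O(\|z_0\|^2)$ is optimistic: comparing the true distance with the transported fibre displacement across a base displacement of length $\sim\epsilon_1$ generically produces errors of size $O(\epsilon_1\,\|z_0\|)$ (holonomy and metric corrections in the tube), not merely quadratic ones. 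This is harmless for the lemma — its hypothesis lets you shrink $\epsilon_1$, exactly as you already do to control $T_{\tau'}$ — but the absorption should be charged to the choice of $\epsilon_1$ (and $\delta$), not to $\delta$ alone; also note that your closing remark is not quite fair to the alternative, since the paper's chart at the fixed point makes the same cancellation visible with literally no error terms.
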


\begin{proof}
[Proof of Lemma \ref{lem:reduction in mathbfn}]
Consider $m\in U_{blj}\subseteq M_E(\sigma_b)_l$, and adopt notation (\ref{eqn:additive notation}). 
Since $\phi_{\sigma_b}$ is a holomorphic Riemannian isometry, it preserves
geodesics. Furthermore, it leaves $\upsilon_f$ invariant. 
Given that $\phi^M_{\sigma_b}(m)=m$, we then have
\begin{eqnarray}
 \label{eqn:azione di phi tau a}
\phi^M_{-\sigma_b}\big( m+\left(0,\lambda,
\mathbf{n}\right)\big)=m+\left(0,\lambda,
A_b\mathbf{n}\right),
\end{eqnarray}
where $A_b$ is the unitary (orthogonal and symplectic) $(2 c_{bl})\times (2 c_{bl})$ matrix
representing the restriction to $N_m\big(M(\sigma_b)_l\big)\cong \mathbb{R}^{2c_{bl}}$ of
$\mathrm{d}_{m}\phi^M_{-\sigma_b}:T_mM\rightarrow T_mM$. 

Given (\ref{eqn:additive notation}), 
we then have that for $\tau\sim 0$
\begin{eqnarray}
 \label{eqn:azione di phi tau }
\phi^M_{-(\tau+\sigma_b)}\big( m+(0,\lambda,\mathbf{n})\big)&=&
\phi^M_{-\tau}\big(m+(0,\lambda,A_b\mathbf{n})\big)\\
&=&m+\big(\tau\,\|\upsilon_f(m)\|,\lambda,A_b\mathbf{n}\big).\nonumber
\end{eqnarray}

Since the local chart (\ref{eqn:additive notation})
is isometric at the origin,  by Lemma \ref{lem:distance estimate local chart} we have,
perhaps after passing to a smaller $\delta$,
\begin{equation}
 \label{eqn:distance comparison}
 \mathrm{dist}_M\left(m+(\tau,\lambda,\mathbf{s}),m+(\tau',\lambda',\mathbf{s}')\right)\ge \frac{1}{2}\,
 \left\|(\tau-\tau',\lambda-\lambda',\mathbf{s}-\mathbf{s}')\right\|
\end{equation}
for all 
$(\tau,\lambda,\mathbf{s}),\,(\tau',\lambda',\mathbf{s}')\in (-\delta,\delta)^2\times B_{2d-2}(\mathbf{0},\delta)$.
 
 Let us apply this with $\mathbf{s}=\mathbf{n}, \,\mathbf{s}'=A_b\mathbf{n}\in \mathbb{R}^{2c_{bl}}\subseteq 
 \mathbb{R}^{2d-2}$. Recalling that $A_b-I_{2c_{bl}}$ is invertible, 
 we have  that for $\tau\sim 0$
 \begin{eqnarray}
  \label{eqn:bound on distance action}
  \lefteqn{\mathrm{dist}_M\left(\phi^M_{-(\tau+\sigma_b)}\big( m+(0,\lambda,\mathbf{n})\big),
  m+(0,\lambda,\mathbf{n})\right)}\nonumber\\
  &=&\mathrm{dist}_M\left(m+\big(\tau\,\|\upsilon_f(m)\|,\lambda,A_b\mathbf{n}\big),
  m+(0,\lambda,\mathbf{n})\right)\nonumber\\
  &\ge&
  \frac{1}{2}\,\big\|\big(\tau\,\|\upsilon_f(m)\|,A_b\mathbf{n}-\mathbf{n})\big\|\ge C_1\,\|\mathbf{n}\|,
 \end{eqnarray}
where $C_1>0$ depends only on $E$, $b$ and $l$.

Hence if $\epsilon_1>0$ is sufficiently small, we deduce that
\begin{eqnarray}
 \label{eqn:bound on distance action 2}
 \mathrm{dist}_M\left(m+(0,\lambda,\mathbf{n}),
 \big(m+(0,\lambda,\mathbf{n})\big)^{(\chi_b)}\right)
 \ge C_1\,\|\mathbf{n}\|
\end{eqnarray}
for some constant $C_1>0$, and the statement follows from Theorem \ref{thm:rapid decrease}.
 
\end{proof}

With the rescaling $\lambda\mapsto \lambda/\sqrt{k}$ and $\mathbf{n}\mapsto \mathbf{n}/\sqrt{k}$,
we can then rewrite (\ref{eqn:partition of unity1}) as

\begin{eqnarray}
 \label{eqn:partition of unity2}
 \lefteqn{\sum_{j=1}^{N_k} \widehat{\chi}(kE-\lambda_{kj})}\\
 &\sim&\sum_{b\in \mathcal{B}_0}\sum_{l=1}^{n_b}k^{-c_{bl}-\frac{1}{2}}\,\sum_j\int_{U_{blj}}\,\gamma_{blj}(m)\,
 \int_{\mathbb{R}^{2c_{bl}}}\,
\int_{-\infty}^{+\infty}\,\mathfrak{G}_k^{(\chi_b)}\left(m+\left(0,\frac{\lambda}{\sqrt{k}},\frac{\mathbf{n}}{\sqrt{k}}\right)\right)
\nonumber\\
&&\cdot \mathcal{V}_2\left(m,\frac{\mathbf{n}}{\sqrt{k}},\frac{\lambda}{\sqrt{k}}\right)\,
\varrho_0\left (k^{-\epsilon}\lambda\right)\,\varrho_1\left(k^{-\epsilon}\,\mathbf{n}\right)
\,\mathrm{d}\lambda\,\mathrm{d}\mathbf{n}\,\mathrm{d}V_{M_E(\sigma_b)_l}(m)\nonumber\\
&=&\sum_{b\in \mathcal{B}_0}\sum_{l=1}^{n_b}\,\int_{M_E(\sigma_b)_l}\,\sum_j\gamma_{blj}(m)\,
 I_{lbj}(m,k)\,\mathrm{d}V_{M_E(\sigma_b)_l}(m),\nonumber
\end{eqnarray}
where we have set
\begin{eqnarray}
 \label{eqn:defnIlbjk}
  I_{lbj}(m,k)
 &=:&k^{-c_{bl}-\frac{1}{2}}\,\int_{\mathbb{R}^{2c_{bl}}}\,
\int_{-\infty}^{+\infty}\,\mathfrak{G}_k^{(\chi_b)}\left(m+\left(0,\frac{\lambda}{\sqrt{k}},\frac{\mathbf{n}}{\sqrt{k}}\right)\right)
\nonumber\\
&&\cdot \mathcal{V}_2\left(m,\frac{\mathbf{n}}{\sqrt{k}},\frac{\lambda}{\sqrt{k}}\right)\,
\varrho_0\left (k^{-\epsilon}\lambda\right)\,\varrho_1\left(k^{-\epsilon}\,\mathbf{n}\right)
\,\mathrm{d}\lambda\,\mathrm{d}\mathbf{n};
\end{eqnarray}
integration in $\mathrm{d}\lambda\,\mathrm{d}\mathbf{n}$ in (\ref{eqn:defnIlbjk})
is over an open ball centered at the origin
and of radius $O\left(k^\epsilon\right)$ in $\mathbb{R}^{1+2c_{bl}}$.
The dependence on $j$ of the right hand side of (\ref{eqn:defnIlbjk}) is implicit through the above choices of 
trivializations and HLC systems.

To find an asymptotic expansion for (\ref{eqn:defnIlbjk}), let us notice that for an arbitrary choice 
of $x\in X_m$ and of a preferred local frame of $A$ centered at
$x$, we obtain a system of HLC centered at $x$. Let
$\mathbf{w}\in T_mM$ be defined by $\mathbf{w}_\mathrm{t}=(\lambda/\|\upsilon_f(m)\|)\,J_m\upsilon_f(m)$, 
$\mathbf{w}_\mathrm{h}=\mathbf{n}$,
and $\mathbf{w}_\mathrm{v}=\mathbf{0}$.
Recalling (\ref{eqn:defn of Gk diagonal}), we have
\begin{eqnarray}
 \label{eqn:lift to HLC}
 \mathfrak{G}_k^{(\chi_b)}\left(m+\left(0,\frac{\lambda}{\sqrt{k}},\frac{\mathbf{n}}{\sqrt{k}}\right)\right)
 =\mathcal{G}_k^{(\chi_b)}\left(x+\frac{\mathbf{w}}{\sqrt{k}},x+\frac{\mathbf{w}}{\sqrt{k}}\right).
\end{eqnarray}

Theorem \ref{thm:scaling asymptotics} provides an asymptotic expansion for the right
hand side of
(\ref{eqn:lift to HLC}),
when we take $y=x$, $\theta_1=\theta_2=0$, $\mathbf{v}_1=\mathbf{v}_2=\mathbf{w}$,
$\tau_a=\sigma_b$ $\vartheta_a=\vartheta_{bl}$ ($\sigma_b$ is the only period in the support 
of $\chi_b$. 

By construction, we have $\chi_b(\sigma_b)=\chi(b)$ and 
\begin{eqnarray}
\label{eqn:defined exponent}
 \mathcal{Q}(A_a\mathbf{w},\mathbf{w})=\psi_2\big(A_b\mathbf{n},\mathbf{n})-2\,|\lambda|^2.
\end{eqnarray}

Given this, the asymptotic expansion of Theorem \ref{thm:scaling asymptotics} yields
\begin{eqnarray}
 \label{eqn:asymptotic expansion chib}
 \lefteqn{\mathcal{G}_k^{(\chi_b)}\left(x+\frac{\mathbf{w}}{\sqrt{k}},x+\frac{\mathbf{w}}{\sqrt{k}}\right)\sim
 \dfrac{\sqrt{2}}{\|\upsilon_f(m)\|}\,\left(\frac{k}{\pi}\right)^{d-1/2}}\\
 &&\cdot 
 e^{ik(\vartheta_{bl}-\sigma_b E)+\psi_2\big(A_b\mathbf{n},\mathbf{n})-2\,|\lambda|^2}\,\left[\chi(\sigma_b)
+\sum_{s=1}^{+\infty}k^{-s/2}\,P_{bs}(m;
\lambda,\mathbf{n})\right],\nonumber
\end{eqnarray}
where $P_{as}(m,\cdot)$ is a polynomial of degree $\le 3s$ and parity $s$.

Furthermore, if we Taylor expand $\mathcal{V}_2(m,\cdot,\cdot)$ at the origin, and recall that
$\mathcal{V}_1(m,\mathbf{0},0)=1$, we obtain an asymptotic expansion
\begin{equation}
 \label{eqn:taylor expand V2}
 \mathcal{V}_2\left(m,\frac{\mathbf{n}}{\sqrt{k}},\frac{\lambda}{\sqrt{k}}\right)\sim
 1+\sum_{r\ge 1}k^{-r/2}\,F_r(m;\lambda,\mathbf{n}),
\end{equation}
where $F_r$ is a homogeneous polynomial of degree $r$.
The step-$N$ remainder is bounded by $C_{N+1}\,k^{-(N+1)(1/2-\epsilon)}$. 

Multiplying (\ref{eqn:asymptotic expansion chib}) and (\ref{eqn:taylor expand V2}), we obtain for the product
of the first two factors in the integrand of (\ref{eqn:defnIlbjk}) 
an asymptotic expansion as in (\ref{eqn:asymptotic expansion chib}), but with the $P_{bs}$'s
($j\ge 1$)
replaced with polynomials $R_{bs}$'s with the same properties. Since integration is over a ball of
radius $O\left(k^\epsilon\right)$, the expansion can be integrated term by term.

On the other hand, the exponent in (\ref{eqn:defined exponent}) satisfies
\begin{equation}
 \label{eqn:bound real part}
 \Re \left(\psi_2\big(A_b\mathbf{n},\mathbf{n})-2\,|\lambda|^2\right)\le
 -c\,\left(\|\mathbf{n}\|^2+|\lambda|^2\right)
\end{equation}
for some constant $c>0$; therefore, only a rapidly decreasing contribution to the asymptotics is lost,
if the cut-off functions are omitted. Thus we obtain for $I_{lbj}(m,k)$ an asymptotic expansion of the 
form 
\begin{eqnarray}
 \label{eqn:defnIlbjk1}
  I_{lbj}(m,k)
 &\sim&\dfrac{\sqrt{2}}{\|\upsilon_f(m)\|}\,\left(\frac{k}{\pi}\right)^{d-1/2}\cdot k^{-c_{bl}-\frac{1}{2}}\,
 e^{ik(\vartheta_{bl}-\sigma_b E)}\nonumber\\
 &&\cdot\sum_{s=0}^{+\infty}\,k^{-s/2}\, I_{lbj}(m,s),
 \end{eqnarray}
 where we have set 
 \begin{eqnarray}
 \label{eqn:Ilbjs}
 I_{lbj}(m,s)=:\int_{\mathbb{R}^{2c_{bl}}}\,
\int_{-\infty}^{+\infty}\, 
 e^{\psi_2\big(A_b\mathbf{n},\mathbf{n})-2\,|\lambda|^2}\,
R_{bs}(m;
\lambda,\mathbf{n})
\,\mathrm{d}\lambda\,\mathrm{d}\mathbf{n},
\end{eqnarray}
with $R_{b0}(m;
\lambda,\mathbf{n})=:\chi(\sigma_b)$. Since $\psi_2\big(A_b\mathbf{n},\mathbf{n})-2\,|\lambda|^2$
is an even function of $(\mathbf{n},\lambda)$ and $R_{bs}(m;
\lambda,\mathbf{n})$ has the same parity as $s$, $I_{lbj}(m,s)=0$ if $s$ is odd.

Let us insert this in (\ref{eqn:partition of unity2}). We obtain
\begin{equation}
 \label{eqn:spectral asymptotics}
 \sum_{j=1}^{N_k} \widehat{\chi}(kE-\lambda_{kj})\sim \sum_{b\in \mathcal{B}_0}\sum_{l=1}^{n_b}\,\mathcal{I}(b,l,k)
\end{equation}
where each $\mathcal{I}(b,l,k)$ is an asymptotic expansion 
\begin{eqnarray}
 \label{eqn:spectral asymptotics summand}
 \lefteqn{\mathcal{I}(b,l,k)\sim\sqrt{2}\,\pi^{1/2-d}\,k^{d_{bl}-1}\,
 e^{ik(\vartheta_{bl}-\sigma_b E)}}\\
 &&\cdot\sum_{s=0}^{+\infty}\,k^{-s}\,\int_{M_E(\sigma_b)_l}\,\frac{1}{\|\upsilon_f(m)\|}\,\sum_j\gamma_{blj}(m)\, I_{lbj}(m,2s)
 \,\mathrm{d}V_{M_E(\sigma_b)_l}(m).\nonumber
\end{eqnarray}

Let us compute the leading order term in (\ref{eqn:spectral asymptotics summand}). We have
in view of (\ref{eqn:Ilbjs}) 
\begin{eqnarray}
 \label{eqn:Ilbj0}
 I_{lbj}(m,0)&=&\chi(\sigma_b)\,
 \left(\int_{\mathbb{R}^{2c_{bl}}}\,e^{\psi_2\big(A_b\mathbf{n},\mathbf{n})}\,\mathrm{d}\mathbf{n}\right)\cdot 
\left(\int_{-\infty}^{+\infty}\,  e^{-2\,|\lambda|^2}\,
\mathrm{d}\lambda\right)\\
&=&\frac{\chi(\sigma_b)}{\sqrt{2}}\cdot \frac{\pi^{c_{bl}+\frac{1}{2}}}{D(b,l)},
\end{eqnarray}
where $D(b,l)$ was defined on page \pageref{pageDeterminant}; we have made use of the computation on
page 235 of \cite{pao-trace}.

Thus the leading order term in (\ref{eqn:spectral asymptotics summand}) is
\begin{eqnarray}
 \label{eqn:leading order term1}
\frac{\chi(\sigma_b)}{D(b,l)}\,\left(\frac{k}{\pi}\right)^{d_{bl}-1}\,
 e^{ik(\vartheta_{bl}-\sigma_b E)}\cdot 
 \int_{M_E(\sigma_b)_l}\,\frac{1}{\|\upsilon_f(m)\|}
 \,\mathrm{d}V_{M_E(\sigma_b)_l}(m).
\end{eqnarray}

This completes the proof of Theorem \ref{thm:trace of G_k}.

\end{proof}

\end{document}